\newtheorem{observation}{Observation}
\newtheorem{lemma}{Lemma}
\newtheorem{theorem}{Theorem}
\newtheorem{conjecture}{Conjecture}
\newtheorem{corollary}{Corollary}
\newcommand{\N}{\mathbb N}
\newcommand{\Z}{\mathbb Z}
\newcommand{\Q}{\mathbb Q}
\newcommand{\K}{\textbf{\textit{K}}}
\newcommand{\LL}{\textbf{\textit{L}}}
\title{Is there a computable upper bound on the heights of rational
solutions of a Diophantine equation with a finite number of solutions?}
\author{
\IEEEauthorblockN{Apoloniusz Tyszka}
\IEEEauthorblockA{
University of Agriculture\\
Faculty of Production and Power Engineering\\
Balicka 116B, 30-149 Krak\'ow, Poland\\
Email: rttyszka@cyf-kr.edu.pl}}
\begin{document}
\date{}
\maketitle
\begin{abstract}
The height of a rational number \mbox{$\frac{p}{q}$} is denoted by \mbox{$h\left(\frac{p}{q}\right)$}
and equals \mbox{$\max(|p|,|q|)$} provided \mbox{$\frac{p}{q}$} is written in lowest terms.
The height of a rational tuple \mbox{$(x_1,\ldots,x_n)$} is denoted by \mbox{$h(x_1,\ldots,x_n)$}
and equals \mbox{$\max(h(x_1),\ldots,h(x_n))$}.
Let \mbox{$G_n=\{x_i+1=x_k: i,k \in \{1,\ldots,n\}\} \cup \{x_i \cdot x_j=x_k: i,j,k \in \{1,\ldots,n\}\}$}.
Let \mbox{$f(1)=1$}, and let \mbox{$f(n+1)=2^{\textstyle 2^{\textstyle f(n)}}$} for every positive integer~$n$.
We conjecture: {\tt (1)} if a system \mbox{${\cal S} \subseteq G_n$} has only finitely
many solutions in rationals \mbox{$x_1,\ldots,x_n$}, then each such solution \mbox{$(x_1,\ldots,x_n)$}
satisfies $h(x_1,\ldots,x_n) \leqslant
\left\{
\begin{array}{lcl}
1 &~& {\rm (if~} n=1{\rm )}\\
2^{\textstyle 2^{n-2}} &~& {\rm (if~} n \geqslant 2{\rm )}
\end{array}
\right.$;
{\tt (2)} if a system \mbox{${\cal S} \subseteq G_n$}
has only finitely many solutions in \mbox{non-negative} rationals \mbox{$x_1,\ldots,x_n$},
then each such solution \mbox{$(x_1,\ldots,x_n)$} satisfies \mbox{$h(x_1,\ldots,x_n) \leqslant f(2n)$}.
We prove: {\tt (1)} both conjectures imply that there exists an algorithm which takes as input a Diophantine equation,
returns an integer, and this integer is greater than the heights of rational solutions, if the solution set is finite;
{\tt (2)} both conjectures imply that the question whether or not a given Diophantine
equation has only finitely many rational solutions is decidable by a single query to an oracle
that decides whether or not a given Diophantine equation has a rational solution.
\end{abstract}
\begin{IEEEkeywords}
Diophantine equation which has only finitely many rational solutions, Hilbert's Tenth Problem for $\Q$,
relative decidability, upper bound on the heights of rational solutions.
\end{IEEEkeywords}
\section{{\bf Introduction}}
\IEEEoverridecommandlockouts
\IEEEPARstart{T}{he} height of a rational number \mbox{$\frac{p}{q}$} is denoted by \mbox{$h\left(\frac{p}{q}\right)$}
and equals \mbox{$\max(|p|,|q|)$} provided \mbox{$\frac{p}{q}$} is written in lowest terms.
The height of a rational tuple \mbox{$(x_1,\ldots,x_n)$} is denoted by \mbox{$h(x_1,\ldots,x_n)$}
and equals \mbox{$\max(h(x_1),\ldots,h(x_n))$}. We attempt to formulate a conjecture which implies
a positive answer to the following open problem:
\vskip 0.2truecm
\noindent
{\em Is there an algorithm which takes as input a Diophantine equation, returns an integer,
and this integer is greater than the heights of rational solutions, if the solution set is finite?}
\section{{\bf Conjecture~\ref{con1} and its equivalent form}}
\begin{observation}\label{obs0}
Only \mbox{$x_1=0$} and \mbox{$x_1=1$} solve the equation \mbox{$x_1 \cdot x_1=x_1$}
in integers (rationals, real numbers, complex numbers). For each integer \mbox{$n \geqslant 2$},
the following system
\begin{displaymath}
\left\{
\begin{array}{rcl}
x_1 \cdot x_1 &=& x_1 \\
x_1+1 &=& x_2 \\
x_1 \cdot x_2 &=& x_2 \\
\forall i \in \{2,\ldots,n-1\} ~x_i \cdot x_i &=& x_{i+1} {\rm ~(if~}n \geqslant 3{\rm)}
\end{array}
\right.
\end{displaymath}
\noindent
has exactly one integer (rational, real, complex) solution, namely
\mbox{$\left(1,2,4,16,256,\ldots,2^{\textstyle 2^{n-3}},2^{\textstyle 2^{n-2}}\right)$}.
\end{observation}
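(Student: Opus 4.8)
The plan is to prove the two assertions separately, each by completely elementary algebra, using the first to feed the second.

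For the first assertion I would rewrite the equation $x_1 \cdot x_1 = x_1$ as $x_1 \cdot (x_1 - 1) = 0$. Since each of $\mathbb{Z}$, $\mathbb{Q}$, $\mathbb{R}$, $\mathbb{C}$ has no zero divisors, this product vanishes only when $x_1 = 0$ or $x_1 = 1$; conversely both values visibly satisfy the equation. This disposes of the $n = 1$ part of the Observation and, more importantly, records the fact that every solution of the larger system has $x_1 \in \{0, 1\}$.

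For the second assertion, fix $n \geqslant 2$ and let $(x_1, \ldots, x_n)$ be any solution, in $\mathbb{Z}$, $\mathbb{Q}$, $\mathbb{R}$, or $\mathbb{C}$, of the displayed system. By the previous paragraph $x_1 \in \{0, 1\}$. If $x_1 = 0$, then $x_1 + 1 = x_2$ forces $x_2 = 1$ and then $x_1 \cdot x_2 = x_2$ becomes $0 = 1$, which is impossible; hence $x_1 = 1$. Now $x_1 + 1 = x_2$ gives $x_2 = 2$, while $x_1 \cdot x_2 = x_2$ holds automatically. If $n = 2$ the argument stops here, the unique solution being $(1, 2)$. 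If $n \geqslant 3$, the remaining equations are $x_{i+1} = x_i \cdot x_i = x_i^{2}$ for $i = 2, \ldots, n - 1$, which recursively determine $x_3, \ldots, x_n$ from $x_2 = 2$; a short induction on $i$ then gives the closed form $x_i = 2^{\textstyle 2^{i-2}}$ for $i \in \{2, \ldots, n\}$, the base case being $2 = 2^{\textstyle 2^{0}}$ and the inductive step using $\left(2^{\textstyle 2^{i-2}}\right)^{2} = 2^{\textstyle 2^{i-1}}$. Thus the only candidate solution is $\left(1, 2, 4, 16, 256, \ldots, 2^{\textstyle 2^{n-3}}, 2^{\textstyle 2^{n-2}}\right)$, and substituting this tuple back into each equation confirms that it is indeed a solution.

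I do not expect a genuine obstacle here: the system is triangular, so once $x_1$ is fixed each subsequent variable is pinned down by a single equation, and the possibility $x_1 = 0$ is eliminated immediately by the equation $x_1 \cdot x_2 = x_2$. The only points that need a little care are the bookkeeping of the exponent tower — checking that the offsets in $2^{\textstyle 2^{i-2}}$ are correct and that they match the tuple written in the statement — and the small cases $n = 2$ and $n = 3$, where the universally quantified block of equations is respectively empty or a single equation, so that the $\ldots$ in the displayed tuple has to be read accordingly.
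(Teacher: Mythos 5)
Your proof is correct and is exactly the elementary verification intended here: the paper states this as an Observation without proof, and your argument (factoring $x_1(x_1-1)=0$ in an integral domain, eliminating $x_1=0$ via $x_1\cdot x_2=x_2$, then inducting on the squaring chain to get $x_i=2^{\textstyle 2^{i-2}}$) is the standard route. No gaps.
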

\par
Let
\[
G_n=\{x_i+1=x_k:~i,k \in \{1,\ldots,n\}\} \cup
\]
\[
\{x_i \cdot x_j=x_k: i,j,k \in \{1,\ldots,n\}\}
\]
\begin{conjecture}\label{con1}
If a system \mbox{${\cal S} \subseteq G_n$} has only finitely many solutions in rationals \mbox{$x_1,\ldots,x_n$},
then each such solution \mbox{$(x_1,\ldots,x_n)$} satisfies
\[
h(x_1,\ldots,x_n) \leqslant
\left\{
\begin{array}{lcl}
1 &~& {\rm (if~} n=1{\rm )}\\
2^{\textstyle 2^{n-2}} &~& {\rm (if~} n \geqslant 2{\rm )}
\end{array}
\right.
\]
\end{conjecture}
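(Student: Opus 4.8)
The natural line of attack is induction on $n$. The cases $n=1$ and $n=2$ reduce to a finite verification over all subsystems of $G_n$: for $n=1$ the only subsystem with a nonempty finite solution set is $\{x_1\cdot x_1=x_1\}$, whose solutions $0,1$ have height $1$ (the empty system has infinitely many solutions, and the two subsystems containing $x_1+1=x_1$ have none), while for $n=2$ one checks the $2^{|G_2|}$ subsystems directly, the extremal one being the system of Observation~\ref{obs0} with unique solution $(1,2)$ realising $h=2^{2^0}$. For the inductive step one would like to ``peel off'' a variable: if $\mathcal{S}$ has finitely many but at least one solution then every variable occurs in some equation of $\mathcal{S}$, so for a chosen solution $(a_1,\dots,a_n)$ and a chosen index $m$ there is an equation of $\mathcal{S}$ that expresses $x_m$ through at most two other coordinates, and the height arithmetic is favourable: $h(xy)\le h(x)h(y)$, $h(x/y)\le h(x)h(y)$ (from $x_m\cdot x_j=x_k$ with $a_j\ne 0$), and $h(x\pm 1)\le 2h(x)$, so one extra coordinate costs at most a squaring, and $\bigl(2^{2^{n-3}}\bigr)^2=2^{2^{n-2}}$ would close the induction from the base value $2$ at $n=2$.

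Two features of the problem make this outline hard to carry out. First, substitution does not preserve the class of $G_n$-systems: replacing $x_m$ by, say, $x_i+1$ inside $x_m\cdot x_m=x_k$ produces $x_i^2+2x_i+1=x_k$, which is not built from $+1$ and products, so a direct elimination induction leaves the setting in which the statement is formulated — one would first need an equivalent reformulation (presumably the ``equivalent form'' announced by the section heading) that is closed under the reductions one wants to perform, perhaps phrased over $\Z$ or in terms of a combinatorial invariant of the occurrence graph of $\mathcal{S}$. Second, ``finitely many \emph{rational} solutions'' is not an algebraic-geometric condition: the complex variety of $\mathcal{S}$ may be positive-dimensional while having only finitely many rational points, so off-the-shelf height bounds from elimination theory or the arithmetic Nullstellensatz do not apply, and in any case the generic such bounds would be far weaker — iterated exponentials in $|G_n|\sim n^3$, hence triple-exponential in $n$ — than the sharp double-exponential asked for.

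Accordingly, the real obstacle — and the reason the statement is only conjectured — is to show, uniformly over all $\mathcal{S}\subseteq G_n$ with finite solution set, that finiteness alone forces every coordinate of every solution to be ``derivable'' from seeds of height at most $2$ by a chain of at most $n-2$ product, division, or shift steps. A variable may be pinned down jointly by several equations while no single equation isolates a coordinate whose removal preserves finiteness; the occurrence graph may contain directed cycles, so there is no topological order in which to process the variables; and extracting a coordinate from a product equation forces case splits on vanishing. On the other side, I see no mechanism for manufacturing a one-parameter family of rational solutions out of a single ``over-tall'' solution, which is what one would need in order to contradict finiteness. Since Observation~\ref{obs0} shows the bound $2^{2^{n-2}}$ is attained, any complete proof must track these heights with no slack whatsoever.
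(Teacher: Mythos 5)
The statement you were asked about is Conjecture~\ref{con1}: the paper does not prove it, and indeed the whole point of the paper is to derive consequences \emph{assuming} it. The only things the paper establishes about the conjecture itself are Observation~\ref{obs0} (the bound is attained, hence cannot be lowered), an equivalent restatement which is just the contrapositive phrased as ``every over-tall tuple extends to a taller tuple satisfying the same $+1$ and $\cdot$ relations,'' and Theorem~\ref{new2}, which recasts the conjecture as a $\Pi^0_2$ statement via an infinite search. So there is no proof in the paper for you to have matched or diverged from, and your text, quite properly, does not claim to be one either: it is an analysis of why the obvious attack fails.

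That analysis is essentially sound. Your base case $n=1$ is a correct finite verification; $n=2$ is plausibly finite but you have not actually carried out the $2^{12}$ checks. The height inequalities you invoke ($h(xy)\leqslant h(x)h(y)$, $h(x+1)\leqslant 2h(x)$) are correct, and the arithmetic $\bigl(2^{\textstyle 2^{n-3}}\bigr)^2=2^{\textstyle 2^{n-2}}$ does show why the doubly exponential shape is the natural target for a one-variable-per-step elimination. You also correctly identify the two fatal obstructions: elimination does not stay inside the class of $G_n$-systems, and finiteness of the \emph{rational} solution set is not preserved or even detectable by the algebraic-geometric tools that produce effective height bounds (the paper itself notes, via Faltings, that no such bounds are known even for curves). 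One small correction: the ``equivalent form'' announced in the section heading is not a reformulation closed under your reductions; it is merely the contrapositive. The bottom line is that no proof exists here --- neither yours nor the paper's --- and your proposal should be read as a (largely accurate) account of why the problem is open, not as a gap-free argument.
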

\par
Observation \ref{obs0} implies that the bound
\[
\left\{
\begin{array}{lcl}
1 && {\rm (if~} n=1{\rm )}\\
2^{\textstyle 2^{n-2}} && {\rm (if~} n \geqslant 2{\rm )}
\end{array}
\right.
\]
cannot be decreased.
\vskip 0.2truecm
\par
Conjecture~\ref{con1} is equivalent to the following conjecture on rational arithmetic:
if rational numbers \mbox{$x_1,\ldots,x_n$} satisfy
\[
h(x_1,\ldots,x_n)>\left\{
\begin{array}{lcl}
1 &~& {\rm (if~} n=1{\rm )}\\
2^{\textstyle 2^{n-2}} &~& {\rm (if~} n \geqslant 2{\rm )}
\end{array}
\right.
\]
then there exist rational numbers \mbox{$y_1,\ldots,y_n$} such that
\[
h(x_1,\ldots,x_n)<h(y_1,\ldots,y_n)
\]
and for every \mbox{$i,j,k \in \{1,\ldots,n\}$}
\[
(x_i+1=x_k \Longrightarrow y_i+1=y_k) ~\wedge (x_i \cdot x_j=x_k \Longrightarrow y_i \cdot y_j=y_k)
\]
\begin{theorem}\label{new2}
Conjecture~\ref{con1} is true if and only if the execution of Flowchart 1 prints infinitely many numbers.
\end{theorem}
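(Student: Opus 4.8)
The plan is to let Flowchart~1 run an exhaustive search whose success is governed precisely by the arithmetical reformulation of Conjecture~\ref{con1} stated just above, and then to match the two cases. Write $\beta(1)=1$ and $\beta(n)=2^{\textstyle 2^{n-2}}$ for $n\geqslant 2$ for the bound in the conjecture. I expect Flowchart~1 to enumerate, in a fixed computable order, all pairs $\bigl(n,(x_1,\ldots,x_n)\bigr)$ in which $x_1,\ldots,x_n$ are rationals with $h(x_1,\ldots,x_n)>\beta(n)$; for the current pair it dovetails a search over all rational $n$-tuples $(y_1,\ldots,y_n)$, seeking one with $h(y_1,\ldots,y_n)>h(x_1,\ldots,x_n)$ and with $(x_i+1=x_k\Rightarrow y_i+1=y_k)$ and $(x_i\cdot x_j=x_k\Rightarrow y_i\cdot y_j=y_k)$ for all $i,j,k\in\{1,\ldots,n\}$. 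On finding such a tuple it prints a number (say $h(y_1,\ldots,y_n)$) and advances to the next pair; each such search either halts with a witness or runs forever, since the finitely many conditions to check are decidable.

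For the forward implication, suppose Conjecture~\ref{con1} holds. By the equivalence recorded above, every rational tuple of height exceeding $\beta(n)$ admits a height-increasing tuple preserving all relations of $G_n$ it satisfies, so every search inside Flowchart~1 halts and emits a number. For each $n\geqslant 1$ there are infinitely many rational $n$-tuples of height exceeding $\beta(n)$ (e.g.\ already among $1$-tuples, replacing $p/q$ in lowest terms by $p^2/q^2$ strictly increases the height), so the enumeration of pairs is infinite, the execution never gets stuck, and it prints infinitely many numbers; ordering the pairs so that $h(x_1,\ldots,x_n)\to\infty$ along the enumeration even makes the printed values grow without bound, so the conclusion is robust to how ``infinitely many numbers'' is interpreted.

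For the converse I argue by contraposition. If Conjecture~\ref{con1} fails, the equivalent form yields a pair $\bigl(n,(x_1,\ldots,x_n)\bigr)$ with $h(x_1,\ldots,x_n)>\beta(n)$ for which no rational $n$-tuple of strictly larger height preserves all satisfied relations of $G_n$; take the first such pair in the enumeration. Flowchart~1 prints one number for each of the finitely many earlier pairs (each of which admits a witness, by minimality) and then enters a search that never halts, so the execution prints only finitely many numbers. This proves the theorem.

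There is no genuine obstacle here; the work is in fixing Flowchart~1 so that the two directions line up. The delicate points are: (i) the inner search must be a truly unbounded dovetailed search over all rational $n$-tuples, so that ``no witness exists'' becomes ``the search loops forever''; (ii) the outer enumeration must be infinite and must actually reach the hypothetical bad pair, so that a single failing search blocks all later output; and (iii) the elementary fact that $\beta(n)$ is exceeded by infinitely many rational $n$-tuples for every $n$, which is what keeps the ``yes'' case honestly infinite. Granting these design features for Flowchart~1, Theorem~\ref{new2} follows immediately from the equivalent form of Conjecture~\ref{con1}.
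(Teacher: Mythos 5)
Your proposal is correct and matches the paper's approach: the paper's one-line proof says only that Flowchart~1 enumerates all rational tuples via a computable surjection $\eta\colon\Gamma_3\to\bigcup_{n=1}^\infty\Q^n$ and tests the equivalent $\forall\exists$ form of Conjecture~\ref{con1}, which is exactly the dovetailed search-and-print scheme you describe. You have simply filled in the details (termination of each inner search under the conjecture, infinitude of tuples exceeding the bound, and the contrapositive via a first stuck search) that the paper leaves implicit.
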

\begin{center}
\includegraphics[width=\hsize]{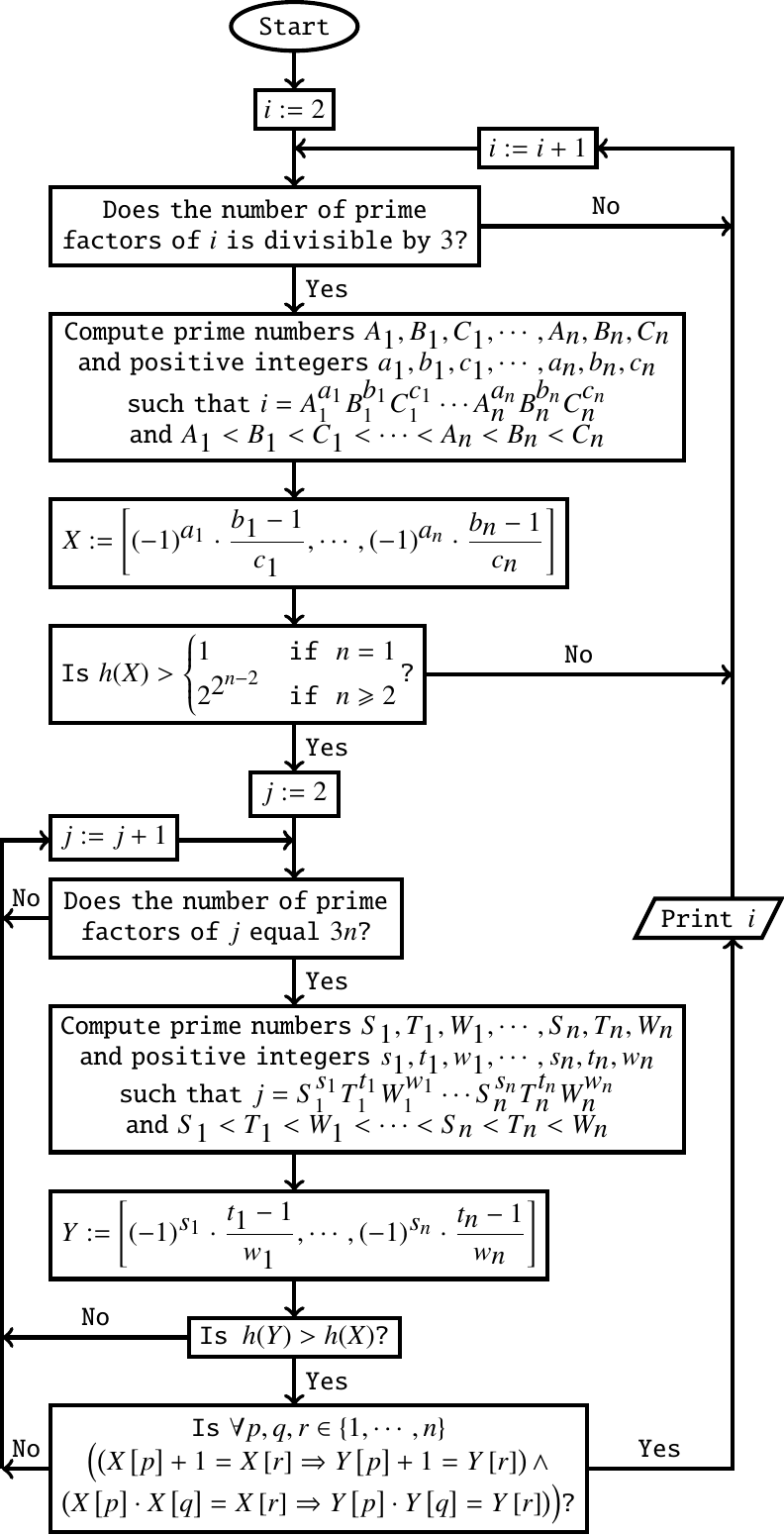}
\end{center}
\vskip 0.01truecm
\centerline{{\it Flowchart 1: An infinite-time computation which}}
\vskip 0.01truecm
\centerline{{\it decides whether or not Conjecture~\ref{con1} is true}}
\begin{proof}
Let $\Gamma_3$ denote the set of all integers \mbox{$i \geqslant 2$} whose number of prime factors is
divisible by $3$. The claimed equivalence is true because the algorithm from Flowchart 1
applies a surjective function \mbox{$\eta \colon \Gamma_3 \to \bigcup_{n=1}^\infty\limits {\Q}^n$}.
\end{proof}
\begin{corollary}
Conjecture~\ref{con1} can be written in the form
\mbox{$\forall x \in \N ~\exists y \in \N ~\phi(x,y)$},
where \mbox{$\phi(x,y)$} is a computable predicate.
\end{corollary}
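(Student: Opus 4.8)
The plan is to combine Theorem~\ref{new2} with a routine arithmetization of the statement ``the execution of Flowchart~1 prints infinitely many numbers.'' By Theorem~\ref{new2}, Conjecture~\ref{con1} is true if and only if Flowchart~1 prints infinitely many numbers, so it suffices to express this last condition in the shape $\forall x \in \N ~\exists y \in \N ~\phi(x,y)$ with $\phi$ a computable predicate, and then invoke the Theorem.

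First I would fix a concrete discrete notion of ``step'' for the execution of Flowchart~1 (for instance, one traversal of an edge of the flowchart), so that the run is a sequence of configurations $C_0, C_1, C_2, \ldots$ in which each $C_{t+1}$ is obtained from $C_t$ by an ordinary, effectively computable transition, and ``a number is printed at step $t$'' is a decidable property of $(C_t,C_{t+1})$. Although Flowchart~1 is an infinite-time computation, every finite truncation of it is an entirely classical finite computation; hence the function $P(y)$ giving the number of outputs produced during the first $y$ steps is total computable and non-decreasing in $y$, and the total number of printed numbers equals $\sup_{y}P(y)$ (with value $\infty$ allowed).

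Then I would set $\phi(x,y)$ to be ``$P(y)>x$'', i.e.\ ``simulating Flowchart~1 for $y$ steps produces more than $x$ printed numbers''; this is computable, since one simply runs the bounded simulation and counts. The key step is the equivalence: Flowchart~1 prints infinitely many numbers $\iff \forall x \in \N ~\exists y \in \N ~\phi(x,y)$. For the forward implication, if infinitely many numbers are printed then for each $x$ the $(x+1)$-st printed number appears after finitely many steps, so some $y$ satisfies $P(y)>x$. For the converse, if for every $x$ there is a $y$ with $P(y)>x$, then $\sup_y P(y)=\infty$, so infinitely many numbers are printed. Chaining this with Theorem~\ref{new2} yields Conjecture~\ref{con1} $\iff \forall x \in \N ~\exists y \in \N ~\phi(x,y)$, which is the asserted form, and the Corollary follows.

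The only point needing care — rather than a real obstacle — is the first step: one must choose the step-counting convention so that each finite prefix of the infinite-time computation is genuinely a finite computation and so that the running output count is a total computable, non-decreasing function. Once that bookkeeping is in place, the computability of $\phi$ and the two implications above are immediate.
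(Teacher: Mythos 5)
Your argument is correct and is exactly the route the paper intends: the Corollary is stated as an immediate consequence of Theorem~\ref{new2}, and the standard arithmetization you give (taking $\phi(x,y)$ to say that a $y$-step simulation of Flowchart~1 has printed more than $x$ numbers, so that ``prints infinitely many numbers'' becomes $\forall x\,\exists y\,\phi(x,y)$) is precisely the implicit justification. The paper supplies no further detail, so your write-up simply makes the intended proof explicit.
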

\section{{\bf Algebraic lemmas -- part 1}}
Let ${\cal R}$ denote the class of all rings, and let \mbox{$\cal{R}${\sl ng}} denote the class of all rings $\K$
that extend $\Z$. Let
\[
E_n=\{1=x_k: k \in \{1,\ldots,n\}\} \cup
\]
\[
\{x_i+x_j=x_k:~i,j,k \in \{1,\ldots,n\}\} \cup
\]
\[
\{x_i \cdot x_j=x_k:~i,j,k \in \{1,\ldots,n\}\}
\]
\begin{lemma}\label{lem1} (\cite[p.~720]{Tyszka})
Let \mbox{$D(x_1,\ldots,x_p) \in {\Z}[x_1,\ldots,x_p]$}.
Assume that \mbox{$d_i={\rm deg}(D,x_i) \geqslant 1$} for each \mbox{$i \in \{1,\ldots,p\}$}. We can compute a positive
integer \mbox{$n>p$} and a system \mbox{$T \subseteq E_n$} which satisfies the following three conditions:
\vskip 0.2truecm
\noindent
{\tt Condition 1.} If \mbox{$\K \in {\cal R}{\sl ng} \cup \{\N,\N \setminus \{0\}\}$}, then
\[
\forall \tilde{x}_1,\ldots,\tilde{x}_p \in \K ~\Bigl(D(\tilde{x}_1,\ldots,\tilde{x}_p)=0 \Longleftrightarrow
\]
\[
\exists \tilde{x}_{p+1},\ldots,\tilde{x}_n \in \K ~(\tilde{x}_1,\ldots,\tilde{x}_p,\tilde{x}_{p+1},\ldots,\tilde{x}_n) ~solves~ T\Bigr)
\]
{\tt Condition 2.} If \mbox{$\K \in {\cal R}{\sl ng} \cup \{\N,\N \setminus \{0\}\}$}, then
for each \mbox{$\tilde{x}_1,\ldots,\tilde{x}_p \in \K$} with \mbox{$D(\tilde{x}_1,\ldots,\tilde{x}_p)=0$},
there exists a unique tuple \mbox{$(\tilde{x}_{p+1},\ldots,\tilde{x}_n) \in {\K}^{n-p}$} such that the tuple
\mbox{$(\tilde{x}_1,\ldots,\tilde{x}_p,\tilde{x}_{p+1},\ldots,\tilde{x}_n)$} solves $T$.
\vskip 0.2truecm
\noindent
{\tt Condition 3.} If $M$ denotes the maximum of the absolute values of the
coefficients of \mbox{$D(x_1,\ldots,x_p)$}, then
\[
n=(M+2)^{\textstyle (d_1+1) \cdot \ldots \cdot (d_p+1)}-1
\]
Conditions 1 and 2 imply that for each \mbox{$\K \in {\cal R}{\sl ng} \cup \{\N,\N \setminus \{0\}\}$},
the equation \mbox{$D(x_1,\ldots,x_p)=0$} and the system $T$ have the same number of solutions in $\K$.
\end{lemma}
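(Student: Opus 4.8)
The plan is to realize the equation $D=0$ as the input/output specification of a ``straight-line computation'' built only from $+$, $\cdot$ and the constant $1$, and to attach one variable to each intermediate result; then every auxiliary variable becomes a fixed polynomial function of $x_1,\dots,x_p$, which yields Conditions~1 and~2 at once. First I would write $D=D_1-D_2$ with $D_1,D_2\in\N[x_1,\dots,x_p]$ ($D_1$ collecting the monomials of $D$ with positive coefficient, $D_2$ the negatives of the others), so that $D=0$ is equivalent to $D_1+1=D_2+1$. Both $D_1+1$ and $D_2+1$ are nonzero polynomials whose monomials lie among the $N:=(d_1+1)\cdots(d_p+1)$ monomials $x_1^{e_1}\cdots x_p^{e_p}$ with $0\le e_i\le d_i$ (one of these being the monomial $1$), and whose coefficients lie in $\{0,1,\dots,M+1\}$; the extra value $M+1$, needed to absorb the ``$+1$'', is exactly what produces $M+2$ rather than $M+1$ below.

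Next I would introduce, for every \emph{nonzero} function $g$ from this set of $N$ monomials to $\{0,1,\dots,M+1\}$, a variable $x_g$ whose intended value is $\sum_j g(\mu_j)\,\mu_j(\tilde x_1,\dots,\tilde x_p)$. There are exactly $(M+2)^N-1$ such $g$, which is the claimed $n$ (Condition~3); moreover $N\ge 2^p$ because each $d_i\ge 1$, so $n>p$, and the hypothesis $d_i\ge 1$ is also what guarantees that each $x_i$ is itself one of the listed monomials. After relabelling so that $x_1,\dots,x_p$ are the variables attached to the indicator functions $\mathbf 1_{x_1},\dots,\mathbf 1_{x_p}$, I would let $T$ consist of: the equation $1=x_{g_0}$, where $g_0$ is the indicator of the monomial $1$; all equations $x_i\cdot x_{\mathbf 1_{\mu'}}=x_{\mathbf 1_{\mu}}$ for listed monomials with $\mu=x_i\cdot\mu'$ and $\deg\mu\ge 2$; all equations $x_g+x_{\mathbf 1_\mu}=x_{g+\mathbf 1_\mu}$ for which $g$ and $g+\mathbf 1_\mu$ are nonzero functions into $\{0,\dots,M+1\}$; and finally the single equation $x_{g_1}\cdot x_{g_0}=x_{g_2}$, where $g_1,g_2$ encode $D_1+1$ and $D_2+1$. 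Every equation of $T$ belongs to $E_n$; all of them except the last are polynomial identities in $\tilde x_1,\dots,\tilde x_p$, while the last one holds precisely when $(D_1+1)(\tilde x)=(D_2+1)(\tilde x)$, i.e. when $D(\tilde x)=0$.

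Conditions~1 and~2 would then follow from one induction. Fix $\tilde x_1,\dots,\tilde x_p\in\K$; ordering the functions $g$ by the quantity $\sum_j g(\mu_j)(1+\deg\mu_j)$, I would show that the equation $1=x_{g_0}$, the monomial-multiplication equations and the sum equations force on each $x_g$ a unique value, namely its intended value. The base cases are $g_0$ (value $1$) and the $\mathbf 1_{x_i}$ (the free variables $x_i$); a higher monomial indicator $\mathbf 1_\mu$ is forced through a multiplication $\mu=x_i\cdot\mu'$, and a non-indicator $g$ is forced through an addition $g=g'+\mathbf 1_\mu$ with $g'$ nonzero, both reductions strictly decreasing the ordering quantity. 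Granting this, the solutions of $T$ over $\K$ are exactly the tuples whose first $p$ coordinates form a zero of $D$ and whose remaining coordinates are the forced values --- this is Condition~2 --- and the corresponding biconditional is Condition~1. Since $T$ does not depend on $\K$, the two solution sets are in bijection over every $\K$ in the class considered, hence equinumerous.

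The main obstacle is the reachability fact underlying the induction: I must check that \emph{every} one of the $(M+2)^N-1$ variables --- including the many encoding polynomials that never occur in $D$ --- really is pinned down by equations of the three allowed shapes. Reachability by additions is clear, since any $g$ is the coefficientwise sum of monomial indicators and every partial sum still takes values $\le M+1$ (this is where the extra coefficient value gets used, e.g. to accommodate $D_1+1$); reachability of each monomial indicator by multiplications holds because every sub-monomial of a listed monomial is again listed (this is where the bounds $d_i$ are used). The rest --- making the relabelling explicit so that the original variables are precisely $x_1,\dots,x_p$, confirming that the count is exactly $(M+2)^N-1$, and observing that the whole construction is effective --- is routine bookkeeping.
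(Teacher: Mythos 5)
Your proof is correct, and it reconstructs essentially the intended argument: the paper itself gives no proof of Lemma~\ref{lem1}, only the citation to \cite[p.~720]{Tyszka}, where the construction is exactly the one you describe (one variable for each nonzero polynomial with coefficients in $\{0,\ldots,M+1\}$ supported on the monomials $x_1^{e_1}\cdots x_p^{e_p}$ with $e_i\leqslant d_i$, values forced by the $E_n$-equations, and a single multiplication equation encoding $D_1+1=D_2+1$). Your forcing induction and the verification that the excluded zero function and the extra coefficient value $M+1$ account for the count $(M+2)^{(d_1+1)\cdots(d_p+1)}-1$ are sound, so nothing further is needed.
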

\begin{lemma}\label{lem2} (\cite[p.~100]{Robinson}) If $\LL \in {\cal R} \cup \{\N,\N \setminus \{0\}\}$
and \mbox{$x,y,z \in \LL$}, then \mbox{$z(x+y-z)=0$} if and only if
\[
(zx+1)(zy+1)=z^2(xy+1)+1
\]
\end{lemma}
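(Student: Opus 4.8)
The plan is to reduce the stated biconditional to one polynomial identity in $\Z[x,y,z]$ and then to transfer it to the two non-ring members $\N$ and $\N\setminus\{0\}$ of ${\cal R}\cup\{\N,\N\setminus\{0\}\}$ by viewing them inside $\Z$.

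First I would expand both sides over $\Z[x,y,z]$:
\[
(zx+1)(zy+1)=z^2xy+zx+zy+1,\qquad z^2(xy+1)+1=z^2xy+z^2+1,
\]
so that cancelling the common part $z^2xy+1$ yields
\[
(zx+1)(zy+1)-\bigl(z^2(xy+1)+1\bigr)=zx+zy-z^2=z(x+y-z).
\]
This is an identity of polynomials (the only algebraic facts used are commutativity and the presence of $1$, both available for the rings considered here), so it specializes to every $\LL\in{\cal R}$: for all $x,y,z\in\LL$ we get $z(x+y-z)=(zx+1)(zy+1)-z^2(xy+1)-1$, and therefore $z(x+y-z)=0$ holds in $\LL$ exactly when $(zx+1)(zy+1)=z^2(xy+1)+1$ does. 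This disposes of the case $\LL\in{\cal R}$.

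For $\LL\in\{\N,\N\setminus\{0\}\}$ the element $x+y-z$ need not lie in $\LL$, so I would read $z(x+y-z)=0$, as is standard in this setting, as the assertion for $x,y,z\in\LL$ that the integer $z(x+y-z)$ equals $0$, i.e. that $zx+zy=z^2$. Since $\N\setminus\{0\}\subseteq\N\subseteq\Z$ and both $\N$ and $\N\setminus\{0\}$ are closed under addition and multiplication, for any $x,y,z$ drawn from them the quantities $zx+zy$, $z^2$, $(zx+1)(zy+1)$ and $z^2(xy+1)+1$ take the same value in $\Z$ as in $\LL$; invoking the case over $\Z$ already proved then gives the equivalence for $\N$ and $\N\setminus\{0\}$ as well.

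I expect no genuine obstacle: the statement is Julia Robinson's classical device for replacing an additive constraint by a multiplicative one, and the proof is essentially the one-line expansion above. The only point meriting a sentence is the meaning of $z(x+y-z)=0$ over $\N$ and $\N\setminus\{0\}$, which the embedding remark settles.
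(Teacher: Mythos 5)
Your proof is correct: the paper gives no proof of this lemma (it is cited from Robinson), and your one-line expansion showing $(zx+1)(zy+1)-\bigl(z^2(xy+1)+1\bigr)=z(x+y-z)$ is exactly the standard verification, with the embedding remark properly handling the interpretation of $z(x+y-z)=0$ over $\N$ and $\N\setminus\{0\}$. No gaps.
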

\par
Let $\alpha$, $\beta$, and $\gamma$ denote variables.
\begin{lemma}\label{lem3}
If \mbox{$\LL \in {\cal R} \cup \{\N,\N\setminus\{0\}\}$} and \mbox{$x,y,z \in \LL$}, then \mbox{$x+y=z$} if and only if
\begin{equation}\label{equ1}
(zx+1)(zy+1)=z^2(xy+1)+1
\end{equation}
and
\begin{equation}\label{equ2}
((z+1)x+1)((z+1)(y+1)+1)=(z+1)^2(x(y+1)+1)+1
\end{equation}
We can express equations~(\ref{equ1}) and~(\ref{equ2}) as a system ${\cal F}$
such that ${\cal F}$ involves $x,y,z$ and $20$ new variables and ${\cal F}$ consists of equations
of the forms \mbox{$\alpha+1=\gamma$} and \mbox{$\alpha \cdot \beta=\gamma$}.
\end{lemma}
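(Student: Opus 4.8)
The plan is to derive the equivalence from two applications of Lemma~\ref{lem2}, and then to obtain ${\cal F}$ by the standard device of assigning a fresh variable to each intermediate value occurring in~(\ref{equ1}) and~(\ref{equ2}).

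First I would apply Lemma~\ref{lem2} as it stands: for $x,y,z\in\LL$, equation~(\ref{equ1}) holds if and only if $z(x+y-z)=0$, which over every $\LL\in{\cal R}\cup\{\N,\N\setminus\{0\}\}$ reduces (expand both sides and cancel) to $zx+zy=z^2$. Then I would apply Lemma~\ref{lem2} a second time, after the substitution $z\mapsto z+1$, $x\mapsto x$, $y\mapsto y+1$; this is legitimate because $z+1,y+1\in\LL$. It shows that equation~(\ref{equ2}) holds if and only if $(z+1)\bigl(x+(y+1)-(z+1)\bigr)=0$, i.e. $(z+1)(x+y-z)=0$, i.e. $zx+zy+x+y=z^2+z$.

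Next I would combine the two conditions. If $x+y=z$, then $zx+zy=z(x+y)=z^2$ and $(z+1)(x+y)=(z+1)z=z^2+z$, so~(\ref{equ1}) and~(\ref{equ2}) both hold by the two applications of Lemma~\ref{lem2}. Conversely, if~(\ref{equ1}) and~(\ref{equ2}) hold, then $zx+zy=z^2$ and $zx+zy+x+y=z^2+z$; substituting the first identity into the second yields $z^2+x+y=z^2+z$, hence $x+y=z$. Every cancellation used here is valid over each ring and over $\N$ and $\N\setminus\{0\}$, since addition is cancellative there, so nothing is subtracted outside its domain; this settles the equivalence.

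Finally, to build ${\cal F}$ I would read each of~(\ref{equ1}) and~(\ref{equ2}) as a straight-line program whose only operations are ``add $1$ to a variable'' and ``multiply two variables'', assigning a fresh variable to each intermediate value. For~(\ref{equ1}) this needs $9$ new variables, for $zx$, $zx+1$, $zy$, $zy+1$, $(zx+1)(zy+1)$, $z^2$, $xy$, $xy+1$, and $z^2(xy+1)$; for~(\ref{equ2}) it needs $11$ new variables, for $z+1$, $y+1$, $(z+1)x$, $(z+1)x+1$, $(z+1)(y+1)$, $(z+1)(y+1)+1$, the left-hand product, $(z+1)^2$, $x(y+1)$, $x(y+1)+1$, and $(z+1)^2(x(y+1)+1)$; altogether $20$. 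The one point needing care is hitting exactly $20$: the outermost equality of each of~(\ref{equ1}) and~(\ref{equ2}) has the shape ``(left-hand product) $=$ (a product) $+\,1$'', so it is itself an equation of the admissible form $\alpha+1=\gamma$ and costs no extra variable, and~(\ref{equ1}) and~(\ref{equ2}) have no common subterm, so there is nothing to merge. I expect this bookkeeping to be the only real obstacle, since all the mathematical content is supplied by Lemma~\ref{lem2}.
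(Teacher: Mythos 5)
Your proof is correct and follows essentially the same route as the paper: two applications of Lemma~\ref{lem2} (the second under the substitution $z\mapsto z+1$, $y\mapsto y+1$) reduce the conjunction of~(\ref{equ1}) and~(\ref{equ2}) to $z(x+y-z)=0$ and $(z+1)(x+y-z)=0$, hence to $x+y=z$, and your subtraction-free cancellation argument is a valid way to make that last step uniform over $\N$ and arbitrary rings. Your list of $20$ auxiliary polynomials differs only immaterially from the paper's (you introduce variables for the two left-hand products instead of for $z^2(xy+1)+1$ and $(z+1)^2(x(y+1)+1)+1$), and both choices give exactly $20$ new variables and equations of the admitted forms.
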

\begin{proof}
By Lemma~\ref{lem2}, equation~(\ref{equ1}) is equivalent to
\begin{equation}\label{equ3}
z(x+y-z)=0
\end{equation}
and equation~(\ref{equ2}) is equivalent to
\begin{equation}\label{equ4}
(z+1)(x+(y+1)-(z+1))=0
\end{equation}
\vskip 0.01truecm
\noindent
The conjunction of equations~(\ref{equ3}) and~(\ref{equ4}) is equivalent to \mbox{$x+y=z$}.
The new $20$ variables express the following $20$ polynomials:
\vskip 0.2truecm
\centerline{$zx$,~~~~~$zx+1$,~~~~~$zy$,~~~~~$zy+1$,~~~~~$z^2$,~~~~~$xy$,~~~~~$xy+1$,}
\vskip 0.2truecm
\centerline{$z^2(xy+1)$,~~~~~$z^2(xy+1)+1$,~~~~~$z+1$,~~~~~$(z+1)x$,}
\vskip 0.2truecm
\centerline{$(z+1)x+1$,~~~~~$y+1$,~~~~~$(z+1)(y+1)$,~~~~~$(z+1)(y+1)+1$,}
\vskip 0.2truecm
\centerline{$(z+1)^2$,~~~$x(y+1)$,~~~~~$x(y+1)+1$,}
\vskip 0.2truecm
\centerline{$(z+1)^2(x(y+1)+1)$,~~~~~$(z+1)^2(x(y+1)+1)+1$.}
\end{proof}
\begin{lemma}\label{lem4} (cf. Observation~\ref{3maj})
Let \mbox{$D(x_1,\ldots,x_p) \in$} \mbox{${\Z}[x_1,\ldots,x_p]$}.
Assume that \mbox{${\rm deg}(D,x_i) \geqslant 1$} for each \mbox{$i \in \{1,\ldots,p\}$}. We can compute a positive
integer \mbox{$n>p$} and a system \mbox{$T \subseteq G_n$} which satisfies the following two conditions:
\vskip 0.2truecm
\noindent
{\tt Condition 4.} If \mbox{$\K \in {\cal R}{\sl ng} \cup \{\N,\N \setminus \{0\}\}$}, then
\[
\forall \tilde{x}_1,\ldots,\tilde{x}_p \in \K ~\Bigl(D(\tilde{x}_1,\ldots,\tilde{x}_p)=0 \Longleftrightarrow
\]
\[
\exists \tilde{x}_{p+1},\ldots,\tilde{x}_n \in \K ~(\tilde{x}_1,\ldots,\tilde{x}_p,\tilde{x}_{p+1},\ldots,\tilde{x}_n) ~solves~ T\Bigr)
\]
{\tt Condition 5.} If \mbox{$\K \in {\cal R}{\sl ng} \cup \{\N,\N \setminus \{0\}\}$}, then
for each \mbox{$\tilde{x}_1,\ldots,\tilde{x}_p \in \K$} with \mbox{$D(\tilde{x}_1,\ldots,\tilde{x}_p)=0$},
there exists a unique tuple \mbox{$(\tilde{x}_{p+1},\ldots,\tilde{x}_n) \in {\K}^{n-p}$} such that the tuple
\mbox{$(\tilde{x}_1,\ldots,\tilde{x}_p,\tilde{x}_{p+1},\ldots,\tilde{x}_n)$} solves $T$.
\vskip 0.2truecm
\noindent
Conditions 4 and 5 imply that for each \mbox{$\K \in {\cal R}{\sl ng} \cup \{\N,\N \setminus \{0\}\}$},
the equation \mbox{$D(x_1,\ldots,x_p)=0$} and the system $T$ have the same number of solutions in $\K$.
\end{lemma}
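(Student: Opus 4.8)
Here the plan is to obtain the required $T\subseteq G_n$ by post\-processing the system that Lemma~\ref{lem1} already produces. First I would apply Lemma~\ref{lem1} to $D$, getting a positive integer $n_0>p$ and a system $T'\subseteq E_{n_0}$ satisfying Conditions~1 and~2. Every equation of $T'$ has one of the forms $1=x_k$, $x_i+x_j=x_k$, or $x_i\cdot x_j=x_k$; the last form already lies in $G_n$, so I would keep those equations verbatim. The work is then to replace the equations $1=x_k$ and the additions $x_i+x_j=x_k$ by $G_n$\-gadgets that are equivalent to them \emph{and} rigid (their auxiliary variables uniquely determined by the old ones), uniformly over every $\K\in{\cal R}{\sl ng}\cup\{\N,\N\setminus\{0\}\}$.

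For the constant, I would adjoin two fresh variables $u,t$ and the three $G_n$\-equations $u\cdot u=u$, $u+1=t$, $u\cdot t=t$. The point is that this little system forces $u=1$ uniquely over every $\K$ in question: over a ring extending $\Z$, $u\cdot t=t$ with $t=u+1$ gives $u^2=1$, which together with $u^2=u$ yields $u=u^2=1$; over $\N$ the idempotent equation leaves $u\in\{0,1\}$, and $u=0$ would force $t=1$ and hence $u\cdot t=0\neq 1=t$; over $\N\setminus\{0\}$ the equation $u\cdot u=u$ alone gives $u=1$. (Note that one cannot instead force some variable to equal $0$: such a system would be unsolvable over $\N\setminus\{0\}$ and would break Condition~4 whenever $D$ has a root in $\N\setminus\{0\}$.) With $u$ pinned to $1$, I would replace each equation $1=x_k$ of $T'$ by the single $G_n$\-equation $x_k\cdot u=u$, which in the presence of $u=1$ is equivalent to $x_k=1$ and adds no further variable.

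For the additions, I would replace each equation $x_i+x_j=x_k$ of $T'$ by a private copy of the $20$\-variable system $\cal F$ of Lemma~\ref{lem3}; by that lemma each such block is equivalent to $x_i+x_j=x_k$ over every $\LL\in{\cal R}\cup\{\N,\N\setminus\{0\}\}$, and its proof presents the $20$ new variables as the $20$ listed polynomials in $x_i,x_j,x_k$, hence uniquely determined by them. Relabelling all variables as $x_1,\ldots,x_n$ (with the $p$ variables of $D$ first) gives a computable $T\subseteq G_n$ with $n\geqslant n_0>p$. Conditions~4 and~5 then follow by chaining the equivalences: a tuple $(\tilde x_1,\ldots,\tilde x_{n_0})$ solves $T'$ if and only if it extends to a solution of $T$ — forwards, set $u=1$, $t=2$ (legitimate since $1,2\in\K$) and the Lemma~\ref{lem3} variables to their polynomial values in $\K$; backwards, $T$ entails every equation of $T'$ — so Condition~4 reduces to Condition~1 of Lemma~\ref{lem1}, and the extension is unique (Condition~5) because the $T'$\-part is unique by Condition~2 of Lemma~\ref{lem1} and each added variable ($t$, and the Lemma~\ref{lem3} variables) is forced.

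The one genuinely delicate point I anticipate is the constant\-$1$ gadget: it must be simultaneously solvable and must rigidly pin a variable to $1$ over $\N$, over $\N\setminus\{0\}$, and over every ring extending $\Z$, and the asymmetry between $\N$ (which contains $0$) and $\N\setminus\{0\}$ (which does not), together with the possible presence of nontrivial idempotents in rings with zero divisors, rules out the obvious candidates. Once that is in place, the remainder is routine bookkeeping plus the cited Lemmas~\ref{lem1} and~\ref{lem3}.
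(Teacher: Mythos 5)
Your proposal is correct and follows essentially the same route as the paper: start from the system $T\subseteq E_n$ of Lemma~\ref{lem1}, eliminate the equations $1=x_k$ via the characterization $x=1\Longleftrightarrow\bigl(x\cdot x=x\wedge x\cdot(x+1)=x+1\bigr)$, and eliminate the additions via Lemma~\ref{lem3}. The only (harmless) cosmetic difference is that you pin a single fresh variable $u$ to $1$ and propagate with $x_k\cdot u=u$, whereas the paper applies the same three-equation gadget directly to each $x_k$ with one shared auxiliary variable $y$.
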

\begin{proof}
Let the system \mbox{$T \subseteq E_n$} be given by Lemma~\ref{lem1}.
For every \mbox{$\LL \in {\cal R} \cup \{\N,\N \setminus \{0\}\}$},
\[
\forall x \in \LL ~\Bigl(x=1 \Longleftrightarrow \Bigl(x \cdot x=x \wedge x \cdot (x+1)=x+1\Bigr)\Bigr)
\]
Therefore, if there exists \mbox{$m \in \{1,\ldots,n\}$} such that
the equation \mbox{$1=x_m$} belongs to $T$, then we introduce a new variable $y$ and
replace in $T$ each equation of the form \mbox{$1=x_k$} by the equations
\mbox{$x_k \cdot x_k=x_k$}, \mbox{$x_k+1=y$}, \mbox{$x_k \cdot y=y$}.
Next, we apply Lemma~\ref{lem3} to each equation of the form \mbox{$x_i+x_j=x_k$} that belongs to $T$
and replace in $T$ each such equation by an equivalent system of equations of the forms
\mbox{$\alpha+1=\gamma$} and \mbox{$\alpha \cdot \beta=\gamma$}.
\end{proof}
\section{{\bf The main consequence of Conjecture~\ref{con1}}}
\begin{theorem}\label{the0}
Conjecture~\ref{con1} implies that there is an algorithm which takes as input a Diophantine equation,
returns an integer, and this integer is greater than the heights of rational solutions, if the solution set is finite.
\end{theorem}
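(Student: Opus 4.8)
The plan is to combine Lemma~\ref{lem4} with Conjecture~\ref{con1}, working throughout in the ring $\Q$, which belongs to $\mathcal{R}{\sl ng}$ since it is a ring extending $\Z$. Let $D(x_1,\ldots,x_p)=0$ be the input Diophantine equation (an equation of the form $A=B$ is first rewritten as $A-B=0$). First I would dispose of a degenerate case: if $\deg(D,x_i)=0$ for some $i$, i.e.\ some nominal variable does not actually occur in $D$, then the set of rational solutions of $D=0$ is either empty or infinite, because the missing coordinate ranges freely over $\Q$; hence the hypothesis ``the solution set is finite'' forces it to be empty, and the algorithm may safely return any integer, say $1$. So from now on I may assume $\deg(D,x_i)\geqslant 1$ for every $i\in\{1,\ldots,p\}$, which is exactly the hypothesis needed to invoke Lemma~\ref{lem4}.

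Next I would apply Lemma~\ref{lem4} to $D$. It yields an effectively computable positive integer $n>p$ (given by the explicit formula of Lemma~\ref{lem1} together with the bounded number of auxiliary variables added in the proof of Lemma~\ref{lem4}) and a system $T\subseteq G_n$ such that, taking $\K=\Q$, a tuple $(\tilde x_1,\ldots,\tilde x_p)$ solves $D=0$ if and only if it extends to a tuple $(\tilde x_1,\ldots,\tilde x_n)$ solving $T$, the extension being \emph{unique} by Condition~5. Consequently the map sending a rational solution of $T$ to its first $p$ coordinates is a bijection onto the set of rational solutions of $D=0$; in particular $D=0$ has only finitely many rational solutions if and only if $T$ does.

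Now assume the rational solution set of $D=0$ is finite. Then $T\subseteq G_n$ has only finitely many rational solutions, and since $n>p\geqslant 1$ forces $n\geqslant 2$, Conjecture~\ref{con1} gives $h(\tilde x_1,\ldots,\tilde x_n)\leqslant 2^{2^{n-2}}$ for every rational solution of $T$. Restricting to the first $p$ coordinates, every rational solution of $D=0$ satisfies $h(\tilde x_1,\ldots,\tilde x_p)\leqslant 2^{2^{n-2}}$. Hence the algorithm that computes $n$ from $D$ by Lemma~\ref{lem4} and returns $1+2^{2^{n-2}}$ (and returns $1$ in the degenerate case above) always returns an integer, and this integer exceeds the height of every rational solution whenever the solution set is finite.

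The argument is essentially bookkeeping once Lemma~\ref{lem4} is available, so I do not expect a deep obstacle; the point that genuinely requires care — and the only place the reduction could break down if handled carelessly — is that it must preserve \emph{finiteness} of the solution set, not merely solvability. That is precisely what the uniqueness clause (Condition~5) supplies: without it, a single rational solution of $D=0$ could a priori lift to infinitely many solutions of $T$, and Conjecture~\ref{con1} could not be applied. The remaining mild checks are that $\Q$ lies within the scope of Lemmas~\ref{lem1}--\ref{lem4}, that $n$ is genuinely computable from $D$, and that $n\geqslant 2$ so that the non-trivial branch of the bound in Conjecture~\ref{con1} is the one that governs the original $p$ variables.
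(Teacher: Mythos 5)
Your proposal is correct and takes the same route as the paper, whose entire proof is the remark that the theorem ``follows from Lemma~\ref{lem4} for $\K=\Q$''; you have simply spelled out the bookkeeping (the degenerate case of a missing variable, the role of Condition~5 in preserving finiteness, and the final bound $1+2^{\textstyle 2^{n-2}}$) that the paper leaves implicit.
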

\begin{proof}
It follows from Lemma~\ref{lem4} for \mbox{$\K=\Q$}.
The claim of Theorem~\ref{the0} also follows from Observation~\ref{3maj}.
\end{proof}
\begin{corollary}
Conjecture~\ref{con1} implies that the set of all Diophantine equations
which have infinitely many rational solutions is recursively enumerable.
Assuming Conjecture~\ref{con1}, a single query to the halting oracle
decides whether or not a given Diophantine equation has infinitely many rational solutions.
By the Davis-Putnam-Robinson-Matiyasevich theorem, the same is true for
an oracle that decides whether or not a given Diophantine equation has an integer solution.
\end{corollary}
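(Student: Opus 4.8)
The plan is to feed the given Diophantine equation to the algorithm of Theorem~\ref{the0} and then exploit the fact that only finitely many rational tuples lie below any fixed height bound.

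First I would fix a Diophantine equation $D(x_1,\ldots,x_p)=0$ and apply the algorithm of Theorem~\ref{the0} to it, obtaining a computable integer $B=B(D)$ such that, whenever $D(x_1,\ldots,x_p)=0$ has only finitely many rational solutions, every rational solution $(x_1,\ldots,x_p)$ satisfies $h(x_1,\ldots,x_p)<B$. Next I would prove the key equivalence: the equation $D(x_1,\ldots,x_p)=0$ has infinitely many rational solutions if and only if it has a rational solution $(x_1,\ldots,x_p)$ with $h(x_1,\ldots,x_p)\geqslant B$. The forward implication holds because there are only finitely many rational $p$-tuples of height less than $B$ (each coordinate, written in lowest terms, has numerator and denominator bounded in absolute value by $B-1$), so an infinite solution set must contain a tuple of height at least $B$. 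The backward implication is the only place Conjecture~\ref{con1} enters: if the solution set were finite, then by Theorem~\ref{the0} every solution would have height smaller than $B$, contradicting the existence of a solution of height at least $B$; hence the solution set is infinite.

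This equivalence immediately yields a semi-decision procedure: given $D$, compute $B$, enumerate all rational $p$-tuples, and halt as soon as one is found that solves $D(x_1,\ldots,x_p)=0$ and has height at least $B$ (both conditions are decidable for an explicitly given tuple). The procedure halts exactly when $D(x_1,\ldots,x_p)=0$ has infinitely many rational solutions, so that set is recursively enumerable; let $M$ be a Turing machine implementing the procedure. Since the set is recursively enumerable, a single query to the halting oracle, namely whether $M$ halts on the code of $D$, decides whether $D(x_1,\ldots,x_p)=0$ has infinitely many rational solutions.

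Finally, by the Davis-Putnam-Robinson-Matiyasevich theorem every recursively enumerable set is Diophantine, so the recursively enumerable set of Diophantine equations with infinitely many rational solutions admits a many-one reduction to the set of Diophantine equations having an integer solution: from $D$ one computes a single equation $D'$ that has an integer solution if and only if $D(x_1,\ldots,x_p)=0$ has infinitely many rational solutions, and then one asks the integer-solvability oracle about $D'$. I expect the whole argument to be routine bookkeeping around Theorem~\ref{the0}; the only substantive point is the backward implication of the key equivalence, and one must be careful that $B$ depends on the equation alone, so that the search in the semi-decision procedure ranges over tuples bounded by a fixed, already-computed integer.
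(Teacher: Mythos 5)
Your argument is correct and is exactly the intended derivation, which the paper leaves implicit: Theorem~\ref{the0} yields a computable bound $B(D)$, the equivalence ``infinitely many rational solutions iff some rational solution has height $\geqslant B(D)$'' gives a semi-decision procedure and hence recursive enumerability, and the halting-oracle and DPRM reductions follow routinely. No gaps; the one point worth stressing (that $B$ is computed uniformly from $D$ before the search begins) you have already addressed.
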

\par
For many Diophantine equations we know that the number of rational solutions is
finite by Faltings' theorem. Faltings' theorem tells that certain curves
have finitely many rational points, but no known proof gives any bound on the sizes of the
numerators and denominators of the coordinates of those points, see \cite[p.~722]{Gowers}.
In all such cases Conjecture~\ref{con1} allows us to compute such a bound.
If this bound is small enough, that allows us to find all rational solutions by an exhaustive search.
For example, the equation \mbox{$x_1^5-x_1=x_2^2-x_2$} has only finitely
many rational solutions~\mbox{(\cite[p.~212]{Mignotte})}. The known rational solutions are:
\mbox{$(-1,0)$},
\mbox{$(-1,1)$},
\mbox{$(0,0)$},
\mbox{$(0,1)$},
\mbox{$(1,0)$},
\mbox{$(1,1)$},
\mbox{$(2,-5)$},
\mbox{$(2,6)$},
\mbox{$(3,-15)$},
\mbox{$(3,16)$},
\mbox{$(30,-4929)$},
\mbox{$(30,4930)$},
\mbox{$\left(\frac{1}{4},\frac{15}{32}\right)$},
\mbox{$\left(\frac{1}{4},\frac{17}{32}\right)$},
\mbox{$\left(-\frac{15}{16},-\frac{185}{1024}\right)$},
\mbox{$\left(-\frac{15}{16},\frac{1209}{1024}\right)$},
and the existence of other solutions is an open question, see \mbox{\cite[pp.~223--224]{Siksek}}. 
The system
\begin{displaymath}
\left\{
\begin{array}{rcl}
x_3+1 &=& x_2 \\
x_2 \cdot x_3 &=& x_4 \\
x_5+1 &=& x_1 \\
x_1 \cdot x_1 &=& x_6 \\
x_6 \cdot x_6 &=& x_7 \\
x_7 \cdot x_5 &=& x_4
\end{array}
\right.
\end{displaymath}
\vskip 0.01truecm
\noindent
is equivalent to \mbox{$x_1^5-x_1=x_2^2-x_2$}. By Conjecture~\ref{con1},
$h\left(x_1^4\right)=h\left(x_7\right) \leqslant h(x_1,\ldots,x_7) \leqslant 2^{\textstyle 2^{7-2}}=2^{32}$.
Therefore, $h(x_1) \leqslant \left(2^{32}\right)^{\textstyle \frac{1}{4}}=256$.
Assuming that Conjecture~\ref{con1} holds, the following {\sl MuPAD}
code finds all rational solutions of the equation \mbox{$x_1^5-x_1=x_2^2-x_2$}.
\begin{quote}
\begin{code}
solutions:={}:
for i from -256 to 256 do
for j from 1 to 256 do
x:=i/j:
y:=4*x^5-4*x+1:
p:=numer(y):
q:=denom(y):
if numlib::issqr(p) and numlib::issqr(q) then
z1:=sqrt(p/q):
z2:=-sqrt(p/q):
y1:=(z1+1)/2:
y2:=(z2+1)/2:
solutions:=solutions union {[x,y1],[x,y2]}:
end_if:
end_for:
end_for:
print(solutions):
\end{code}
\end{quote}
\vskip 0.01truecm
\noindent
The code solves the equivalent equation
\[
4x_1^5-4x_1+1=(2x_2-1)^2
\]
and displays the already presented solutions.
\vskip 0.2truecm
\par
{\sl MuPAD} is a general-purpose computer algebra system.
The commercial version of {\sl MuPAD} is no longer available as a \mbox{stand-alone}
product, but only as the {\sl Symbolic Math Toolbox} of {\sl MATLAB}. Fortunately,
this code can be executed by {\sl MuPAD Light},
which was offered for free for research and education until autumn 2005.
\section{{\bf Algebraic lemmas -- part 2}}
\begin{lemma}\label{march}
Lemmas~\ref{lem2} and~\ref{lem3} are not necessary for proving that in the rational
domain each Diophantine equation is equivalent to a system of equations of the forms
\mbox{$\alpha+1=\gamma$} and \mbox{$\alpha \cdot \beta=\gamma$}.
\end{lemma}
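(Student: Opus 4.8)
The plan is to avoid the identity of Lemma~\ref{lem2} entirely and instead show directly how to build, over $\Q$, any polynomial expression out of the two primitive operations ``add $1$'' and ``multiply''. The key observation is that in a ring that extends $\Z$ — in particular in $\Q$ — one can realize integer subtraction and integer coefficients purely via the relations $\alpha+1=\gamma$ and $\alpha\cdot\beta=\gamma$, because $x+y=z$ is equivalent to $z+1=z'$, $y+1=y'$ together with $z'=x+y'$ — no, that still uses addition; so instead I would exploit multiplication by a fixed large constant together with the ``add $1$'' relation to encode arbitrary integers, and then express a general sum $x_i+x_j=x_k$ as an instance of a Diophantine condition that, after clearing denominators, becomes a polynomial identity in which only products and increments occur.

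Concretely, the first step is to recall that over $\Q$ every Diophantine equation $D(x_1,\dots,x_p)=0$ is equivalent, by a standard reduction, to a system built from $1=x_k$, $x_i+x_j=x_k$, and $x_i\cdot x_j=x_k$ — this is exactly $E_n$ of Lemma~\ref{lem1}, and Condition~1 of that lemma gives it with $\K=\Q\in{\cal R}{\sl ng}$. The second step is to eliminate the constant equations $1=x_k$ using the identity, valid in every $\LL\in{\cal R}\cup\{\N,\N\setminus\{0\}\}$, that $x=1$ iff $x\cdot x=x$ and $x\cdot(x+1)=x+1$, exactly as in the proof of Lemma~\ref{lem4}; this replaces each $1=x_k$ by equations of the two allowed forms at the cost of one extra variable. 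The third and crucial step is to eliminate addition $x_i+x_j=x_k$ \emph{without} Lemma~\ref{lem3}: here I would observe that $x_i+x_j=x_k$ holds in $\Q$ iff $(x_i+x_j)^2=x_k^2$ and $x_i+x_j$ and $x_k$ have, in a suitable sense, the same ``sign witness'' — more robustly, iff there is a rational $t$ with $t\cdot x_k=x_i$ and $(1-t)$-type relations — so that after squaring and expanding, the relation $x_i^2+2x_ix_j+x_j^2=x_k^2$ involves only the products $x_i\cdot x_i$, $x_i\cdot x_j$, $x_j\cdot x_j$, $x_k\cdot x_k$, and the doubling $2x_ix_j$, and doubling $u\mapsto 2u$ is itself expressible as $u+1=u'$ iterated, i.e.\ via ``add $1$'' together with a product by the constant $2$ which is $1+1$. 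Assembling these reductions shows every rational Diophantine equation is equivalent to a system over $G_n$, which is the assertion of Lemma~\ref{march} (that Lemmas~\ref{lem2} and~\ref{lem3} are not needed).

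The step I expect to be the main obstacle is the third one: eliminating addition cleanly over $\Q$ using only products and increments, while preserving the finiteness/uniqueness properties one wants downstream, is delicate because the naive ``square both sides'' trick introduces a sign ambiguity ($x_i+x_j=\pm x_k$). I would resolve this by working instead with the single relation $x_k\cdot(x_k-x_i-x_j)=0$ rewritten as $x_k\cdot x_k = x_k\cdot x_i + x_k\cdot x_j$, and then noting that an equation of the shape $A=B+C$ among already-named product-variables can be folded back into the same normal form by one more application of the squaring device or, more simply, by the substitution trick $B+C=A \iff (B+1)\cdot(C+1)=B\cdot C+A+1$, where the right-hand side expands using only $\alpha+1=\gamma$ and $\alpha\cdot\beta=\gamma$ applied to the subterms $B+1$, $C+1$, $B\cdot C$, $B\cdot C+A$, $B\cdot C+A+1$ — a bookkeeping argument rather than a new idea. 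The remainder is routine: count the auxiliary variables, check that each introduced equation is of one of the two permitted forms, and verify by inspection that the chain of equivalences holds in $\Q$ (indeed in every $\LL\in{\cal R}\cup\{\N,\N\setminus\{0\}\}$), so no appeal to the special polynomial identity of Lemma~\ref{lem2} is made anywhere.
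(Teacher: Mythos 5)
There is a genuine gap at the step you yourself flag as the main obstacle: the elimination of addition. Your final device, $B+C=A \iff (B+1)\cdot(C+1)=B\cdot C+A+1$, is circular. The identity is true, but its right-hand side contains the subterm $B\cdot C+A$, a sum of two already-named quantities; introducing a variable equal to $B\cdot C+A$ using only equations of the forms $\alpha+1=\gamma$ and $\alpha\cdot\beta=\gamma$ is exactly the problem you set out to solve. The same defect afflicts your other routes: $x_k\cdot x_k=x_k\cdot x_i+x_k\cdot x_j$ still has an irreducible two-variable sum on its right side (and is anyway equivalent only to $x_k=0\ \vee\ x_k=x_i+x_j$, reintroducing the very disjunction that Lemma~\ref{lem3} exists to remove), while the parametrization $t\cdot x_j=x_i$, $(t+1)\cdot x_j=x_k$ works only when $x_j\neq 0$ and therefore produces a disjunction of systems (that is the content of Lemma~\ref{april}), not the single equivalent system the statement requires.

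The missing idea, which the paper's proof supplies and which is genuinely specific to $\Q$, is a division trick: for every rational $y$ one has $y^2+1\neq 0$ and $y(y^2+1)+1\neq 0$, hence
\[
x+y=z \ \Longleftrightarrow\ \bigl(y(y^2+1)+1\bigr)\cdot\Bigl(\tfrac{x(y^2+1)}{y(y^2+1)+1}+1\Bigr)=z(y^2+1)+1 .
\]
Here the troublesome sum $x(y^2+1)+y(y^2+1)+1$ is factored as a product of a provably nonzero quantity and a term of the shape $(\cdot)+1$, and the quotient $\tfrac{x(y^2+1)}{y(y^2+1)+1}$ can be named by a legal multiplication equation precisely because its denominator never vanishes over $\Q$; this yields a system $W\subseteq G_{12}$ replacing each $x_i+x_j=x_k$. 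Your first two steps (invoking Lemma~\ref{lem1} and eliminating $1=x_k$ via $x\cdot x=x$, $x\cdot(x+1)=x+1$) match the paper, but without something playing the role of this nonvanishing denominator the reduction of addition to the two permitted forms does not go through.
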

\begin{proof}
By Lemma~\ref{lem1}, an arbitrary Diophantine equation is equivalent to a system \mbox{$T \subseteq E_n$},
where $n$ and $T$ can be computed. If there exists \mbox{$m \in \{1,\ldots,n\}$}
such that the equation \mbox{$1=x_m$} belongs to $T$, then we introduce a new variable $t$ and
replace in $T$ each equation of the form \mbox{$1=x_k$} by the equations
\mbox{$x_k \cdot x_k=x_k$}, \mbox{$x_k+1=t$}, and \mbox{$x_k \cdot t=t$}.
For each rational number~$y$, we have \mbox{$y^2+1 \neq 0$} and \mbox{$y(y^2+1)+1 \neq 0$}. Hence,
for each rational numbers $x$, $y$, $z$,
\[
x+y=z ~ \Longleftrightarrow ~ x(y^2+1)+y(y^2+1)=z(y^2+1) ~ \Longleftrightarrow
\]
\[
x(y^2+1)+y(y^2+1)+1=z(y^2+1)+1 ~ \Longleftrightarrow
\]
\[
\left(y(y^2+1)+1\right) \cdot \left(\frac{x(y^2+1)}{y(y^2+1)+1}+1\right)=z(y^2+1)+1
\]
We transform the last equation into an equivalent system \mbox{$W \subseteq G_{12}$} in such
a way that the variables \mbox{$x_1,\ldots,x_{12}$} correspond to the following rational expressions:
\[
x,~y,~z,~y^2,~y^2+1,~y(y^2+1),~y(y^2+1)+1,~x(y^2+1),
\]
\[
\frac{x(y^2+1)}{y(y^2+1)+1},~\frac{x(y^2+1)}{y(y^2+1)+1}+1,~z(y^2+1),~z(y^2+1)+1.
\]
In this way, we replace in $T$ each equation of the form \mbox{$x_i+x_j=x_k$} by an equivalent
system of equations of the forms \mbox{$\alpha+1=\gamma$} and \mbox{$\alpha \cdot \beta=\gamma$}.
\end{proof}
\par
The next lemma enable us to prove Theorem~\ref{the0} without using Lemma~\ref{lem4}.
\begin{lemma}\label{april}
For solutions in a field, each system \mbox{${\cal S} \subseteq E_n$}
is equivalent to \mbox{$T_1 \vee \cdots \vee T_p$}, where each $T_i$
is a system of equations of the forms \mbox{$\alpha+1=\gamma$}
and \mbox{$\alpha \cdot \beta=\gamma$}.
\end{lemma}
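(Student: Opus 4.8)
The plan is to rewrite $\mathcal{S}$ equation by equation. Every equation $x_i\cdot x_j=x_k$ of $\mathcal{S}$ is already of the required shape, so it is kept unchanged. An equation $1=x_k$ is eliminated exactly as in the proofs of Lemmas~\ref{lem4} and~\ref{march}: introduce a fresh variable $y_k$ and impose $x_k\cdot x_k=x_k$, $x_k+1=y_k$, $x_k\cdot y_k=y_k$. Over a field $t\cdot t=t$ forces $t\in\{0,1\}$, and $t=0$ would give $y_k=1$ together with $0=0\cdot 1=1$, which is impossible, so this triple is equivalent to $x_k=1$ and introduces no disjunction. Thus the whole problem reduces to rewriting a \emph{single} equation $x+y=z$, over an arbitrary field, as a disjunction of systems built from $\alpha+1=\gamma$ and $\alpha\cdot\beta=\gamma$; doing this for every addition equation of $\mathcal{S}$ and then distributing the conjunction over the resulting disjunctions produces the desired form $T_1\vee\cdots\vee T_p$.

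Over $\Q$ one can handle $x+y=z$ by a single substitution (Lemma~\ref{march}), because the polynomials $y^2+1$ and $y(y^2+1)+1$ have no rational zeros; over a general field both may vanish, so a genuine case split is unavoidable. I would split on whether $y=0$. If $y\neq 0$ then $y$ is invertible and $x+y=z$ is equivalent to $\tfrac{x}{y}+1=\tfrac{z}{y}$: introduce variables $y'$, $u$, $v$ and the equations $y\cdot y'=e$, $y\cdot u=x$, $y\cdot v=z$, $u+1=v$, where $e$ is a variable pinned to $1$; the equation $y\cdot y'=1$ is solvable in $y'$ exactly when $y\neq 0$, and on its solution set $y',u,v$ are uniquely determined by $x,y,z$, so this system captures precisely the pairs with $y\neq 0$ satisfying $x+y=z$. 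If $y=0$ the equation is just $x=z$. Both ``$e=1$'' and ``$x=z$'' and ``$y=0$'' are themselves expressible with the two permitted forms: $e\cdot e=e$, $e+1=t$, $e\cdot t=t$ forces $e=1$ in a field (the value $e=0$ would give $0=e\cdot t=t=1$); then $x\cdot e=z$ says $x=z$ and $y+1=e$ says $y=0$. The two disjuncts are mutually exclusive, since in a field $0$ has no inverse, and exhaustive, so their disjunction is equivalent to $x+y=z$.

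The routine part is the book-keeping: each disjunct is a system purely of the forms $\alpha+1=\gamma$ and $\alpha\cdot\beta=\gamma$, and all newly introduced variables are functions of $x,y,z$ on the solution set of that disjunct, so the construction is faithful disjunct-by-disjunct with respect to the number of solutions --- which is what allows Conjecture~\ref{con1} to be applied separately to each $T_i$ and the resulting height bounds to be maximised. The point I would be most careful about, and the conceptual heart of the lemma, is that passing from $\Q$ to an arbitrary field really does force a disjunction: no single uniform substitution can work, because for any fixed polynomial identity one is tempted to use there is a field in which the relevant denominator vanishes, and the $y=0$ split is the minimal way around this. A secondary check is that the constant-forcing gadget (``$e=1$'') and the invertibility gadget behave correctly in every field, including those of small characteristic where several of the small integers one might be tempted to use coincide.
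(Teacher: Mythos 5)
Your proof is correct and follows essentially the same route as the paper: eliminate the equations $1=x_k$ with the idempotent gadget, then split each addition $x+y=z$ into the cases where one summand is zero or nonzero, encoding nonvanishing by the solvability of a division equation, and distribute to obtain $2^m$ disjuncts. The only differences are in the choice of gadgets (you use $y\cdot y'=1$ and an explicit $e=1$ variable, the paper uses $r\cdot x_j=x_j+1$ for nonvanishing and $(x_j+1)\cdot x_j=x_j$ together with $x_i+1=y=x_k+1$ for the zero case), which is immaterial.
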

\begin{proof}
Acting as in the proof of Lemma~\ref{march}, we eliminate from~${\cal S}$ all equations of the form \mbox{$1=x_k$}.
Let $m$ denote the number of equations of the form \mbox{$x_i+x_j=x_k$}
that belong to~${\cal S}$. We can assume that \mbox{$m>0$}. Let the variables
$y$, $z$, $t$, $w$, $s$, and $r$ be new. Let
\[
{\cal S}_1=\left({\cal S} \setminus \{x_i+x_j=x_k\}\right) \cup
\]
\[
\{x_i+1=y,~~x_k+1=y,~~x_j+1=z,~~z \cdot x_j=x_j\}
\]
and let
\[
{\cal S}_2=\left({\cal S} \setminus \{x_i+x_j=x_k\}\right) \cup
\]
\[
\{t \cdot x_j=x_i,~~t+1=w,~~w \cdot x_j=x_k,~~x_j+1=s,~~r \cdot x_j=s\}
\]
The system ${\cal S}_1$ expresses that \mbox{$x_i+x_j=x_k$} and \mbox{$x_j=0$}.
The system ${\cal S}_2$ expresses that \mbox{$x_i+x_j=x_k$} and \mbox{$x_j \neq 0$}.
Therefore, \mbox{${\cal S} \Longleftrightarrow \left({\cal S}_1 \vee {\cal S}_2\right)$}.
We have described a procedure which transforms ${\cal S}$ into ${\cal S}_1$ and ${\cal S}_2$.
We iterate this procedure for ${\cal S}_1$ and ${\cal S}_2$
and finally obtain the systems \mbox{$T_1,\ldots,T_{2^m}$} without equations of the form
\mbox{$x_i+x_j=x_k$}. The systems \mbox{$T_1,\ldots,T_{2^m}$}
satisfy \mbox{${\cal S} \Longleftrightarrow \left(T_1 \vee \cdots \vee T_{2^m}\right)$} and they contain only
equations of the forms \mbox{$\alpha+1=\gamma$} and \mbox{$\alpha \cdot \beta=\gamma$}.
\end{proof}
\section{{\bf Systems which have infinitely many rational solutions}}
\begin{lemma}\label{sierpinski} (\cite[p.~391]{Sierpinski})
If $2$ has an odd exponent in the prime factorization of a positive \mbox{integer $n$},
then $n$ can be written as the sum of three squares of integers.
\end{lemma}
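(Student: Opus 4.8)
The plan is to deduce the statement from the classical three-square theorem of Gauss and Legendre: a positive integer $n$ is \emph{not} a sum of three squares of integers if and only if $n = 4^{a}(8b+7)$ for some non-negative integers $a$ and $b$. Taking this characterization as known, I would prove the lemma by contraposition. Assume that the positive integer $n$ cannot be written as $x^2+y^2+z^2$ with $x,y,z \in \Z$; then $n = 4^{a}(8b+7)$ for suitable $a,b \in \N$.

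Next I would simply read off the exponent of $2$ in this factorization. The factor $4^{a} = 2^{2a}$ supplies exactly $2a$ copies of the prime $2$, while the factor $8b+7$ is odd and hence supplies none. Consequently the exponent of $2$ in the prime factorization of $n$ equals $2a$, an even number. Contraposing, if $2$ occurs with an odd exponent in the factorization of $n$, then $n$ must be a sum of three squares of integers, which is exactly the assertion of the lemma. (Odd $n$ are not covered by the hypothesis, since for them the exponent of $2$ is $0$, which is even; this is consistent with the fact that, e.g., $n=7$ is not a sum of three squares.)

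The only substantive ingredient is the three-square theorem itself, which is precisely why the lemma is cited to \cite[p.~391]{Sierpinski} rather than proved from first principles; the remainder is the elementary $2$-adic bookkeeping carried out above, and I expect no obstacle there. I would not attempt a self-contained argument, since that would amount to reproving Legendre's theorem — via the arithmetic of ternary quadratic forms together with Dirichlet's theorem on primes in arithmetic progressions — which is far more than the present application requires.
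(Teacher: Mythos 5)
Your deduction is correct: the paper offers no proof of this lemma and simply cites Sierpi\'nski, and the intended content of that citation is exactly the Gauss--Legendre three-square theorem, from which your contrapositive argument (the exponent of $2$ in $4^{a}(8b+7)$ is $2a$, hence even) follows immediately. Nothing further is needed.
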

\begin{lemma}\label{or}
For each positive rational number $z$, $z$ or $2z$ can be written as the sum of three squares of rational numbers.
\end{lemma}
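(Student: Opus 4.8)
The plan is to reduce everything to Lemma~\ref{sierpinski} via the elementary fact that being a sum of three rational squares is invariant under multiplication by a nonzero rational square. Concretely, write the given positive rational as $z=\frac{p}{q}$ in lowest terms with $p,q$ positive integers, and use the identity $z=\frac{pq}{q^{2}}$. Then if the \emph{integer} $pq$ is a sum of three integer squares, say $pq=a^{2}+b^{2}+c^{2}$, we get $z=\left(\frac{a}{q}\right)^{2}+\left(\frac{b}{q}\right)^{2}+\left(\frac{c}{q}\right)^{2}$, a sum of three rational squares; and the same remark applies to $2z$. So it suffices to exhibit, among $pq$ and the numerator-times-denominator of $2z$ written in lowest terms, one positive integer in whose prime factorization $2$ occurs to an odd power, and then invoke Lemma~\ref{sierpinski}.

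The core is then a short $2$-adic bookkeeping. Let $e$ denote the exponent of $2$ in the factorization of $pq$ (equivalently, the exponent of $2$ in whichever of $p,q$ is even, or $0$ if both are odd). If $e$ is odd, Lemma~\ref{sierpinski} already gives that $pq$ is a sum of three integer squares, so $z$ itself works. If $e$ is even, I pass to $2z=\frac{2p}{q}$ and check that its numerator-times-denominator in lowest terms has an odd exponent of $2$, splitting into two cases: if $q$ is odd, then $\frac{2p}{q}$ is already in lowest terms and the relevant product is $2pq$, whose exponent of $2$ is $e+1$; if $q$ is even, then $p$ is odd, $\frac{2p}{q}=\frac{p}{q/2}$ is in lowest terms, and the relevant product is $\frac{pq}{2}$, whose exponent of $2$ is $e-1\geqslant 1$. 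In both cases the exponent is odd, so Lemma~\ref{sierpinski} applies to $2z$, completing the proof.

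Once the square-invariance reduction is set up, the argument is pure arithmetic, so I do not expect a genuine obstacle. The one point needing care is verifying that the fractions $\frac{2p}{q}$ and $\frac{p}{q/2}$ are actually in lowest terms --- this follows from $\gcd(p,q)=1$ together with the parity of $q$ --- so that "numerator times denominator" is a well-defined single integer to which Lemma~\ref{sierpinski}, a statement about one integer and its prime factorization, may be legitimately applied. I note that one could alternatively invoke the full Legendre three-square theorem and its known rational counterpart, but Lemma~\ref{sierpinski} alone is enough here, since we only ever use the implication "odd exponent of $2$ $\Rightarrow$ sum of three squares."
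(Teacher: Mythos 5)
Your proof is correct and follows essentially the same route as the paper: reduce to Lemma~\ref{sierpinski} by writing $z=\frac{pq}{q^{2}}$ and split on the parity of the exponent of $2$ in $pq$. The only difference is that your lowest-terms bookkeeping for $2z$ is unnecessary --- the paper simply applies Lemma~\ref{sierpinski} directly to the integer $2pq$ (whose exponent of $2$ is odd when that of $pq$ is even) and writes $2z=\left(\frac{j_1}{q}\right)^{2}+\left(\frac{j_2}{q}\right)^{2}+\left(\frac{j_3}{q}\right)^{2}$.
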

\begin{proof}
We find positive integers $p$ and $q$ with \mbox{$z=\frac{p}{q}$}.
If $2$ has an odd exponent in the prime factorization of $pq$, then by
Lemma~\ref{sierpinski} there exist integers $i_1$, $i_2$, $i_3$ such that
\mbox{$pq=i_1^2+i_2^2+i_3^2$}. Hence,
\[
z=\left(\frac{i_1}{q}\right)^2+\left(\frac{i_2}{q}\right)^2+\left(\frac{i_3}{q}\right)^2
\]
If $2$ has an even exponent in the prime factorization of $pq$, then
by Lemma~\ref{sierpinski} there exist integers $j_1$, $j_2$, $j_3$ such that \mbox{$2pq=j_1^2+j_2^2+j_3^2$}. Hence,
\[
2z=\left(\frac{j_1}{q}\right)^2+\left(\frac{j_2}{q}\right)^2+\left(\frac{j_3}{q}\right)^2
\]
\end{proof}
\begin{lemma}\label{technical}
A rational number $z$ can be written as the sum of three squares of rational numbers if and only if
there exist rational numbers $r$, $s$, $t$ such that \mbox{$z=r^2\left(s^2\left(t^2+1\right)+1\right)$}.
\end{lemma}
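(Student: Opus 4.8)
The plan is to reduce the whole equivalence to the single polynomial identity
\[
r^{2}\bigl(s^{2}(t^{2}+1)+1\bigr)=(rst)^{2}+(rs)^{2}+r^{2},
\]
which holds in every commutative ring and in particular over $\Q$; expanding the left-hand side term by term produces exactly the right-hand side. The ``if'' direction is then immediate: if $z=r^{2}(s^{2}(t^{2}+1)+1)$ for some $r,s,t\in\Q$, the identity exhibits $z$ as $(rst)^{2}+(rs)^{2}+r^{2}$, a sum of three rational squares.

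For the ``only if'' direction I would first note that both sides of the equivalence force $z\geqslant 0$: one has $s^{2}(t^{2}+1)+1\geqslant 1$ and $r^{2}\geqslant 0$, so the right-hand side above is non-negative, while a sum of three squares is non-negative; hence for $z<0$ both statements are false and there is nothing to prove. Assuming $z\geqslant 0$ is written as a sum of three rational squares, I would invert the substitution $(rst,rs,r)=(A,B,C)$ by taking $r=C$, $s=B/C$, $t=A/B$, the only obstruction being division by zero, which occurs exactly when some of the squares vanish. This is handled by a case split on the number $k$ of nonzero squares actually needed, after relabeling: if $k=0$ then $z=0$ and $(r,s,t)=(0,0,0)$; if $k=1$ then $z=A^{2}$ with $A\neq0$ and $(r,s,t)=(A,0,0)$; if $k=2$ then $z=A^{2}+B^{2}$ with $A,B\neq0$ and $(r,s,t)=(B,A/B,0)$; if $k=3$ then $z=A^{2}+B^{2}+C^{2}$ with $A,B,C\neq0$ and $(r,s,t)=(C,B/C,A/B)$. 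In each case the displayed identity gives $r^{2}(s^{2}(t^{2}+1)+1)=z$, and $r,s,t$ are rational because only divisions by nonzero rationals are performed.

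The argument is short, and the only real ``work'' is the bookkeeping of the degenerate cases in which one or two of the three squares vanish; the case split on $k$ dispatches this cleanly. Note that Lemmas~\ref{or} and~\ref{sierpinski} are not invoked here: the present lemma is a purely algebraic normal-form statement, and those results enter only later, when one wants to decide which rational numbers actually are sums of three rational squares.
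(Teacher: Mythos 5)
Your proof is correct and follows essentially the same route as the paper's: the same identity $r^{2}\bigl(s^{2}(t^{2}+1)+1\bigr)=r^{2}+(rs)^{2}+(rst)^{2}$, inverted by $r=C$, $s=B/C$, $t=A/B$, with a case analysis for vanishing squares. The paper merely compresses your four cases into two by first ordering $|a|\leqslant|b|\leqslant|c|$, so that only ``$b=0$'' versus ``$b\neq 0$'' need be distinguished.
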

\begin{proof}
Let \mbox{$H(r,s,t)=r^2\left(s^2\left(t^2+1\right)+1\right)$}. Of course,
\[
H(r,s,t)=r^2+(rs)^2+(rst)^2
\]
We prove that for each rational numbers $a$, $b$, $c$ there exist rational numbers $r$, $s$, $t$ such
that \mbox{$a^2+b^2+c^2=H(r,s,t)$}. Without loss of generality we can assume that
\mbox{$|a| \leqslant |b| \leqslant |c|$}. If \mbox{$b=0$}, then \mbox{$a=0$} and \mbox{$a^2+b^2+c^2=H(c,0,0)$}.
If \mbox{$b \neq 0$}, then \mbox{$c \neq 0$} and \mbox{$a^2+b^2+c^2=H\left(c,\frac{b}{c},\frac{a}{b}\right)$}.
\end{proof}
\begin{lemma}\label{bremner} (\cite[p.~125]{Bremner})
The equation \mbox{$x^3+y^3=4981$} has infinitely many solutions in positive rationals and each
such solution \mbox{$(x,y)$} satisfies \mbox{$h(x,y)>10^{\textstyle 16 \cdot 10^{6}}$}.
\end{lemma}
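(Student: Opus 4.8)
The plan is to pass to the elliptic curve attached to the Fermat-type cubic and to read off both assertions from its Mordell--Weil group. With $N=4981$, the classical substitution $X=\tfrac{12N}{x+y}$, $Y=\tfrac{36N(x-y)}{x+y}$ turns $x^3+y^3=N$ into
\[
E\colon\ Y^2=X^3-432N^2,
\]
with inverse $x=\tfrac{36N+Y}{6X}$, $y=\tfrac{36N-Y}{6X}$; since $N\neq 0$ and $x^2-xy+y^2$ is positive definite, this is a bijection between the affine rational points of $x^3+y^3=N$ and the points of $E(\mathbb{Q})$ other than the point at infinity (one checks $X\neq 0$ on $E$). The real locus $E(\mathbb{R})$ is connected, $E(\mathbb{R})\cong\mathbb{R}/\mathbb{Z}$, and under the bijection the \emph{positive} rational solutions correspond exactly to the rational points lying in the bounded, nonempty, open arc $\{X>0,\ |Y|<36N\}$.

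\emph{Infinitely many solutions.} For cube-free $N>2$ the group $E(\mathbb{Q})$ is torsion-free (the classical absence of ``exceptional'' rational points on $x^3+y^3=N$), and $4981=17\cdot 293$ is cube-free; hence $E(\mathbb{Q})$ is infinite as soon as $E$ has a single affine rational point, i.e.\ as soon as $x^3+y^3=4981$ has one rational solution, and then that point has infinite order. Since an infinite subgroup of $\mathbb{R}/\mathbb{Z}$ is dense, the rational points are dense in $E(\mathbb{R})$, so infinitely many of them fall in the open arc above, which yields infinitely many positive rational solutions. That $x^3+y^3=4981$ is solvable (equivalently, that $E$ has positive rank) can be obtained by exhibiting one --- necessarily large --- solution, or, cleanly, by checking $L(E,1)=0$ and $L'(E,1)\neq 0$ and invoking the theorem of Gross--Zagier and Kolyvagin, which moreover gives rank exactly $1$.

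\emph{The height bound.} This is the quantitative heart of the lemma. I would (i) prove $E(\mathbb{Q})$ has rank exactly $1$ and produce an explicit generator $P$ --- by a descent computation (a $2$-descent, or the $3$-descent furnished by the rational $3$-isogeny on $E$), aided if necessary by the analytic-rank input above and by a Heegner-point construction whose canonical height is given by the Gross--Zagier formula; (ii) use an \emph{effective} comparison between the naive logarithmic height of a point of $E$ and its N\'eron--Tate height $\hat h$ (Silverman's bounds, or the sharper Cremona--Prickett--Siksek bounds), together with $\hat h(nP)=n^2\hat h(P)$, to conclude that the naive height of $nP$ grows monotonically in $|n|$ once $|n|$ exceeds an explicit bound; (iii) determine, by a sign and region analysis, which small multiples $nP$ land in the positive arc $\{X>0,\ |Y|<36N\}$ --- a finite list; and (iv) compute $h(x,y)$ at those finitely many points from the explicit coordinates of $P$ and verify that the smallest of them already exceeds $10^{16\cdot 10^{6}}$. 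Because the birational map and its inverse have coefficients of bounded size, $h(x,y)$ and the height of the corresponding point of $E$ are linked by explicit inequalities, so the bound transfers.

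The main obstacle is step (i): for $N=4981$ the rank determination and, above all, the production of a \emph{provably correct} generator are genuinely hard, because the coordinates of $P$ already have on the order of $10^{7}$ decimal digits, so no search or direct point-enumeration can find it --- one must rely on descent (and quite likely on $L$-value / BSD-type information to certify that the rank is exactly $1$ and that no smaller generator is being missed). The secondary difficulty is keeping the height comparison in steps (ii)--(iii) fully explicit, including the region analysis that pins down exactly which small multiple of $P$ realises the minimal positive solution.
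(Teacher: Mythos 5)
The paper offers no proof of this lemma at all: it is imported verbatim from Bremner's article, and the citation is the entire argument. So the relevant comparison is with the method of the cited source, and there your outline is exactly right and standard: the substitution to $E\colon Y^2=X^3-432N^2$, the identification of the positive solutions with the open arc $\{X>0,\ |Y|<36N\}$ of the connected real locus, triviality of torsion for cube-free $N>2$, and density of an infinite subgroup of $E(\mathbb{R})\cong\mathbb{R}/\mathbb{Z}$ together give the infinitude of positive rational solutions once a single rational point is known. That part of your argument is essentially complete (modulo the classical torsion fact and the existence of one point, which itself already requires the rank computation you defer to step (i)).

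The difficulty is that the lemma's actual content is the explicit numerical bound $h(x,y)>10^{16\cdot 10^{6}}$, and your proposal does not establish it --- it describes a program for establishing it. Steps (i)--(iv) (certifying rank $1$, exhibiting a provably correct generator with coordinates of roughly $10^{7}$ digits, making the naive-versus-canonical height comparison explicit, and checking which multiples of the generator land in the positive arc) are precisely the computations Bremner performs, and none of them is carried out or even reduced to a finite verifiable claim in your write-up; you say as much yourself. For a lemma whose statement is a specific inequality about a specific equation, this is not a routine loose end but the whole theorem. As a review of strategy your proposal is sound and matches the source; as a proof it is incomplete, and one should either reproduce Bremner's computation or, as the paper does, simply cite it.
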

\begin{theorem}\label{april1}
There exists a system \mbox{${\cal T} \subseteq G_{28}$} such that ${\cal T}$
has infinitely many solutions in rationals \mbox{$x_1,\ldots,x_{28}$} and each
such solution \mbox{$(x_1,\ldots,x_{28})$} has height greater than \mbox{$2^{\textstyle 2^{27}}$}.
\end{theorem}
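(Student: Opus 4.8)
The plan is to build $\mathcal{T}$ from Bremner's equation $x^3+y^3=4981$ of Lemma~\ref{bremner}, which already supplies infinitely many rational solutions of large height; the $G_{28}$-system will be equivalent over $\Q$ to this equation, arranged so that $x^3$ and $y^3$ occur among its $28$ variables. The first thing to notice is that Lemma~\ref{bremner} by itself is not quite enough: $10^{16\cdot10^{6}}$ is \emph{smaller} than $2^{\textstyle 2^{27}}$, since $\log_{10}\bigl(2^{\textstyle 2^{27}}\bigr)=2^{27}\log_{10}2\approx 4.04\cdot10^{7}<48\cdot10^{6}$. Its cube, however, is large enough: $\bigl(10^{16\cdot10^{6}}\bigr)^{3}=10^{48\cdot10^{6}}>2^{\textstyle 2^{27}}$. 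Because $h(x^{3})=h(x)^{3}$ and $h(y^{3})=h(y)^{3}$ for rationals $x,y$, any tuple that records $x,y,x^{3},y^{3}$ has height at least $\max\bigl(h(x)^{3},h(y)^{3}\bigr)=h(x,y)^{3}$. So once $x^{3}$ and $y^{3}$ appear as coordinates of the tuple, every solution of $\mathcal{T}$ will have height $\geqslant h(x,y)^{3}>2^{\textstyle 2^{27}}$, and since $x^{3}+y^{3}=4981$ has infinitely many rational solutions, $\mathcal{T}$ will have infinitely many of them.

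For the construction I would introduce variables for $x,y,x^{2},x^{3},y^{2},y^{3}$ (the products that define them are already of the form $\alpha\cdot\beta=\gamma$), then build the constant $4981$ economically, for instance via $4981=17^{3}+4\cdot17$: obtain $1$ and $2$ from the identity $x=1\Longleftrightarrow\bigl(x\cdot x=x\wedge x\cdot(x+1)=x+1\bigr)$ used in the proof of Lemma~\ref{lem4}, then $4,16,17$ by two squarings and a ``$+1$'', then $17^{2},17^{3}$ and $68=4\cdot17$, then one addition $17^{3}+68=4981$; and finally impose $x^{3}+y^{3}=4981$. The only primitive not already available in $G_{n}$ is ordinary addition, and for each of the two additions I would use the inexpensive elimination from the proof of Lemma~\ref{april}: ``$u+v=w$'' is realized over $\Q$ by $\{\,t\cdot v=u,\ t+1=w',\ w'\cdot v=w,\ v+1=s,\ r\cdot v=s\,\}$ \emph{provided} $v\neq 0$, costing only four new variables. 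The proviso is harmless, because no rational solution of $x^{3}+y^{3}=4981$ has $x=0$ or $y=0$ (either would force the other cube to equal $4981$, and $\sqrt[3]{4981}\notin\Q$), so the quantities $x,y,x^{3},y^{3}$ and $68$ never vanish. A straightforward count — six variables for $x,y$ and their powers, about a dozen for $4981$ and its addition block, four more for the addition $x^{3}+y^{3}=4981$, with any leftover indices unused and free — keeps the total at most $28$.

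To finish, one checks the two required properties. Infinitude: for any rational solution $(x,y)$ of $x^{3}+y^{3}=4981$ — infinitely many by Lemma~\ref{bremner} — all auxiliary variables are \emph{uniquely} determined (the powers of $x$ and $y$, the constant $4981$, and the quotient-variables $t,r$ of the addition blocks, which exist and are unique precisely because their divisors are the nonzero quantities $v$), so the pair extends to a solution of $\mathcal{T}$, and distinct pairs give distinct solutions. Height: every solution of $\mathcal{T}$ restricts to a rational solution of $x^{3}+y^{3}=4981$, so by Lemma~\ref{bremner} its $(x,y)$-part has height $>10^{16\cdot10^{6}}$, whence — since $x^{3}$ and $y^{3}$ are coordinates of the tuple — the whole tuple has height $\geqslant h(x,y)^{3}>2^{\textstyle 2^{27}}$.

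I expect the main obstacle to be the variable accounting. For $n=28$ the conjectural bound is only $2^{\textstyle 2^{26}}$, so to force height past $2^{\textstyle 2^{27}}$ with just $28$ variables leaves essentially no slack, and the whole game is to pick the cheapest encodings available: the Lemma~\ref{april}-style addition instead of the twenty-variable encoding of Lemma~\ref{lem3}, and a short ``multiplicative part plus one addition'' formula for $4981$ instead of building it from $1$ term by term. A secondary subtlety, which the nonzero-branch addition handles automatically, is to ensure that the passage to $G_{28}$ creates no spurious low-height solutions: every solution of $\mathcal{T}$ must come from a genuine rational point on Bremner's curve, and one needs that all such points — not only the manifestly positive ones — have enormous height, which is the content of Lemma~\ref{bremner} together with the determination of the Mordell–Weil group of this curve.
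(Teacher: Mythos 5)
Your overall strategy (cube the Bremner bound by exposing $x^3$ and $y^3$ as coordinates, since $10^{16\cdot10^6}<2^{\textstyle 2^{27}}<10^{48\cdot10^6}$) matches the paper's, and your arithmetic there is correct. But there is a genuine gap at exactly the point you flag in your last sentence and then wave away: Lemma~\ref{bremner} bounds the heights only of the solutions of $x^3+y^3=4981$ in \emph{positive} rationals. Your system ${\cal T}$ is equivalent over $\Q$ to the bare equation $x^3+y^3=4981$, so it also picks up every rational point with $x<0$ or $y<0$, and the lemma says nothing about the heights of those. This is not a formality: for these ``prodigious'' cubic twists the standard mechanism is that the Mordell--Weil generators are points of comparatively modest height lying outside the positive quadrant, and only a large multiple of a generator lands inside it, so small mixed-sign points are to be expected rather than excluded. ``The determination of the Mordell--Weil group of this curve'' is neither the content of Lemma~\ref{bremner} nor available anywhere else in the paper. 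As written, your argument establishes only that each solution of ${\cal T}$ whose $(x,y)$-part is positive has height greater than $2^{\textstyle 2^{27}}$.

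The paper closes exactly this hole, and that is what Lemmas~\ref{sierpinski}, \ref{or} and \ref{technical} are for. It adjoins to the curve equation the condition that $\frac{x^3}{y^3}$ (Case 1) or $2\cdot\frac{x^3}{y^3}$ (Case 2) equals $r^2\left(s^2\left(t^2+1\right)+1\right)=r^2+(rs)^2+(rst)^2$: a sum of three rational squares is non-negative, so $x$ and $y$ have the same sign, and together with $x^3+y^3=4981>0$ both are positive, which puts every solution of ${\cal T}$ within the scope of Lemma~\ref{bremner}. Lemma~\ref{or} guarantees that for each positive rational $\rho$ at least one of $\rho^3$, $2\rho^3$ is a sum of three rational squares, so at least one of the two cases retains infinitely many solutions; the price is a non-constructive disjunction (one knows that one of the two systems works, not which). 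Note also that the paper avoids your addition gadgets entirely: it encodes $x^3+y^3=4981$ as $\left(\frac{x^3}{y^3}+1\right)\cdot y^3=4981$ and builds $4981=17\cdot 293$ with $293=4\cdot\left(\frac{289}{4}+1\right)$, using only equations of the two permitted forms. That economy is what leaves room for the nine extra variables $t$, $t^2$, $t^2+1$, $s$, $s^2$, $s^2(t^2+1)$, $s^2(t^2+1)+1$, $r$, $r^2$ within the budget of $28$; your count, which spends variables on two addition blocks, leaves no room for any positivity certificate, so the repair is not merely a matter of appending one.
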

\begin{proof}
We define:
\[
\Omega=\Bigl\{\rho \in \Q \cap (0,\infty): \exists y \in \Q ~\left(\rho \cdot y\right)^3+y^3=4981\Bigr\}
\]
Let $\Omega_1$ denote the set of all positive rationals $\rho$ such that the system
\begin{displaymath}
\left\{
\begin{array}{rcl}
\left(\rho \cdot y \right)^3+y^3&=& 4981 \\
\rho^3 &=& a^2+b^2+c^2
\end{array}
\right.
\end{displaymath}
is solvable in rationals. Let $\Omega_2$ denote the set of all positive rationals $\rho$ such that the system
\begin{displaymath}
\left\{
\begin{array}{rcl}
\left(\rho \cdot\ y \right)^3+y^3&=& 4981 \\
2\rho^3 &=& a^2+b^2+c^2
\end{array}
\right.
\end{displaymath}
is solvable in rationals. Lemma~\ref{bremner} implies that the set $\Omega$ is infinite.
By Lemma~\ref{or}, \mbox{$\Omega=\Omega_1 \cup \Omega_2$}. Therefore,
$\Omega_1$ is infinite (Case 1) or $\Omega_2$ is infinite (Case 2).
\par
Case 1. In this case the system
\begin{displaymath}
\left\{
\begin{array}{rcl}
x^3+y^3&=& 4981 \\
\frac{x^3}{y^3} &=& a^2+b^2+c^2
\end{array}
\right.
\end{displaymath}
has infinitely many rational solutions. By this
and Lemma~\ref{technical}, the system
\begin{displaymath}
\left\{
\begin{array}{rcl}
x^3+y^3 &=& 4981 \\
\frac{x^3}{y^3} &=& r^2\left(s^2\left(t^2+1\right)+1\right)
\end{array}
\right.
\end{displaymath}
has infinitely many rational solutions.
We transform the above system into an equivalent system \mbox{${\cal T} \subseteq G_{27}$} in such a way
that the variables \mbox{$x_1,\cdots,x_{27}$} correspond to the following rational
expressions:
\vskip 0.2truecm
\centerline{$x$,~~$y$,~~$x^2$,~~$x^3$,~~$y^2$,~~$y^3$,~~$\frac{x^3}{y^3}$,~~$\frac{x^3}{y^3}+1$,}
\vskip 0.2truecm
\centerline{$1$,~~$2$,~~$4$,~~$16$,~~$17$,~~$289$,~~$\frac{289}{4}$,~~$\frac{289}{4}+1$,~~$293$,~~$4981$,}
\vskip 0.2truecm
\centerline{$t$,~~$t^2$,~~$t^2+1$,~~$s$,~~$s^2$,~~$s^2(t^2+1)$,~~$s^2(t^2+1)+1$,~~$r$,~~$r^2$.}
\vskip 0.2truecm
\noindent
The system ${\cal T}$ has infinitely many solutions in rationals \mbox{$x_1,\ldots,x_{27}$}.
Lemma~\ref{bremner} implies that each rational tuple \mbox{$(x_1,\ldots,x_{27})$} that solves ${\cal T}$ satisfies
\[
h\left(x_{1},\ldots,x_{27}\right) \geqslant h\left(x_{1}^{3}, x_{2}^{3}\right)=
\Big(h\left(x_{1},x_{2}\right)\!\Big)^{3}>10^{\textstyle 48 \cdot 10^{6}} > 2^{\textstyle 2^{27}}
\]
Since \mbox{$G_{27} \subseteq G_{28}$}, \mbox{${\cal T} \subseteq G_{28}$} and the proof for Case 1 is complete.
\par
Case 2. In this case the system
\begin{displaymath}
\left\{
\begin{array}{rcl}
x^3+y^3&=& 4981 \\
2 \cdot \frac{x^3}{y^3} &=& a^2+b^2+c^2
\end{array}
\right.
\end{displaymath}
has infinitely many rational solutions. By this
and Lemma~\ref{technical}, the system
\begin{displaymath}
\left\{
\begin{array}{rcl}
x^3+y^3 &=& 4981 \\
2 \cdot \frac{x^3}{y^3} &=& r^2\left(s^2\left(t^2+1\right)+1\right)
\end{array}
\right.
\end{displaymath}
has infinitely many rational solutions.
We transform the above system into an equivalent system \mbox{${\cal T} \subseteq G_{28}$} in such a way
that the variables \mbox{$x_{1},\ldots,x_{28}$} correspond to the following rational expressions:
\vskip 0.2truecm
\centerline{$x$,~$y$,~$x^2$,~$x^3$,~$y^2$,~$y^3$,~$\frac{x^3}{y^3}$,~$2 \cdot\frac{x^3}{y^3}$,~$\frac{x^3}{y^3}+1$,}
\vskip 0.2truecm
\centerline{$1$,~$2$,~$4$,~$16$,~$17$,~$289$,~$\frac{289}{4}$,~$\frac{289}{4}+1$,~$293$,~$4981$,}
\vskip 0.2truecm
\centerline{$t$,~$t^2$,~$t^2+1$,~$s$,~$s^2$,~$s^2(t^2+1)$,~$s^2(t^2+1)+1$,~$r$,~$r^2$.}
\vskip 0.2truecm
\noindent
The system ${\cal T}$ has infinitely many solutions in rationals \mbox{$x_1,\ldots,x_{28}$}.
Lemma~\ref{bremner} implies that each rational tuple \mbox{$(x_1,\ldots,x_{28})$} that solves ${\cal T}$ satisfies
\[
h\left(x_{1},\ldots,x_{28}\right) \geqslant h\left(x_{1}^{3}, x_{2}^{3}\right)=
\Big(h\left(x_{1},x_{2}\right)\!\Big)^{3}>10^{\textstyle 48 \cdot 10^{6}} > 2^{\textstyle 2^{27}}
\]
\end{proof}
\par
For a positive integer $n$, let \mbox{$\mu(n)$} denote the smallest positive integer $m$
such that each system \mbox{${\cal S} \subseteq G_n$} solvable in rationals \mbox{$x_1,\ldots,x_n$}
has a rational solution \mbox{$(x_1,\ldots,x_n)$} whose height is not greater than $m$.
Obviously, \mbox{$\mu(1)=1$}. Observation~\ref{obs0} implies that \mbox{$\mu(n) \geqslant 2^{\textstyle 2^{n-2}}$}
for every integer \mbox{$n \geqslant 2$}. Theorem~\ref{april1} implies that \mbox{$\mu(28)>2^{\textstyle 2^{27}}$}.
\begin{theorem}
The function \mbox{$\mu \colon \N \setminus \{0\} \to \N \setminus \{0\}$} is computable in the limit.
\end{theorem}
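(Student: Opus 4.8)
The plan is to prove that $\mu$ is computable in the limit by exhibiting a \emph{total} computable function $\nu\colon(\N\setminus\{0\})\times(\N\setminus\{0\})\to\N\setminus\{0\}$ such that for every positive integer~$n$ the sequence $\nu(n,0),\nu(n,1),\nu(n,2),\ldots$ is eventually constant with value $\mu(n)$; equivalently $\lim_{s\to\infty}\nu(n,s)=\mu(n)$. This is exactly the assertion that $\mu$ is computable in the limit, so it suffices to construct such a $\nu$ and prove the convergence.

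First I would record the finiteness that makes the construction possible. The set $G_n$ has $n^2+n^3$ elements, so there are only $2^{n^2+n^3}$ systems ${\cal S}\subseteq G_n$, and the complete list of them is effectively produced from~$n$. For a system ${\cal S}\subseteq G_n$ that is solvable in rationals, let $\mu({\cal S})$ be the least height of a rational solution of ${\cal S}$; since a solvable system has at least one solution, and that solution has some finite height, $\mu({\cal S})$ is a well-defined positive integer. Because the empty subsystem of $G_n$ is solved by $(0,\ldots,0)$, the value $1$ is always attained, and straight from the definition of $\mu$ we get $\mu(n)=\max\bigl(\{1\}\cup\{\mu({\cal S}):{\cal S}\subseteq G_n\text{ solvable in }\Q\}\bigr)$, a maximum over a finite set, hence finite. (Conjecture~\ref{con1} asserts the far sharper $\mu(n)\le 2^{2^{n-2}}$ for $n\ge2$; here only the unconditional finiteness of $\mu(n)$ is used, and Theorem~\ref{april1} shows the values can genuinely be large, e.g.\ $\mu(28)>2^{2^{27}}$.)

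Next I would define $\nu(n,s)$. Given $n$ and $s$, generate all systems ${\cal S}\subseteq G_n$. For each such ${\cal S}$, run the finite search over every rational tuple $(x_1,\ldots,x_n)$ with $h(x_1,\ldots,x_n)\le s$; there are finitely many such tuples, since each $x_i$ is a fraction $p/q$ in lowest terms with $|p|,|q|\le s$, and testing whether a concrete tuple satisfies the finitely many equations of ${\cal S}$ is a finite computation. If the search finds at least one solution of height $\le s$, let $m({\cal S},s)$ be the least height among the solutions found; otherwise set $m({\cal S},s)=0$. Put $\nu(n,s)=\max\bigl(\{1\}\cup\{m({\cal S},s):{\cal S}\subseteq G_n\}\bigr)$. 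Then $\nu$ is plainly total computable with values in $\N\setminus\{0\}$. For convergence: if ${\cal S}\subseteq G_n$ is solvable, then for every $s\ge\mu({\cal S})$ the stage-$s$ search discovers a minimal-height solution and, by minimality of $\mu({\cal S})$, discovers none of smaller height, so $m({\cal S},s)=\mu({\cal S})$; for $s<\mu({\cal S})$ we have $0\le m({\cal S},s)\le\mu({\cal S})$. If ${\cal S}$ is unsolvable then $m({\cal S},s)=0$ for all $s$. Consequently $\nu(n,s)\le\mu(n)$ for every $s$, with equality once $s\ge\mu(n)$ (and $\nu(n,\cdot)$ is nondecreasing), so $\lim_{s\to\infty}\nu(n,s)=\mu(n)$.

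There is no genuine obstacle in this argument; the one point worth flagging is \emph{why} only a limit computation, and not an outright one, is claimed. Evaluating $\mu(n)$ directly would require deciding which systems ${\cal S}\subseteq G_n$ are solvable over~$\Q$, which is an instance of Hilbert's Tenth Problem for~$\Q$ and is not known to be possible. The limit formulation sidesteps this: the lower approximation $\nu(n,s)$ obtained by searching for solutions of growing height is unconditional, and it necessarily stabilizes precisely because $G_n$ is finite and every solvable subsystem has a minimal-height solution. Thus the only care needed is to resist the temptation to try to decide solvability, and instead let the bounded search and the finiteness of $G_n$ do the work.
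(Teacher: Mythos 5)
Your proof is correct and follows essentially the same approach as the paper: the paper's Flowchart~2 likewise performs an exhaustive search over rational tuples of increasing height bound (coded by integer tuples in $[-w,w]$) for each of the finitely many systems ${\cal S} \subseteq G_n$, emitting a stabilizing sequence of approximations to $\mu(n)$. Your write-up makes explicit the approximating function $\nu(n,s)$ and the convergence argument that the paper leaves to the flowchart.
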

\begin{proof}
Let us agree that the empty tuple has height $0$.
For a positive integer $w$ and a tuple
\[
\left(x_{1},\ldots,x_{n}\right) \in \left([-w,w] \cap \Z\right)^{n} \setminus \big\{(\underbrace{w,\ldots,w}_{n-\text{times}})\big\}
\]
let \mbox{${\rm succ}\left(\left(x_{1},\ldots,x_{n}\right),w\right)$} denote the successor of \mbox{$\left(x_{1},\ldots,x_{n}\right)$}
in the co-lexicographic order on \mbox{$\left([-w,w] \cap \Z\right)^{n}$}.
Flowchart 2 illustrates an infinite-time computation of \mbox{$\mu(n)$}.
\begin{center}
\includegraphics[width=\hsize]{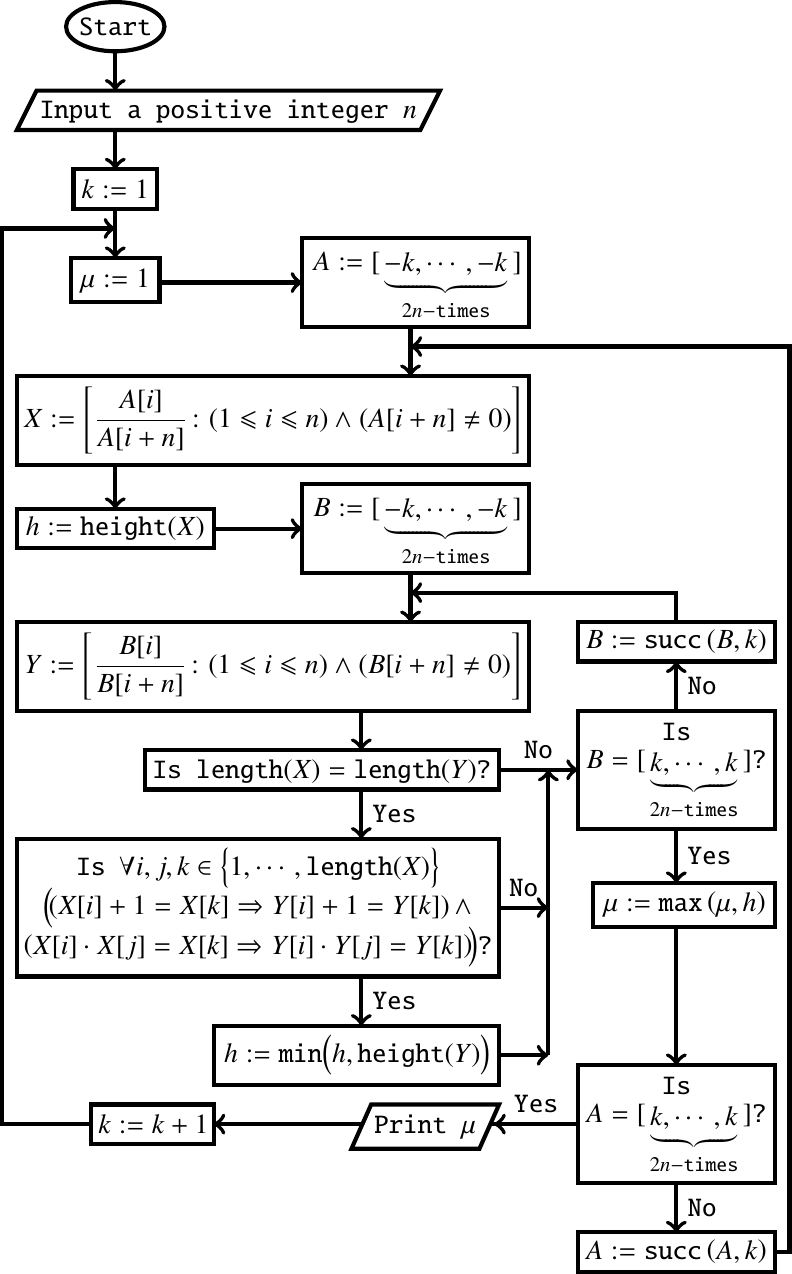}
\end{center}
\vskip 0.01truecm
\centerline{{\it Flowchart 2: An \mbox{infinite-time} computation of $\mu(n)$}}
\end{proof}
\section{{\bf Conjecture~\ref{con2} and its equivalent form}}
Let $[\cdot]$ denote the integer part function.
\begin{lemma}\label{lem22}
For every \mbox{non-negative} real numbers $x$ and $y$, \mbox{$x+1=y$} implies that
\mbox{$2^{\textstyle 2^{[x]}} \cdot 2^{\textstyle 2^{[x]}}=2^{\textstyle 2^{[y]}}$}.
\end{lemma}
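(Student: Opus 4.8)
The plan is to verify the identity by a direct computation with the laws of exponents, using the single arithmetic fact that the integer part function is translation-equivariant under integer shifts.

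First I would rewrite the left-hand side. Setting $a=2^{[x]}$, one has $2^{a}\cdot 2^{a}=2^{2a}$, and since $2a=2\cdot 2^{[x]}=2^{[x]+1}$, this gives
\[
2^{\textstyle 2^{[x]}}\cdot 2^{\textstyle 2^{[x]}}=2^{\textstyle 2^{[x]+1}}.
\]
Next I would handle the exponent $[y]$. Because $y=x+1$ and $1\in\Z$, the standard property $[x+m]=[x]+m$ for every $m\in\Z$ yields $[y]=[x+1]=[x]+1$. Substituting this into the right-hand side gives $2^{\textstyle 2^{[y]}}=2^{\textstyle 2^{[x]+1}}$, which coincides with the expression obtained for the left-hand side, so the two sides are equal.

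I do not expect any genuine obstacle here: the argument is a two-line chain of equalities, and the non-negativity hypothesis on $x$ and $y$ is not actually needed for the equality itself (the relation $[x+1]=[x]+1$ holds for all real $x$); it is stated only to keep all quantities in the intended range and to match the way Lemma~\ref{lem22} will be invoked. The one point worth stating explicitly is the identity $[x+1]=[x]+1$, which I would simply record as a well-known property of $[\cdot]$.
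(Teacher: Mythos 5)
Your proof is correct and follows essentially the same route as the paper, whose entire proof is the single observation that $x+1=y$ implies $[x]+1=[y]$; you simply make explicit the routine exponent computation $2^{\textstyle 2^{[x]}}\cdot 2^{\textstyle 2^{[x]}}=2^{\textstyle 2^{[x]+1}}$ that the paper leaves implicit. Your remark that non-negativity is not needed for the identity is also accurate.
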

\begin{proof}
For every \mbox{non-negative} real numbers $x$ and $y$, \mbox{$x+1=y$} implies that \mbox{$[x]+1=[y]$}.
\end{proof}
\par
Let \mbox{$f(1)=1$}, and let \mbox{$f(n+1)=2^{\textstyle 2^{\textstyle f(n)}}$} for every positive integer $n$.
Let \mbox{$g(1)=0$}, and let \mbox{$g(n+1)=2^{\textstyle 2^{\textstyle g(n)}}$} for every positive integer $n$.
\begin{conjecture}\label{con2}
If a system \mbox{${\cal S} \subseteq G_n$}
has only finitely many solutions in \mbox{non-negative} rationals \mbox{$x_1,\ldots,x_n$},
then each such solution \mbox{$(x_1,\ldots,x_n)$} satisfies \mbox{$h(x_1,\ldots,x_n) \leqslant f(2n)$}.
\end{conjecture}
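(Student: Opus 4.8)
Conjecture~\ref{con2} is, as the abstract makes plain, genuinely open --- together with Conjecture~\ref{con1} it would settle the open problem of the introduction --- so what follows is an attack plan rather than a finished argument. The plan is to imitate the treatment already given to Conjecture~\ref{con1}: first replace Conjecture~\ref{con2} by an equivalent ``height-ascent'' statement, and then attempt to realise that ascent by an explicit, order-preserving transformation whose multiplicative behaviour is governed by Lemma~\ref{lem22} and the function $f$.

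First I would prove that Conjecture~\ref{con2} is equivalent to the following: for every $n$ and all non-negative rationals $x_1,\ldots,x_n$ with $h(x_1,\ldots,x_n)>f(2n)$ there are non-negative rationals $y_1,\ldots,y_n$ with $h(x_1,\ldots,x_n)<h(y_1,\ldots,y_n)$ such that for all $i,j,k$ one has $(x_i+1=x_k\Rightarrow y_i+1=y_k)$ and $(x_i\cdot x_j=x_k\Rightarrow y_i\cdot y_j=y_k)$. The implication ``ascent $\Rightarrow$ Conjecture~\ref{con2}'' is immediate: if some $\mathcal{S}\subseteq G_n$ had a non-negative rational solution of height exceeding $f(2n)$, iterating the ascent would produce infinitely many solutions of $\mathcal{S}$. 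For the converse, from a putative counterexample tuple I would take $\mathcal{S}$ to be the set of all members of $G_n$ that the tuple satisfies; the only delicate point is to certify that this $\mathcal{S}$ has just finitely many non-negative rational solutions, which one arranges by picking the tuple to be of minimal height among solutions of $\mathcal{S}$ --- exactly the bookkeeping already implicit in the equivalent form of Conjecture~\ref{con1}, and presumably what the present section carries out. The statement can then be packaged as an ``$\forall x\,\exists y\,\phi(x,y)$'' infinite-time computation in the style of Theorem~\ref{new2}.

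The substantive step is building the ascent. The only relations available in $G_n$ are translations $x_i+1=x_k$ and products $x_i\cdot x_j=x_k$, and these are rigid --- no nontrivial affine reparametrisation of $\Q$ preserves them all --- so extra height must come from ``re-reading'' the internal combinatorics of the tuple at a larger scale. Lemma~\ref{lem22} is the prototype of such a re-reading: the map $x\mapsto 2^{\textstyle 2^{[x]}}$ converts each relation $x+1=y$ into a squaring relation, and nesting re-readings of this kind produces the tower-type growth encoded by the recursions for $f$ and $g$. I would stratify the coordinates into levels according to how they are generated from $0$ and $1$ using addition-of-one and multiplication, and then define the $y_i$ level by level --- integer coordinates by a tower substitution of the $f$/Lemma~\ref{lem22} type, the remaining coordinates by the induced quotients --- so that every relation satisfied by the $x$'s is inherited by the $y$'s while the top level, hence the height, strictly increases. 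The factor $2n$ rather than $n$ in the bound should fall out of the count of how much a level can grow when a single variable appears both as a summand and as a factor, the exponents being calibrated against constructions obtained by adapting Theorem~\ref{april1} to the non-negative setting.

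The hard part is precisely this last step, and I do not expect the naive version above to go through. There is no known description of which non-negative rational tuples arise as isolated solutions of $G_n$-systems, and showing that isolation forces every coordinate to have ``generation depth'' at most $\sim 2n$ --- which is what the bound $f(2n)$ encodes --- is of roughly the same difficulty as proving effective finiteness theorems over $\Q$ in general. So the realistic deliverable is the reformulation of the first two steps, together with the evidence the paper already assembles for the analogous Conjecture~\ref{con1} (tightness of the bound, computability in the limit of the gap function $\mu$, the lower bound $\mu(28)>2^{\textstyle 2^{27}}$) and the parallel lower-bound constructions for the non-negative domain, with the core of Conjecture~\ref{con2} left open.
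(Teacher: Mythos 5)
The statement is an open conjecture that the paper does not prove; the paper only records the equivalent height-ascent formulation (packaged as an infinite-time computation in Theorem~\ref{fracja}) and justifies the bound $f(2n)$ heuristically via Observations~\ref{obs3} and~\ref{obs4}, which rest on Lemma~\ref{lem22}. Your proposal correctly recognizes this and, in its first two steps, reproduces essentially the same reformulation and the same Lemma~\ref{lem22}-based evidence, so it matches the paper's treatment as far as that treatment goes.
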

\par
Observations~\ref{obs3} and \ref{obs4} justify Conjecture~\ref{con2}.
\begin{observation}\label{obs3}
For every system \mbox{${\cal S} \subseteq G_n$} which involves all the variables \mbox{$x_1,\ldots,x_n$},
the following new system
\[
{\cal S} \cup \left\{2^{\textstyle 2^{\textstyle [x_k]}}=y_k:~k \in \{1,\ldots,n\}\right\} \cup
\bigcup_{x_i+1=x_k \in {\cal S}} \left\{y_i \cdot y_i=y_k\right\}
\]
is equivalent to ${\cal S}$. If the system ${\cal S}$ has only finitely many solutions in \mbox{non-negative}
rationals \mbox{$x_1,\ldots,x_n$}, then the new system has only finitely many solutions in \mbox{non-negative}
rationals \mbox{$x_1,\ldots,x_n,y_1,\ldots,y_n$}.
\end{observation}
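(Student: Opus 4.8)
The plan is to establish both assertions of Observation~\ref{obs3} simultaneously, by showing that passing from ${\cal S}$ to the enlarged system amounts to adjoining variables whose values are completely determined by $x_1,\ldots,x_n$. Concretely, I would prove that the map $(x_1,\ldots,x_n)\mapsto\bigl(x_1,\ldots,x_n,\,2^{2^{[x_1]}},\ldots,2^{2^{[x_n]}}\bigr)$ is a bijection from the set of non-negative rational solutions of ${\cal S}$ onto the set of non-negative rational solutions of the enlarged system, with inverse the projection that forgets $y_1,\ldots,y_n$.

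First I would observe that for a non-negative rational $x_k$ the integer part $[x_k]$ is a non-negative integer, so $2^{2^{[x_k]}}$ is a positive integer; hence the equation $2^{2^{[x_k]}}=y_k$ determines $y_k$ uniquely from $x_k$ and forces $y_k$ to be a non-negative rational. Thus, if $(x_1,\ldots,x_n)$ is a non-negative rational solution of ${\cal S}$, the middle block of the enlarged system admits the single extension $y_k:=2^{2^{[x_k]}}$ for $k\in\{1,\ldots,n\}$, and it stays in the non-negative rationals. Next I would verify that this extension automatically satisfies the last block: if $x_i+1=x_k$ is an equation of ${\cal S}$, then $x_i+1=x_k$ holds between non-negative reals, so Lemma~\ref{lem22} yields $2^{2^{[x_i]}}\cdot 2^{2^{[x_i]}}=2^{2^{[x_k]}}$, that is, $y_i\cdot y_i=y_k$. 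Hence the displayed tuple solves the enlarged system.

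Conversely, any solution of the enlarged system satisfies every equation of ${\cal S}$ (because ${\cal S}$ is contained in it), so its first $n$ coordinates solve ${\cal S}$; and the middle block forces its last $n$ coordinates to equal $2^{2^{[x_k]}}$. The extension map and the projection are therefore mutually inverse, which is exactly the claimed equivalence of the two systems. Since the correspondence is a bijection between the non-negative rational solution sets, ${\cal S}$ has only finitely many non-negative rational solutions if and only if the enlarged system does, which is the second assertion.

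I do not expect a genuine obstacle: the whole content is a bookkeeping check that the adjoined equations create no new solution and kill no old one, and this is delivered by the uniqueness of the integer part together with Lemma~\ref{lem22}. The hypothesis that ${\cal S}$ involves all of $x_1,\ldots,x_n$ is needed only to make each equation $2^{2^{[x_k]}}=y_k$ a legitimate ingredient of the construction, and plays no further role. It is worth noting that the enlarged system is not yet a subsystem of $G_{2n}$, since the equations $2^{2^{[x_k]}}=y_k$ are not of the forms $\alpha+1=\gamma$ or $\alpha\cdot\beta=\gamma$; turning it into such a system (the task of Observation~\ref{obs4}) is a separate, subsequent step.
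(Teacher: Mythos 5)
Your proposal is correct and is exactly the argument the paper intends: its entire proof of Observation~\ref{obs3} is ``It follows from Lemma~\ref{lem22},'' and you have simply spelled out that the equations $2^{\textstyle 2^{\textstyle [x_k]}}=y_k$ determine the $y_k$ uniquely as non-negative rationals, that Lemma~\ref{lem22} makes the adjoined equations $y_i \cdot y_i = y_k$ hold automatically, and that the resulting bijection between solution sets preserves finiteness. No gaps.
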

\begin{proof}
It follows from Lemma~\ref{lem22}.
\end{proof}
\begin{observation}\label{obs4}
For every positive integer $n$, the following system
\[
\left\{\begin{array}{rcl}
x_1 \cdot x_1 &=& x_1 \\
\forall i \in \{1,\ldots,n-1\} ~2^{\textstyle 2^{\textstyle [x_i]}} &=& x_{i+1} ~({\rm if~} n>1)
\end{array}\right.
\]
has exactly two solutions in \mbox{non-negative} rationals,
namely \mbox{$(g(1),\ldots,g(n))$}
and \mbox{$(f(1),\ldots,f(n))$}. The second solution has greater height.
\end{observation}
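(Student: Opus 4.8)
The plan is to pin down the first coordinate and then propagate the recursion deterministically. First I would observe that over the non-negative rationals the equation $x_1 \cdot x_1 = x_1$ is equivalent to $x_1(x_1-1)=0$, so $x_1 \in \{0,1\} = \{g(1),f(1)\}$. Next, for each $i \in \{1,\ldots,n-1\}$ the equation $2^{\textstyle 2^{\textstyle [x_i]}} = x_{i+1}$ expresses $x_{i+1}$ as a function of $x_i$ alone; hence any solution of the whole system is completely determined by its first coordinate, and the system has at most two solutions in non-negative rationals.

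The second step is to verify that both candidate tuples genuinely satisfy the system. Taking $x_1 = g(1) = 0$, I would show by induction on $i$ that the recursion forces $x_i = g(i)$: each $g(i)$ is a non-negative integer, so $[g(i)] = g(i)$ and therefore $x_{i+1} = 2^{\textstyle 2^{\textstyle [g(i)]}} = 2^{\textstyle 2^{\textstyle g(i)}} = g(i+1)$; since every coordinate is a non-negative integer, $(g(1),\ldots,g(n))$ lies in ${\Q}^n$ with all entries non-negative and solves the system. The identical argument starting from $x_1 = f(1) = 1$, again using that each $f(i) \in \N$ so $[f(i)] = f(i)$, shows $(f(1),\ldots,f(n))$ is a solution. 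The two solutions are distinct because their first coordinates are $0$ and $1$; hence there are exactly two.

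Finally I would compare heights. For a non-negative integer $m$ we have $h(m) = \max(m,1)$, so the height of either of our tuples equals the maximum of its coordinates as soon as that maximum is at least $1$. A one-line induction gives $f(i) > g(i)$ for every $i$ (base case $f(1) = 1 > 0 = g(1)$; the step uses that $t \mapsto 2^{\textstyle 2^{\textstyle t}}$ is strictly increasing on $[0,\infty)$), and both $(f(i))_i$ and $(g(i))_i$ are increasing, so for $n \geqslant 2$ one obtains $h(f(1),\ldots,f(n)) = f(n) > g(n) = h(g(1),\ldots,g(n))$. For $n=1$ the two solutions $(0)$ and $(1)$ both have height $1$, so the strict comparison in the statement is the assertion for $n \geqslant 2$.

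I do not expect a genuine obstacle: the recursion is single-valued, so the combinatorics is trivial. The only places to tread carefully are ruling out spurious roots of $x_1 \cdot x_1 = x_1$ outside $\{0,1\}$ over the non-negative rationals, noting that the integer-part function is harmless because along both orbits every coordinate is already an integer (so $[x_i]=x_i$), and the easily overlooked fact that $h(0)=1$, which is what makes the height comparison come down to comparing $f(n)$ with $g(n)$.
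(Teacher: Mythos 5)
Your proof is correct, and it is the natural argument; the paper states this as an Observation with no proof at all, so there is nothing to diverge from. Your handling of the three steps (roots of $x_1\cdot x_1=x_1$ over $\Q\cap[0,\infty)$, the single-valued recursion $x_{i+1}=2^{\textstyle 2^{\textstyle [x_i]}}$ forcing the two orbits, and the harmlessness of $[\cdot]$ because both orbits consist of integers) is exactly what is needed. Your remark about $n=1$ is a genuine catch: since $h(0)=\max(0,1)=1=h(1)$, the final sentence of the Observation is literally false for $n=1$ and should be read as a claim for $n\geqslant 2$ only; the paper does not flag this.
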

\par
Conjecture~\ref{con2} is equivalent to the following conjecture on the
arithmetic of \mbox{non-negative} rationals:
if \mbox{non-negative} rational numbers \mbox{$x_1,\ldots,x_n$} satisfy \mbox{$h(x_1,\ldots,x_n)>f(2n)$},
then there exist \mbox{non-negative} rational numbers \mbox{$y_1,\ldots,y_n$} such that
\[
h(x_1,\ldots,x_n)<h(y_1,\ldots,y_n)
\]
and for every \mbox{$i,j,k \in \{1,\ldots,n\}$}
\[
(x_i+1=x_k \Longrightarrow y_i+1=y_k) ~\wedge (x_i \cdot x_j=x_k \Longrightarrow y_i \cdot y_j=y_k)
\]
\begin{theorem}\label{fracja}
Conjecture~\ref{con2} is true if and only if the execution of Flowchart 3 prints infinitely many numbers.
\end{theorem}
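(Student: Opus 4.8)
The plan is to imitate the proof of Theorem~\ref{new2} line for line, changing only the codomain of the enumerating function and the bound. First I would record the ``ascending-witness'' reformulation of Conjecture~\ref{con2} stated immediately above: the conjecture holds if and only if, for every positive integer $n$ and every tuple $(x_1,\ldots,x_n)$ of non-negative rationals with $h(x_1,\ldots,x_n)>f(2n)$, there is a tuple $(y_1,\ldots,y_n)$ of non-negative rationals with $h(x_1,\ldots,x_n)<h(y_1,\ldots,y_n)$ such that $(x_i+1=x_k \Rightarrow y_i+1=y_k)\wedge(x_i\cdot x_j=x_k \Rightarrow y_i\cdot y_j=y_k)$ for all $i,j,k\in\{1,\ldots,n\}$. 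The whole content of the theorem is that Flowchart~3 searches exhaustively for such witnesses.

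Next I would describe Flowchart~3 in these terms. Let $\Gamma_3$ be the decidable, infinite set of integers $i\geqslant 2$ whose number of prime factors (counted with multiplicity) is divisible by $3$, and fix a computable surjection $\eta\colon\Gamma_3\to\bigcup_{n=1}^{\infty}(\Q\cap[0,\infty))^n$; such an $\eta$ exists because $\Q\cap[0,\infty)$ is computably enumerable and $\Gamma_3$ is infinite and decidable. Flowchart~3 runs through $i=2,3,4,\ldots$; for each $i\in\Gamma_3$ it forms $\eta(i)=(x_1,\ldots,x_n)$, and if $h(x_1,\ldots,x_n)\leqslant f(2n)$ it prints a number and proceeds to the next index, whereas if $h(x_1,\ldots,x_n)>f(2n)$ it dovetails through all tuples $(y_1,\ldots,y_n)\in(\Q\cap[0,\infty))^n$ looking for one of strictly greater height that satisfies every equation of $G_n$ satisfied by $(x_1,\ldots,x_n)$, printing $h(y_1,\ldots,y_n)$ and proceeding once such a witness is found. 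Because $\eta$ is onto and $\Gamma_3$ is infinite, ``Flowchart~3 prints infinitely many numbers'' is equivalent to ``every one of these witness searches terminates'', and by the reformulation above the latter is exactly ``Conjecture~\ref{con2} is true''. For the converse, if Conjecture~\ref{con2} fails then the reformulation yields a non-negative rational tuple with no ascending witness; by surjectivity of $\eta$ this tuple is $\eta(i)$ for some $i\in\Gamma_3$, the search started at that index never halts, and only finitely many numbers get printed.

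The routine verifications are that $\Gamma_3$ is decidable and infinite, that an explicit computable $\eta$ can be built from a standard enumeration of $\Q\cap[0,\infty)$ and a pairing function, and that the predicates ``$(x_1,\ldots,x_n)$ satisfies a prescribed equation of $G_n$'' and ``$h(y_1,\ldots,y_n)>h(x_1,\ldots,x_n)$'' are decidable, so each step of the dovetailed search is effective. The one point I would check with care --- though it is a matter of how the flowchart is wired rather than a mathematical obstacle --- is that a non-terminating witness search genuinely blocks all later printing, so that ``prints infinitely many numbers'' is equivalent to, and not merely implied by, ``every search halts''; this is exactly the subtlety already handled in the proof of Theorem~\ref{new2}, and the argument here is the same.
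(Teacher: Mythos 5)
Your proposal is correct and is essentially the paper's own argument, which simply observes that Flowchart~3 applies a computable surjection from a decidable infinite set of integers onto $\bigcup_{n=1}^\infty\left(\Q\cap[0,\infty)\right)^n$ and searches for the ascending witnesses of the equivalent reformulation of Conjecture~\ref{con2}; your write-up just spells out the witness-search semantics that the paper leaves implicit. The only cosmetic discrepancy is that the paper's Flowchart~3 indexes tuples by $\Gamma_2$ (integers $i\geqslant 2$ whose number of prime factors is divisible by $2$) rather than by the set $\Gamma_3$ used for Flowchart~1, but this choice of enumerating domain is immaterial to the argument.
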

\begin{center}
\includegraphics[width=\hsize]{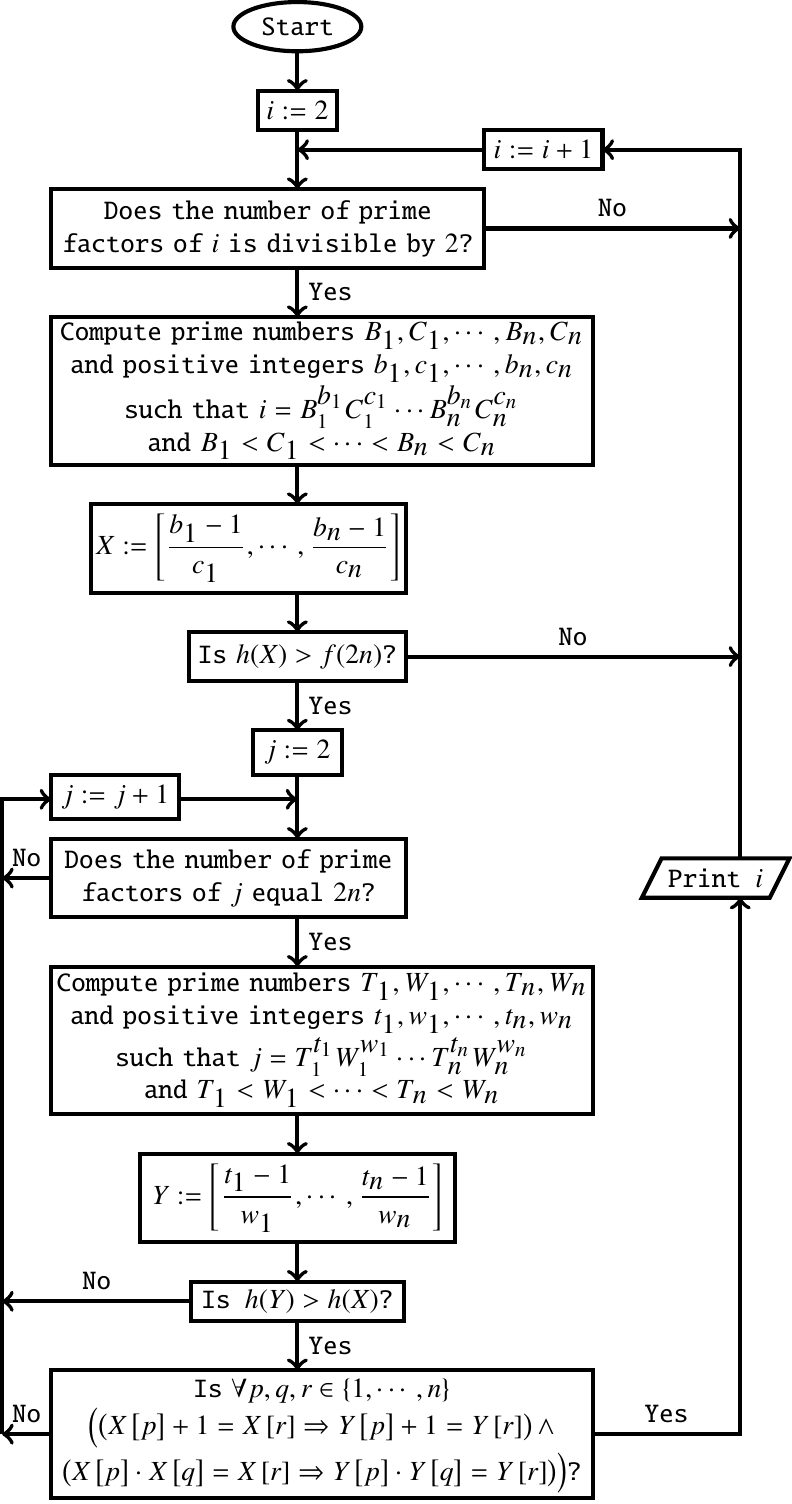}
\end{center}
\vskip 0.01truecm
\centerline{{\it Flowchart 3: An infinite-time computation which}}
\vskip 0.01truecm
\centerline{{\it decides whether or not Conjecture~\ref{con2} is true}}
\begin{proof}
Let $\Gamma_2$ denote the set of all integers \mbox{$i \geqslant 2$} whose number of prime factors is
divisible by $2$. The claimed equivalence is true because the algorithm from Flowchart 3
applies a surjective function from $\Gamma_2$ to \mbox{$\bigcup_{n=1}^\infty\limits \left({\Q} \cap [0,\infty)\right)^n$}.
\end{proof}
\begin{corollary}
Conjecture~\ref{con2} can be written in the form
\mbox{$\forall x \in \N ~\exists y \in \N ~\psi(x,y)$},
where \mbox{$\psi(x,y)$} is a computable predicate.
\end{corollary}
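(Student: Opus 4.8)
The plan is to derive the Corollary directly from Theorem~\ref{fracja}, in the same way the corollary following Theorem~\ref{new2} is obtained from Theorem~\ref{new2}. Theorem~\ref{fracja} reduces Conjecture~\ref{con2} to a statement about the deterministic infinite-time computation described by Flowchart~3, namely that this computation prints infinitely many numbers. Since the computation is carried out by a fixed algorithm, its run is a single, effectively given object, and the only remaining task is to express ``prints infinitely many numbers'' as a formula \mbox{$\forall x \in \N ~\exists y \in \N ~\psi(x,y)$} with a computable matrix~$\psi$.

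First I would fix, once and for all, a step-by-step simulator for the algorithm of Flowchart~3: for a non-negative integer $y$ this simulator performs $y$ elementary steps of the computation and returns the list of numbers printed so far. I then define $\psi(x,y)$ to hold if and only if, after $y$ steps of this simulation, at least $x$ numbers have been printed. Because executing a fixed algorithm for a prescribed finite number of steps and counting its outputs is an effective procedure that always terminates, $\psi$ is a computable (indeed primitive recursive) predicate of the two arguments $x$ and $y$.

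Next I would verify the equivalence of \mbox{$\forall x \in \N ~\exists y \in \N ~\psi(x,y)$} with the statement of Theorem~\ref{fracja}. The key point is monotonicity: the number of numbers printed after $y$ steps is non-decreasing in $y$, so \mbox{$\exists y ~\psi(x,y)$} asserts precisely that the computation eventually prints at least $x$ numbers. Hence \mbox{$\forall x ~\exists y ~\psi(x,y)$} holds if and only if for every $x$ the computation eventually prints $x$ numbers, which, since the total output is either finite or infinite, is exactly the assertion that it prints infinitely many numbers. Combining this with Theorem~\ref{fracja} shows that \mbox{$\forall x \in \N ~\exists y \in \N ~\psi(x,y)$} is equivalent to Conjecture~\ref{con2}, as claimed.

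I do not anticipate a genuine obstacle. The one point requiring care is the monotonicity argument in the previous paragraph, i.e.\ confirming that ``after $y$ steps at least $x$ numbers have been printed'' really does quantify out to ``infinitely many numbers are printed'' rather than merely ``at least $x$ numbers are printed at some single moment''; this is legitimate because printed numbers are never retracted. One should also note that the simulator is total, since it is asked to execute only finitely many steps regardless of whether the underlying infinite-time computation halts.
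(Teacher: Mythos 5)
Your proposal is correct and matches the paper's intended derivation: the corollary is stated as an immediate consequence of Theorem~\ref{fracja}, with $\psi(x,y)$ expressing that the first $y$ steps of the Flowchart~3 computation have already printed at least $x$ numbers. The monotonicity observation you flag is exactly the point that makes the $\forall\exists$ form equivalent to ``prints infinitely many numbers,'' so nothing is missing.
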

\section{{\bf Algebraic lemmas -- part 3}}
\begin{lemma}\label{lem6} (cf. \cite[p.~100]{Robinson})
For every \mbox{non-negative} real numbers \mbox{$x,y,z$}, \mbox{$x+y=z$} if and only if
\begin{equation}\label{equ11}
((z+1)x+1)((z+1)(y+1)+1)=(z+1)^2(x(y+1)+1)+1
\end{equation}
\end{lemma}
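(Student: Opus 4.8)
The plan is to obtain Lemma~\ref{lem6} as an immediate specialization of Lemma~\ref{lem2}, mirroring the way equation~(\ref{equ2}) was handled in the proof of Lemma~\ref{lem3}. Since $\mathbb{R}$ is a ring, Lemma~\ref{lem2} holds over $\mathbb{R}$. First I would apply it with $x$, $y$, $z$ replaced by $x$, $y+1$, $z+1$, respectively. This gives that
\[
(z+1)\bigl(x+(y+1)-(z+1)\bigr)=0
\]
holds if and only if equation~(\ref{equ11}) holds.

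Next I would simplify the left-hand side using $x+(y+1)-(z+1)=x+y-z$, so that the condition above becomes $(z+1)(x+y-z)=0$. At this point I would invoke the standing hypothesis that $x$, $y$, $z$ are non-negative reals: then $z+1\geqslant 1$, hence $z+1\neq 0$, and therefore $(z+1)(x+y-z)=0$ is equivalent to $x+y-z=0$, that is, to $x+y=z$. Chaining the two equivalences yields the lemma.

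I do not anticipate any real difficulty. The only delicate point is the non-vanishing of $z+1$, which is precisely why Lemma~\ref{lem3}, stated over an arbitrary ring, needed both equations~(\ref{equ1}) and~(\ref{equ2}), whereas over the non-negative reals a single equation suffices. Should one wish to avoid Lemma~\ref{lem2} altogether, one could instead expand both sides of~(\ref{equ11}) and verify the resulting polynomial identity by a direct (if tedious) calculation; but the substitution argument is shorter and reuses machinery already developed in the paper.
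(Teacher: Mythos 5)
Your argument is correct and coincides with the paper's: the paper's one-line proof simply states that the left side of~(\ref{equ11}) minus the right side equals $(z+1)(x+y-z)$, which is exactly the identity you extract from Lemma~\ref{lem2} via the substitution $y\mapsto y+1$, $z\mapsto z+1$, and then both arguments conclude from $z+1\neq 0$ for non-negative $z$. Your version merely makes the appeal to Lemma~\ref{lem2} and the non-vanishing of $z+1$ explicit.
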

\begin{proof}
The left side of equation~(\ref{equ11}) minus the right side of equation~(\ref{equ11}) equals \mbox{$(z+1)(x+y-z)$}.
\end{proof}
\begin{lemma}\label{lem7}
In \mbox{non-negative} rationals, the equation \mbox{$x+y=z$} is equivalent to a system
which consists of equations of the forms \mbox{$\alpha+1=\gamma$} and \mbox{$\alpha \cdot \beta=\gamma$}.
\end{lemma}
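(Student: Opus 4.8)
The plan is to invoke Lemma~\ref{lem6} to replace the single equation $x+y=z$ by the single polynomial equation~(\ref{equ11}), and then to introduce one fresh variable for every proper subterm occurring in that equation, each such variable being tied to the original $x,y,z$ and the previously introduced variables by an equation of one of the two permitted forms.

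Concretely, by Lemma~\ref{lem6}, for non-negative rationals $x,y,z$ we have $x+y=z$ if and only if equation~(\ref{equ11}) holds, that is,
\[
((z+1)x+1)((z+1)(y+1)+1)=(z+1)^2(x(y+1)+1)+1 .
\]
Writing $x=x_1$, $y=x_2$, $z=x_3$, I would introduce $11$ new non-negative rational variables $x_4,\ldots,x_{14}$ standing for the successive expressions
\[
z+1,\ (z+1)x,\ (z+1)x+1,\ y+1,\ (z+1)(y+1),\ (z+1)(y+1)+1,
\]
\[
(z+1)^2,\ x(y+1),\ x(y+1)+1,\ (z+1)^2(x(y+1)+1),\ (z+1)^2(x(y+1)+1)+1 ,
\]
each of which is obtained from $x_1,x_2,x_3$ and the earlier variables either by adding $1$ or by multiplying two of them; thus the eleven defining equations all have one of the forms $\alpha+1=\gamma$, $\alpha\cdot\beta=\gamma$. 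Finally I would append the single equation asserting that the variable for $(z+1)x+1$ times the variable for $(z+1)(y+1)+1$ equals the variable for $(z+1)^2(x(y+1)+1)+1$; since this is of the form $\alpha\cdot\beta=\gamma$, the whole system consists only of equations of the two allowed shapes. All the auxiliary values are sums of $1$ and products of non-negative rationals, hence non-negative, and each of them is uniquely determined by $x,y,z$; therefore, by Lemma~\ref{lem6}, the system is solvable in non-negative rationals in the variables $x_4,\ldots,x_{14}$ exactly when $x+y=z$, and its solution set projects onto $\{(x,y,z):x+y=z\}$ bijectively.

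I do not expect a real obstacle: once Lemma~\ref{lem6} is in hand, this is a routine term-unfolding. The only point needing a little care is the top-level equation~(\ref{equ11}) itself: one must arrange the parenthesisation so that the outermost operation on one side is a multiplication and on the other side an ``$+1$'', so that after naming all inner subterms the residual equation is already of one of the two sanctioned forms rather than a bare equality $\alpha=\beta$ (which is not permitted); the grouping written above does exactly this. Alternatively one could bypass Lemma~\ref{lem6} and argue directly as in the proofs of Lemmas~\ref{march} and~\ref{april}, but the route through Lemma~\ref{lem6} is shorter and keeps the number of new variables small.
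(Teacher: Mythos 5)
Your proposal is correct and is exactly the argument the paper intends: the paper's proof of Lemma~\ref{lem7} simply says ``It follows from Lemma~\ref{lem6},'' leaving implicit the routine term-unfolding that you carry out explicitly (and your list of eleven auxiliary subterms matches the corresponding half of the twenty listed in the proof of Lemma~\ref{lem3}). No discrepancy to report.
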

\begin{proof}
It follows from Lemma~\ref{lem6}.
\end{proof}
\begin{lemma}\label{lem9}
Let \mbox{$D(x_1,\ldots,x_p) \in {\Z}[x_1,\ldots,x_p]$}.
Assume that \mbox{${\rm deg}(D,x_i) \geqslant 1$} for each \mbox{$i \in \{1,\ldots,p\}$}. We can compute a positive
integer \mbox{$n>p$} and a system \mbox{${\cal T} \subseteq G_n$} which satisfies the following two conditions:
\vskip 0.2truecm
\noindent
{\tt Condition 6.} For every \mbox{non-negative} rationals \mbox{$\tilde{x}_1,\ldots,\tilde{x}_p$},
\[
D(\tilde{x}_1,\ldots,\tilde{x}_p)=0 \Longleftrightarrow
\]
\[
\exists \tilde{x}_{p+1},\ldots,\tilde{x}_n \in {\Q} \cap [0,\infty) ~(\tilde{x}_1,\ldots,\tilde{x}_p,\tilde{x}_{p+1},\ldots,\tilde{x}_n) ~solves~ {\cal T}
\]
{\tt Condition 7.} If \mbox{non-negative} rationals \mbox{$\tilde{x}_1,\ldots,\tilde{x}_p$} satisfy
\mbox{$D(\tilde{x}_1,\ldots,\tilde{x}_p)=0$}, then there exists a unique tuple
\mbox{$(\tilde{x}_{p+1},\ldots,\tilde{x}_n) \in {\left(\Q \cap [0,\infty)\right)}^{n-p}$} such that the tuple
\mbox{$(\tilde{x}_1,\ldots,\tilde{x}_p,\tilde{x}_{p+1},\ldots,\tilde{x}_n)$} solves~${\cal T}$.
\vskip 0.2truecm
\noindent
Conditions 6 and 7 imply that the equation \mbox{$D(x_1,\ldots,x_p)=0$} and the system ${\cal T}$ have
the same number of solutions in \mbox{non-negative} rationals.
\end{lemma}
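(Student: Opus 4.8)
The plan is to imitate the proof of Lemma~\ref{lem4}, working in the semiring $\Q\cap[0,\infty)$ instead of an arbitrary ring and using Lemma~\ref{lem7} in place of Lemma~\ref{lem3} to remove addition. The proof of Lemma~\ref{lem4} has three moves: (a) replace $D=0$ by a system $T\subseteq E_n$ via Lemma~\ref{lem1}; (b) eliminate the equations of the form $1=x_k$; (c) turn the equations of the form $x_i+x_j=x_k$ into systems of equations of the forms $\alpha+1=\gamma$ and $\alpha\cdot\beta=\gamma$. Moves (b) and (c) go through over $\Q\cap[0,\infty)$ almost verbatim --- (c) now using Lemma~\ref{lem7}, whose underlying system comes from Lemma~\ref{lem6}. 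The subtle move is (a): Lemma~\ref{lem1} is stated only for $\K\in{\cal R}{\sl ng}\cup\{\N,\N\setminus\{0\}\}$, and $\Q\cap[0,\infty)$ is a semiring outside that list, so I would redo move (a) from scratch for non-negative rationals.

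For move (a) I would split $D$ into its positive and negative parts: write $D=P-Q$ with $P,Q\in\N[x_1,\ldots,x_p]$ having no negative coefficients, so that for non-negative rationals $\tilde x_1,\ldots,\tilde x_p$ one has $D(\tilde x_1,\ldots,\tilde x_p)=0$ iff $P(\tilde x_1,\ldots,\tilde x_p)=Q(\tilde x_1,\ldots,\tilde x_p)$. Then I would introduce, with computably many new variables: a variable $u$ together with the equation $1=u$; variables for the powers $x_i^2,\ldots,x_i^{{\rm deg}(D,x_i)}$ built by equations $x_i\cdot x_i^{j-1}=x_i^j$; a variable for each monomial of $P$ or $Q$ built as an iterated product of those powers; a variable realising each natural-number coefficient as an iterated sum of copies of $u$; variables for the coefficient-times-monomial products; and, by iterated additions, two variables $a$ and $b$ holding the values $P(\tilde x_1,\ldots,\tilde x_p)$ and $Q(\tilde x_1,\ldots,\tilde x_p)$. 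Finally I would add the equation $a\cdot u=b$, which over $\Q\cap[0,\infty)$ is equivalent to $a=b$. The result is a system $T\subseteq E_n$ with $n>p$ computable from $D$. The biconditional of Condition~6 holds by construction; Condition~7 holds because every new variable is forced to equal a fixed polynomial evaluated at $\tilde x_1,\ldots,\tilde x_p$ (a power, a monomial, a partial sum, or the constant $1$), so given a non-negative rational zero of $D$ the extension to a solution of $T$ exists and is unique, and hence $D=0$ and $T$ have the same number of non-negative rational solutions. (One could instead observe that the construction behind Lemma~\ref{lem1} in \cite{Tyszka} uses only additions and multiplications with non-negative coefficients and therefore already applies to $\Q\cap[0,\infty)$, but checking this means inspecting that construction, whereas the argument above is self-contained.)

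Moves (b) and (c) then finish the proof. For (b): over $\Q\cap[0,\infty)$ one has $x=1$ iff $x\cdot x=x$ and $x\cdot(x+1)=x+1$ --- the value $0$ being ruled out because $0\cdot 1=0\neq 1$ --- so I would introduce a new variable $u'$ and replace $1=u$ by $u\cdot u=u$, $u+1=u'$, $u\cdot u'=u'$; since $u'$ is forced to be $2$, Conditions~6 and~7 are unaffected. For (c): to each remaining equation of the form $x_i+x_j=x_k$ I would apply Lemma~\ref{lem7}, substituting the explicit system that comes from Lemma~\ref{lem6} (its new variables $x_k+1$, $(x_k+1)x_i$, $(x_k+1)x_i+1$, $x_j+1$, $(x_k+1)(x_j+1)$, and so on are all fixed polynomials in $x_i,x_j,x_k$, just as in the proof of Lemma~\ref{lem3}), so that Conditions~6 and~7 survive this step too. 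What remains is a system that contains only equations of the forms $\alpha+1=\gamma$ and $\alpha\cdot\beta=\gamma$, i.e. a system ${\cal T}\subseteq G_n$ for the final computable value of $n$, and it satisfies Conditions~6 and~7.

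I expect the whole difficulty to be concentrated in move (a): the two things one must be careful about are (i) not losing solutions relative to the ring case --- this is why $D$ is split into non-negative parts and why $P(\tilde x)=Q(\tilde x)$ is enforced through the product equation $a\cdot u=b$ rather than through any subtraction --- and (ii) keeping the uniqueness of the extension intact through every rewrite, which is guaranteed by the single invariant that each newly introduced variable equals a polynomial in the originally given variables $x_1,\ldots,x_p$. Once that invariant is in place, everything else is a routine transcription of the proof of Lemma~\ref{lem4}.
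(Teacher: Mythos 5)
Your proof is correct and is, at its core, the paper's own argument: reduce $D=0$ to an equality of two expressions built only from non-negative-coefficient monomials and the constant $1$, realise every subexpression by a new variable, and use Lemma~\ref{lem7} (i.e.\ the identity of Lemma~\ref{lem6}, which holds for non-negative reals because the difference of its two sides is $(z+1)(x+y-z)$ and $z+1>0$) to eliminate the additions; Condition~7 follows in both versions from the invariant you state, that each new variable is a fixed polynomial expression in $\tilde x_1,\ldots,\tilde x_p$. The differences are organisational. The paper does not pass through Lemma~\ref{lem1} or $E_n$ at all: it replaces each coefficient of $D$ by the successor of its absolute value to obtain $\widetilde D$, rewrites $D=0$ as $D+\widetilde D+1=\widetilde D+1$, and expresses each side as a non-empty sum of coefficient-one monomials followed by a positive number of appended units, so the constants are produced directly by equations $\alpha+1=\gamma$ and no equation $1=x_k$ ever arises (your step (b) is therefore not needed in the paper's route). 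Your decomposition $D=P-Q$ is the same idea but, as written, has a degenerate case: if $D$ or $-D$ already has only non-negative coefficients, then $Q$ (respectively $P$) is the zero polynomial and there is no variable $b$ ``holding the value of the empty sum,'' nor any way to assert $a=0$ within $E_n\cup G_n$ without further work. Adding $1$ to both sides (or using the paper's $\widetilde D$ device, which guarantees both sides have strictly positive coefficients) repairs this in one line; with that repair your argument is complete.
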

\begin{proof}
We write down the polynomial \mbox{$D(x_1,\ldots,x_p)$} and replace each coefficient by the successor
of its absolute value. Let \mbox{$\widetilde{D}(x_1,\ldots,x_p)$} denote the obtained polynomial.
The polynomials \mbox{$D(x_1,\ldots,x_p)+\widetilde{D}(x_1,\ldots,x_p)$} and \mbox{$\widetilde{D}(x_1,\ldots,x_p)$}
have positive integer coefficients. The equation \mbox{$D(x_1,\ldots,x_p)=0$} is equivalent to
\[
D(x_1,\ldots,x_p)+\widetilde{D}(x_1,\ldots,x_p)+1=\widetilde{D}(x_1,\ldots,x_p)+1
\]
There exist a positive integer $a$ and a finite \mbox{non-empty} list $A$ such that
\[
D(x_1,\ldots,x_p)+\widetilde{D}(x_1,\ldots,x_p)+1=
\]
\begin{equation}\label{equ22}
\Bigl(\Bigl(\Bigl(\sum_{\textstyle (i_1,j_1,\ldots,i_k,j_k) \in A}
x_{\textstyle i_1}^{\textstyle j_1} \cdot \ldots \cdot x_{\textstyle i_k}^{\textstyle j_k}\Bigr)+
\underbrace{1\Bigr)+\ldots\Bigr)+1}_{\textstyle a~{\rm units}}
\end{equation}
and all the numbers \mbox{$k,i_1,j_1,\ldots,i_k,j_k$} belong to \mbox{$\N \setminus \{0\}$}.
There exist a positive integer $b$ and a finite \mbox{non-empty} list $B$ such that
\vskip 0.01truecm
\[
\widetilde{D}(x_1,\ldots,x_p)+1=
\]
\begin{equation}\label{equ33}
\Bigl(\Bigl(\Bigl(\sum_{\textstyle (i_1,j_1,\ldots,i_k,j_k) \in B}
x_{\textstyle i_1}^{\textstyle j_1} \cdot \ldots \cdot x_{\textstyle i_k}^{\textstyle j_k}\Bigr)+
\underbrace{1\Bigr)+\ldots\Bigr)+1}_{\textstyle b~{\rm units}}
\end{equation}
and all the numbers \mbox{$k,i_1,j_1,\ldots,i_k,j_k$} belong to \mbox{$\N \setminus \{0\}$}.
By Lemma~\ref{lem7}, we can equivalently express the equality of the right
sides of equations (\ref{equ22}) and (\ref{equ33}) using only equations of the forms
\mbox{$\alpha+1=\gamma$} and \mbox{$\alpha \cdot \beta=\gamma$}.
Consequently, we can effectively find the system ${\cal T}$.
\end{proof}
\begin{observation}\label{3maj}
Combining the above reasoning with Lemma~\ref{lem3} for \mbox{$\LL=\Q$}, we can prove Lemma~\ref{lem4} for \mbox{$\K=\Q$}.
\end{observation}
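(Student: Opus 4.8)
The plan is to run the construction in the proof of Lemma~\ref{lem9} with $\K=\Q$ throughout, using Lemma~\ref{lem3} (valid for $\LL=\Q$, indeed for every ring) in place of Lemma~\ref{lem7}, which was valid only in the \mbox{non-negative} case. First I would form the polynomial $\widetilde D(x_1,\ldots,x_p)$ by replacing every coefficient of $D(x_1,\ldots,x_p)$ with the successor of its absolute value, so that $D+\widetilde D$ and $\widetilde D$ have positive integer coefficients; since $\Q$ extends $\Z$, the equation $D=0$ is equivalent over $\Q$ to the equality of the right sides of~(\ref{equ22}) and~(\ref{equ33}), with finite \mbox{non-empty} lists $A$, $B$ and positive integers $a$, $b$ produced exactly as in Lemma~\ref{lem9}. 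Here the lists are \mbox{non-empty} because $\mathrm{deg}(D,x_i)\geqslant1$ forces $D+\widetilde D$ and $\widetilde D$ to have positive degree, and $a,b\geqslant1$ because both polynomials have a positive constant term.

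Next I would introduce new variables naming each power $x_i^{\,j}$ that occurs in a monomial of $A$ or $B$ (built by repeated multiplication), each monomial $x_{i_1}^{\,j_1}\cdot\ldots\cdot x_{i_k}^{\,j_k}$ of $A$ or $B$ (built by multiplying the relevant powers), and the successive partial sums of the two nested sums in~(\ref{equ22}) and~(\ref{equ33}). Powers and monomials use only equations of the form $\alpha\cdot\beta=\gamma$. Each addition needed to form a partial sum is replaced, by Lemma~\ref{lem3} for $\LL=\Q$, with a copy of the system ${\cal F}$ on $20$ fresh variables, all of whose equations have the forms $\alpha+1=\gamma$ and $\alpha\cdot\beta=\gamma$; this is the step that genuinely needs the \emph{two}-equation encoding, since the single-equation encoding of Lemma~\ref{lem6} also holds when the variable playing the role of $z$ equals $-1$ and would therefore introduce spurious rational solutions. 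The trailing additions of $1$ in~(\ref{equ22}) and~(\ref{equ33}) already have the form $\alpha+1=\gamma$, and I would have the last such addition in the chain for~(\ref{equ22}) and the last such addition in the chain for~(\ref{equ33}) assign to one and the same new variable $v$: a tuple satisfies both of these two equations exactly when the right sides of~(\ref{equ22}) and~(\ref{equ33}) coincide, so the equality is expressed without ever using an equation of the form $1=x_k$, which is unavailable in $G_n$. Collecting all these equations yields a computable $n>p$ and a system ${\cal T}\subseteq G_n$.

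It then remains to verify Conditions~4 and~5 for $\K=\Q$. Condition~4 holds because a tuple $(\tilde x_1,\ldots,\tilde x_p)$ extends to a solution of ${\cal T}$ if and only if the right sides of~(\ref{equ22}) and~(\ref{equ33}) agree at $(\tilde x_1,\ldots,\tilde x_p)$, that is, if and only if $D(\tilde x_1,\ldots,\tilde x_p)=0$, the correctness of each inserted copy of ${\cal F}$ over $\Q$ being precisely Lemma~\ref{lem3}. For Condition~5 I would note that every new variable of ${\cal T}$ is forced to equal a fixed polynomial in $\tilde x_1,\ldots,\tilde x_p$ --- the powers, the monomials, the partial sums, the shared value $v$, and, by the explicit list of $20$ polynomials in the proof of Lemma~\ref{lem3}, the auxiliary variables of every copy of ${\cal F}$ --- so once $D(\tilde x_1,\ldots,\tilde x_p)=0$ the extension is unique. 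This proves Lemma~\ref{lem4} for $\K=\Q$, hence Theorem~\ref{the0}. The only real difficulty I anticipate is the bookkeeping around the two features distinguishing the rational case from the \mbox{non-negative} rational case of Lemma~\ref{lem9}: using Lemma~\ref{lem3} instead of Lemma~\ref{lem6}/Lemma~\ref{lem7} so that no spurious solution with a denominator forced to equal $-1$ arises, and routing both nested sums into a common terminal variable so as to express the equality of the two sides without the forbidden equation $1=x_k$. Neither requires an idea beyond those already present in Lemmas~\ref{lem3} and~\ref{lem9}.
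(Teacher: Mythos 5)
Your proposal is correct and follows exactly the route the paper intends: it reruns the construction from the proof of Lemma~\ref{lem9} over $\Q$, substituting the two-equation addition encoding of Lemma~\ref{lem3} for Lemma~\ref{lem7} (rightly noting that the single equation of Lemma~\ref{lem6} admits spurious solutions with $z=-1$ over $\Q$), and it supplies the bookkeeping details (shared terminal variable, uniqueness of the auxiliary values) that the paper leaves implicit in its one-line remark.
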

\section{{\bf Consequences of Conjecture~\ref{con2}}}
\begin{theorem}\label{the2}
If we assume Conjecture~\ref{con2} and a Diophantine equation \mbox{$D(x_1,\ldots,x_p)=0$} has only
finitely many solutions in \mbox{non-negative} rationals, then an upper bound for their heights can be computed.
\end{theorem}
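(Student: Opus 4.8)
The plan is to reduce the problem to Lemma~\ref{lem9} and then invoke Conjecture~\ref{con2}. Given the Diophantine equation $D(x_1,\ldots,x_p)=0$, I would first dispose of a degenerate case. If some variable $x_i$ satisfies \mbox{${\rm deg}(D,x_i)=0$} (in particular if $p=0$ or $D$ is a constant), then $D$ does not depend on $x_i$, so either $D(x_1,\ldots,x_p)=0$ has no solution in \mbox{non-negative} rationals, or it has infinitely many of them (let $x_i$ range over $\Q \cap [0,\infty)$ while the remaining coordinates of a fixed solution stay unchanged). Under the hypothesis of the theorem the second alternative is excluded, so in this case the equation has no \mbox{non-negative} rational solution, every (vacuously existing) solution has height \mbox{$\leqslant 1$}, and the algorithm may simply output $1$.

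In the remaining case \mbox{${\rm deg}(D,x_i) \geqslant 1$} for every \mbox{$i \in \{1,\ldots,p\}$}, so Lemma~\ref{lem9} applies: we can compute a positive integer \mbox{$n>p$} and a system \mbox{${\cal T} \subseteq G_n$} satisfying Conditions~6 and~7. These conditions say that the \mbox{non-negative} rational solutions of ${\cal T}$ are exactly the tuples obtained from the \mbox{non-negative} rational solutions \mbox{$(\tilde{x}_1,\ldots,\tilde{x}_p)$} of \mbox{$D(x_1,\ldots,x_p)=0$} by appending the unique admissible values \mbox{$\tilde{x}_{p+1},\ldots,\tilde{x}_n$}; in particular, by the last sentence of Lemma~\ref{lem9}, the equation \mbox{$D=0$} and the system ${\cal T}$ have the same number of solutions in \mbox{non-negative} rationals. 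Hence, if \mbox{$D=0$} has only finitely many \mbox{non-negative} rational solutions, then so does ${\cal T}$, and Conjecture~\ref{con2} gives \mbox{$h(\tilde{x}_1,\ldots,\tilde{x}_n) \leqslant f(2n)$} for every such solution of ${\cal T}$. Since \mbox{$h(\tilde{x}_1,\ldots,\tilde{x}_p) \leqslant h(\tilde{x}_1,\ldots,\tilde{x}_n)$}, and since $n$ is computed effectively from $D$ while $f$ is a total computable function, the number $f(2n)$ is a computable upper bound for the heights of the \mbox{non-negative} rational solutions of \mbox{$D(x_1,\ldots,x_p)=0$}. Combining the two cases yields the algorithm: on input $D$, test whether every variable occurs with positive degree; if not, output $1$; if so, run the procedure of Lemma~\ref{lem9}, obtain $n$, and output $f(2n)$.

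The step that I expect to require the most care is not a deep one: it is the observation that the reduction of Lemma~\ref{lem9} transports \emph{finiteness} of the solution set (and not merely its non-emptiness) from \mbox{$D=0$} to ${\cal T}$ --- which is precisely what Conditions~6 and~7 guarantee --- together with the handling of variables of degree $0$, where the finiteness hypothesis of the theorem must be used to exclude the existence of infinitely many solutions. Beyond Lemma~\ref{lem9} and Conjecture~\ref{con2}, no genuinely new idea appears to be needed; the argument parallels the proof of Theorem~\ref{the0}, with Lemma~\ref{lem9} and Conjecture~\ref{con2} playing the roles that Lemma~\ref{lem4} and Conjecture~\ref{con1} play there.
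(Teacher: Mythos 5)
Your proposal is correct and follows exactly the route of the paper, whose entire proof is the single sentence ``It follows from Lemma~\ref{lem9}''; you have merely spelled out the details (applying Conjecture~\ref{con2} to the system ${\cal T}$ to obtain the bound $f(2n)$) and added the sensible but routine handling of variables of degree~$0$. No discrepancy with the paper's argument.
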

\begin{proof}
It follows from Lemma~\ref{lem9}.
\end{proof}
\begin{theorem}\label{the5}
If we assume Conjecture~\ref{con2} and a Diophantine equation \mbox{$D(x_1,\ldots,x_p)=0$} has only finitely many
rational solutions, then an upper bound for their heights can be computed by applying Theorem~\ref{the2}
to the equation
\[
\prod\limits_{\textstyle (i_1,\ldots,i_p) \in \{1,2\}^p}
D((-1)^{\textstyle i_1} \cdot x_1,\ldots,(-1)^{\textstyle i_p} \cdot x_p)=0
\]
\end{theorem}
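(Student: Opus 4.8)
The plan is to reduce the rational case to the non-negative rational case that Theorem~\ref{the2} already settles under Conjecture~\ref{con2}. Write $P(x_1,\ldots,x_p)$ for the product displayed in the statement. Since substituting $-x_i$ for $x_i$ leaves the degree in $x_i$ unchanged, $P \in \Z[x_1,\ldots,x_p]$ and $\deg(P,x_i)=2^{\,p}\cdot\deg(D,x_i)$ for each $i\in\{1,\ldots,p\}$. First I would dispose of a degenerate case: if $D$ does not involve some variable $x_i$, then the solution set of $D(x_1,\ldots,x_p)=0$ in $\Q$ is either empty --- in which case the algorithm may simply return $1$ --- or infinite, contrary to the hypothesis; so we may assume $\deg(D,x_i)\geqslant 1$, and hence $\deg(P,x_i)\geqslant 1$, for every~$i$, which is exactly the condition that Lemma~\ref{lem9}, and therefore Theorem~\ref{the2}, requires of the equation $P(x_1,\ldots,x_p)=0$.

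Next I would record the relation between the two solution sets. For a rational tuple $(y_1,\ldots,y_p)$, choose $\varepsilon_i\in\{-1,1\}$ to be the sign of $y_i$ (either value if $y_i=0$); then the factor of $P$ whose sign pattern is $\bigl((-1)^{i_1},\ldots,(-1)^{i_p}\bigr)=(\varepsilon_1,\ldots,\varepsilon_p)$, evaluated at $(|y_1|,\ldots,|y_p|)$, equals $D(y_1,\ldots,y_p)$. Hence $(y_1,\ldots,y_p)\mapsto(|y_1|,\ldots,|y_p|)$ carries every rational solution of $D=0$ to a non-negative rational solution of $P=0$. Conversely, if a non-negative rational tuple solves $P=0$, then at least one factor vanishes, so that tuple equals $(|\varepsilon_1 x_1|,\ldots,|\varepsilon_p x_p|)$ for some rational solution $(\varepsilon_1 x_1,\ldots,\varepsilon_p x_p)$ of $D=0$; thus the map is onto the set of non-negative rational solutions of $P=0$. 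In particular, if $D=0$ has only finitely many rational solutions, then $P=0$ has only finitely many non-negative rational solutions.

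Now I would invoke Theorem~\ref{the2}: assuming Conjecture~\ref{con2}, and using the two facts just established (finitely many non-negative rational solutions of $P=0$, and $\deg(P,x_i)\geqslant 1$), we can compute a positive integer $B$ greater than the heights of all non-negative rational solutions of $P(x_1,\ldots,x_p)=0$. Finally, because $h\bigl(\tfrac{p}{q}\bigr)=h\bigl(-\tfrac{p}{q}\bigr)$, and hence $h(y_1,\ldots,y_p)=h(|y_1|,\ldots,|y_p|)$, every rational solution $(y_1,\ldots,y_p)$ of $D=0$ satisfies $h(y_1,\ldots,y_p)\leqslant B$; so $B$ is the required computable upper bound.

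I do not expect a serious obstacle: the argument is a reduction whose correctness rests on the sign-invariance of the height and on the elementary observation that the ``absolute value'' map is a surjection from the rational solution set of $D=0$ onto the non-negative rational solution set of $P=0$. The only points requiring attention are arranging the degree hypothesis of Theorem~\ref{the2} (handled by the degenerate-case discussion above) and noting that one needs merely the implication ``finitely many rational solutions of $D=0$ $\Rightarrow$ finitely many non-negative rational solutions of $P=0$,'' not a bijection of the two solution sets.
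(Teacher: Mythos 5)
Your proof is correct and follows exactly the route the paper intends (the paper in fact gives no separate proof of Theorem~\ref{the5}; the reduction to Theorem~\ref{the2} via the product over all sign patterns is the whole argument). Your elaboration --- the surjectivity of the absolute-value map onto the non-negative solution set of the product equation, the sign-invariance of the height, and the handling of the degree hypothesis of Lemma~\ref{lem9} --- supplies precisely the details the paper leaves implicit.
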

\begin{corollary}
Conjecture~\ref{con2} implies that the set of all Diophantine equations
which have infinitely many rational solutions is recursively enumerable.
Assuming Conjecture~\ref{con2}, a single query to the halting oracle
decides whether or not a given Diophantine equation has infinitely many rational solutions.
By the Davis-Putnam-Robinson-Matiyasevich theorem, the same is true for
an oracle that decides whether or not a given Diophantine equation has an integer solution.
\end{corollary}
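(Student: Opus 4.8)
The plan is to reduce the statement to the computable height bound furnished by Theorem~\ref{the5}. Fix a Diophantine equation $D(x_1,\ldots,x_p)=0$. Assuming Conjecture~\ref{con2}, Theorem~\ref{the5} provides an algorithm that on input $D$ outputs an integer $B=B(D)$, and guarantees that \emph{if} $D(x_1,\ldots,x_p)=0$ has only finitely many rational solutions, then every such solution has height at most $B(D)$. I would first isolate the single fact on which everything hinges: the equation $D(x_1,\ldots,x_p)=0$ has a rational solution of height strictly greater than $B(D)$ if and only if it has infinitely many rational solutions. The point to stress is that $B(D)$ is produced unconditionally --- the algorithm of Theorem~\ref{the5} is total --- and may fail to be a genuine bound only when the solution set is infinite, which is exactly the situation we are trying to detect; so one never needs to know finiteness in advance.

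For this equivalence, the ``only if'' direction is immediate from Theorem~\ref{the5}: a rational solution of height exceeding $B(D)$ is incompatible with the solution set being finite, hence the set is infinite. The ``if'' direction is pure counting: for fixed $p$ there are only finitely many rational $p$-tuples of height at most $B(D)$, so an infinite solution set must contain one of larger height.

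Given the equivalence, recursive enumerability follows at once. I would describe the semi-decision procedure: compute $B(D)$, then run through an effective enumeration of $\Q^p$ and halt, accepting $D$, as soon as a tuple $(\tilde x_1,\ldots,\tilde x_p)$ is reached with $D(\tilde x_1,\ldots,\tilde x_p)=0$ and $h(\tilde x_1,\ldots,\tilde x_p)>B(D)$. By the equivalence this halts precisely for those $D$ with infinitely many rational solutions, so that set is recursively enumerable. A single query to the halting oracle, asking whether this procedure halts on $D$, then decides the property; it matters only that the reduction is many-one, so that one query is enough. Finally, by the Davis--Putnam--Robinson--Matiyasevich theorem the halting set is many-one reducible to the set of Diophantine equations possessing an integer solution, and composing this many-one reduction with the previous one shows that a single query to an oracle for integer-solvability likewise decides whether $D$ has infinitely many rational solutions.

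I do not expect a genuine obstacle: once Theorems~\ref{the2} and~\ref{the5} are available, the remainder is textbook recursion theory. The one delicate point is conceptual rather than technical --- resisting the circular temptation to decide finiteness before invoking Theorem~\ref{the5}, and instead letting the displayed equivalence carry the argument; a secondary bookkeeping point is to verify that every reduction used is many-one, not merely Turing, which is what the phrase ``a single query'' demands.
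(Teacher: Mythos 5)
Your proposal is correct and follows exactly the route the paper intends: the corollary is stated without proof as an immediate consequence of the computable bound from Theorem~\ref{the5}, and your equivalence (``infinitely many rational solutions iff some rational solution has height exceeding $B(D)$'') together with the standard many-one reductions to the halting set and, via Davis--Putnam--Robinson--Matiyasevich, to integer solvability is precisely the argument being left to the reader.
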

\section{{\bf Theorems on relative decidability}}
\vskip 0.01truecm
\noindent
{\bf Question}~(\cite{MathOverflow}). {\em Can the twin prime problem be solved with a single use of a halting oracle?}
\vskip 0.2truecm
\par
Let \mbox{$\xi(3)=4$}, and let \mbox{$\xi(n+1)=\xi(n)!$} for every integer \mbox{$n \geqslant 3$}.
For an integer \mbox{$n \geqslant 3$}, let \mbox{$\Psi_n$} denote the statement: if a system
\mbox{${\cal S}~\subseteq~\Bigl\{x_i!=x_{i+1}:~1 \leqslant i \leqslant n-1\Bigr\} \cup$}
\mbox{$\Bigl\{x_i \cdot x_j=x_{j+1}:~1 \leqslant i \leqslant j \leqslant n-1\Bigr\}$}
has only finitely many solutions in positive integers \mbox{$x_1,\ldots,x_n$},
then each such solution \mbox{$(x_1,\ldots,x_n)$} satisfies \mbox{$x_1,\ldots,x_n \leqslant \xi(n)$}.
\begin{theorem}~(\cite{Tyszka1})
The statement \mbox{$\Psi_{16}$} proves the implication: if there exists a twin prime greater than \mbox{$\xi(14)$},
then there are infinitely many twin primes.
\end{theorem}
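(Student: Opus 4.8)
The plan is to argue by contradiction and let $\Psi_{16}$ collapse an unbounded supply of twin primes into a finite one. So I would fix $\Psi_{16}$, fix a twin prime $p$ with $p>\xi(14)$, and assume toward a contradiction that only finitely many twin primes exist. The target is a system $\mathcal{S}\subseteq\{x_i!=x_{i+1}:1\leqslant i\leqslant 15\}\cup\{x_i\cdot x_j=x_{j+1}:1\leqslant i\leqslant j\leqslant 15\}$ in the variables $x_1,\ldots,x_{16}$ with two features. First, in any positive-integer solution the whole tuple is forced by the ``input'' variable $x_1$ (each later variable is literally a factorial or a product of earlier ones), and the shared-right-hand-side equations of $\mathcal{S}$ are satisfiable only when $x_1$ is the smaller member of a twin prime pair --- apart from finitely many harmless small solutions; hence, under the finiteness assumption, $\mathcal{S}$ has only finitely many solutions. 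Second, for our particular $p$ the system has a solution in which $x_{16}=(p!)!$ (or a larger iterated factorial of $p$). Granting such an $\mathcal{S}$, $\Psi_{16}$ applies and bounds every coordinate of every solution by $\xi(16)$; but $p>\xi(14)$ gives $p\geqslant\xi(14)+1$, hence $p!>\xi(14)!=\xi(15)$ and $(p!)!>\xi(15)!=\xi(16)$, so the solution of the second item violates the bound. This contradiction shows that the twin primes are infinite, which is the asserted implication.

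Building $\mathcal{S}$ is the substance of the proof. The arithmetic input is Wilson's theorem: for $n\geqslant 2$, $n$ is prime iff $n\mid(n-1)!+1$ (or, more compactly, Clement's criterion that $(n,n+2)$ is a twin prime pair iff $n(n+2)\mid 4((n-1)!+1)+n$). This turns ``$(n,n+2)$ is a twin prime pair'' into a conjunction of polynomial identities and divisibilities among $n$, $n+2$, $(n-1)!$, the constants $1,2,4$, and a bounded number of auxiliary quotients, each divisibility $a\mid b$ being rewritten as $b=a\cdot c$ with $c$ a new unknown. The hard part is realizing this conjunction inside the rigid language of $\Psi_{16}$, where each new variable must be the factorial of its immediate predecessor or the product of its predecessor with an earlier variable, and where the only way to impose a real constraint is to let two admissible equations share a right-hand side: using $x_{k-1}!=x_k$ together with $x_i\cdot x_{k-1}=x_k$ forces $(x_{k-1}-1)!=x_i$, while using $x_i\cdot x_{k-1}=x_k$ together with $x_{i'}\cdot x_{k-1}=x_k$ forces $x_i=x_{i'}$. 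In the spirit of the encodings behind Lemmas~\ref{lem4} and~\ref{lem9}, I would use these two devices to pin down the constants, to force the needed successor relations implicitly (injectivity of $n\mapsto n!$ on the positive integers converts a factorial identity into $y=x+1$), to build $(n-1)!$, $n!$, $(n+1)!$ step by step, and to encode the Wilson or Clement relations together with their quotient witnesses. The construction must be laid out so that it consumes at most $14$ variables and so that some variable among them is $\geqslant n$ (for instance $n$ itself, or $(n-1)!$); the at least two remaining slots are then filled by further factorial steps applied to that variable, which is precisely what produces the iterated factorial of $p$ demanded above. Soundness (a genuine twin prime yields a solution, because every Wilson witness then exists and every factorial is a positive integer) and completeness up to finitely many spurious small solutions (every solution's input is forced to be the smaller member of a twin prime pair) follow from the exactness of Wilson's/Clement's criterion, once the boundary behaviour of the successor simulations has been checked; see \cite{Tyszka1} for the detailed construction.

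I expect the one genuine obstacle to be this explicit encoding: compressing a faithful, essentially deterministic Diophantine definition of ``$(n,n+2)$ is a twin prime pair with $n$ named'' into $14$ variables of the factorial-and-multiplication straight-line format, leaving two slots to spare for the double-factorial blow-up and admitting no unintended infinite family of solutions. Everything else --- propagating $n>\xi(14)$ through two factorials to exceed $\xi(16)=\xi(15)!=(\xi(14)!)!$, and the single invocation of $\Psi_{16}$ --- is routine, so the theorem follows.
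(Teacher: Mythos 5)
The paper itself does not prove this theorem; it imports it verbatim from \cite{Tyszka1}, so there is no internal proof to compare against. Your outline does reproduce the architecture of the argument in that reference: assume only finitely many twin primes exist, encode ``$x_1$ is the smaller member of a twin prime pair'' via Wilson's/Clement's criterion as a system ${\cal S}$ in the rigid factorial-and-product format, append factorial steps so that a twin prime $p>\xi(14)$ yields a solution with a coordinate $(p!)!>\xi(16)$, and contradict the bound $\xi(16)$ supplied by $\Psi_{16}$. The surrounding logic is sound: the arithmetic $p>\xi(14)\Rightarrow p!>\xi(15)\Rightarrow (p!)!>\xi(16)$ is right, and so is your description of how constraints are imposed by letting two admissible equations share a right-hand side.

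Nevertheless there is a genuine gap, and it sits exactly where you locate it: the system ${\cal S}$ is never constructed, and you close the argument by citing \cite{Tyszka1} --- the very paper the theorem is quoted from --- which makes the attempt circular. Everything nontrivial in the theorem lives in that construction. Concretely, three things remain unverified without it: (i) that fourteen variables suffice to express the Wilson congruences for $n$ and $n+2$ together with all quotient witnesses, \emph{and} that the layout can be arranged so the large quantity lands at index $14$ --- this matters because both allowed equation types ($x_i!=x_{i+1}$ and $x_i\cdot x_j=x_{j+1}$) can only write into the successor index, so the two-step blow-up $x_{14}!=x_{15}$, $x_{15}!=x_{16}$ forces the position of the large variable and the ordering of the whole straight-line program; (ii) that the degenerate cases of the successor simulation (e.g.\ $x!\cdot y=y!$ also holds at $x=y=1$) and of Wilson's criterion at small arguments contribute only finitely many spurious solutions, so that finiteness of twin primes really implies finiteness of the solution set of ${\cal S}$, which is the hypothesis $\Psi_{16}$ needs; and (iii) that a genuine twin prime $p$ extends to a full positive-integer solution, so the offending large solution actually exists. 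Until the system is written down and these points are checked against it, the theorem is asserted rather than proved.
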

\begin{corollary}
Assuming the statement \mbox{$\Psi_{16}$}, a single query to the halting oracle decides the validity
of the twin prime conjecture.
\end{corollary}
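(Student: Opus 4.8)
The plan is to reduce the truth of the twin prime conjecture to a single halting question. First I would note that $\xi(14)$ is a concrete, effectively computable natural number: starting from $\xi(3)=4$ and iterating the factorial eleven times yields $\xi(14)$. Hence one can explicitly construct a Turing machine $M$ which, on empty input, successively tests the integers $k>\xi(14)$ and halts precisely when it first encounters a $k$ for which both $k$ and $k+2$ are prime. Since primality is decidable, $M$ is a bona fide machine, and $M$ halts if and only if there exists a twin prime greater than $\xi(14)$.

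Next I would isolate the equivalence that carries the argument: under the hypothesis $\Psi_{16}$, the statement ``there exists a twin prime greater than $\xi(14)$'' is equivalent to the twin prime conjecture. The forward implication is exactly the preceding Theorem, which is derived from $\Psi_{16}$. The reverse implication is unconditional: if there are infinitely many twin primes, then, because only finitely many of them can be $\leqslant \xi(14)$, at least one of them exceeds $\xi(14)$.

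Finally I would spell out the decision procedure. Put to the halting oracle the single question whether $M$ halts. If the answer is ``yes'', output that the twin prime conjecture holds; this is justified by the equivalence above, hence by $\Psi_{16}$. If the answer is ``no'', then no twin prime exceeds $\xi(14)$, so the set of twin primes is contained in the finite set $\{2,\ldots,\xi(14)\}$ and is therefore finite, and we output that the twin prime conjecture fails; note that this branch uses no hypothesis whatsoever. Since exactly one oracle query is made, the corollary follows.

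I do not expect a genuine obstacle here; the only points requiring care are that the ``conjecture is false'' branch be handled without appeal to $\Psi_{16}$, and that the construction really consults the oracle on a single fixed instance. All of the substantive work is already done in the Theorem that precedes the statement.
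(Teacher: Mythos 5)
Your argument is correct and is exactly the one the paper intends: the corollary is stated without proof as an immediate consequence of the preceding theorem, and your reduction (query the oracle on the machine searching for a twin prime above $\xi(14)$, use $\Psi_{16}$ only in the ``yes'' branch) is the canonical way to fill it in. No issues.
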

\begin{conjecture}\label{con3} (Harvey Friedman's conjecture in \cite{Friedman})
The set of all Diophantine equations which have only finitely many rational solutions is not recursively enumerable.
\end{conjecture}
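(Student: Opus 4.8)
The statement is Harvey Friedman's conjecture and is, as far as I know, open, so what follows is a plan for an attack rather than a sketch of a known argument. Write $F$ for the set of Diophantine equations having only finitely many rational solutions. The plan is to produce a many-one reduction to $F$ from a set already known not to be recursively enumerable; the most natural choice is $\mathrm{Fin}=\{e : W_e \text{ is finite}\}$, which is $\Sigma_2$-complete and in particular not r.e. Concretely, I would look for a computable map $e \mapsto D_e$ sending an index to a Diophantine equation so that $e \in \mathrm{Fin}$ if and only if $D_e \in F$. Then an enumeration of $F$ would, composed with the reduction, enumerate $\mathrm{Fin}$, a contradiction, and the conjecture would follow.

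The construction of $e \mapsto D_e$ would begin from the Davis--Putnam--Robinson--Matiyasevich theorem, which supplies, uniformly in $e$, a polynomial $P_e(a,\vec y)$ with $a \in W_e \iff \exists \vec y \in \N^m\ P_e(a,\vec y)=0$. The goal is a single equation $D_e=0$ whose set of rational solutions is finite exactly when $W_e$ is finite. Two issues must be handled. First, even when $W_e$ is finite the integer solution set of the obvious auxiliary equation need not be, since a fixed $a \in W_e$ may have infinitely many witnesses $\vec y$; one must therefore refine $P_e$ to a single-fold (or bounded-witness) representation --- via the Matiyasevich--Robinson uniqueness device, or by adjoining a counter variable --- so that finiteness of $W_e$ forces finiteness of the integer solution set. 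Second, and this is where the real difficulty lies, one has to pass from $\Z$ to $\Q$: every integer point is a rational point, but the equation may acquire large families of spurious rational points, so one needs either to kill them or to arrange the encoding so that the assertion ``$W_e$ is infinite'' manifests as infinitely many rational --- not merely integer --- points, robustly under the transformations used in Lemmas~\ref{lem1} and~\ref{lem9}.

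The main obstacle is that over $\Q$ we have essentially no tools for this second step. There is no known analogue of the MRDP theorem for the rationals, Hilbert's Tenth Problem for $\Q$ is itself open, and whether a variety carries finitely or infinitely many rational points is governed by arithmetic geometry that is not known to be effective: Faltings' theorem supplies no height bound, the Mordell--Weil rank is not known to be computable, and examples of the flavour of Lemma~\ref{bremner} show how erratic the count of rational points can be. For these reasons I do not expect the reduction to be assembled with present techniques, which is precisely why the statement is recorded as a conjecture. The most the machinery of this paper yields unconditionally is a reformulation: if Conjecture~\ref{con1} holds, then the complement of $F$ is r.e., so ``$F$ is not r.e.'' becomes equivalent to ``$F$ is not recursive'', i.e.\ to the non-existence of an algorithm deciding finiteness of the rational solution set; even under this hypothesis, proving $F$ non-recursive still seems to require genuinely new input on rational points of varieties, and that is the step I would expect to be decisive.
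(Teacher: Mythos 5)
You are right that the paper offers no proof of this statement: it is recorded as Conjecture~\ref{con3}, attributed to Harvey Friedman, and left open, so your decision to present a plan of attack rather than a proof matches the paper's stance. Your proposed route (a many-one reduction from $\mathrm{Fin}=\{e: W_e \text{ finite}\}$, built on a single-fold MRDP representation) is exactly the strategy that works over $\N$ --- it is essentially how the paper's final section reproves Smory\'nski's theorem that the set of Diophantine equations with at most finitely many solutions in non-negative integers is not r.e. --- and you correctly locate the obstruction for $\Q$ in the absence of any MRDP-type machinery over the rationals.

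One point worth sharpening: the paper establishes, unconditionally, both directions of an equivalence that is stronger than the conditional reformulation you give at the end. Via Lemma~\ref{xyz} and Flowchart~5, if the set $F$ of equations with only finitely many rational solutions were r.e., then Hilbert's Tenth Problem for $\Q$ would be decidable; conversely, via Flowchart~6, a decision procedure for HTP over $\Q$ would make $F$ r.e. Hence Friedman's conjecture is equivalent to the undecidability of HTP for $\Q$, with no appeal to Conjecture~\ref{con1} needed. In particular, your reduction from $\mathrm{Fin}$, if it could be carried out, would not merely prove the conjecture --- it would settle HTP for $\Q$ in the negative, which calibrates the difficulty more precisely than the conditional remark ``under Conjecture~\ref{con1}, non-r.e.\ becomes equivalent to non-recursive.'' Your overall assessment that genuinely new arithmetic-geometric input is required is consistent with this.
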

\par
Conjecture~\ref{con3} implies Conjecture~\ref{con4}.
\begin{conjecture}\label{con4}
The set of all Diophantine equations which have only finitely many rational solutions is not computable.
\end{conjecture}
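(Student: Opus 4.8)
The plan is to obtain Conjecture~\ref{con4} from Conjecture~\ref{con3} by the elementary recursion-theoretic fact that every decidable set is semidecidable; contrapositively, a set that fails to be recursively enumerable cannot be computable. Thus the entire argument is a single implication, run in contrapositive form, and it does not interact with Conjectures~\ref{con1} and~\ref{con2} at all.

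First I would fix, once and for all, a standard effective G\"odel encoding of the polynomials in $\Z[x_1,x_2,\ldots]$ (equivalently, of the Diophantine equations $D(x_1,\ldots,x_p)=0$), so that the phrase \emph{the set of all Diophantine equations which have only finitely many rational solutions} denotes a genuine set $\mathcal{E}\subseteq\N$. Both Conjecture~\ref{con3} and Conjecture~\ref{con4} are to be read as assertions about this one fixed set $\mathcal{E}$, and the implication is insensitive to which reasonable encoding is chosen, since any two such encodings are recursively interconvertible.

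Next I would argue by contraposition. Assume Conjecture~\ref{con4} fails, i.e.\ $\mathcal{E}$ is computable. Then its characteristic function $\chi_{\mathcal{E}}$ is computable, and the procedure ``for $m=0,1,2,\ldots$ in turn, evaluate $\chi_{\mathcal{E}}(m)$ and output $m$ whenever the value is $1$'' terminates on every input and enumerates exactly the members of $\mathcal{E}$. Hence $\mathcal{E}$ is recursively enumerable, contradicting Conjecture~\ref{con3}. Therefore Conjecture~\ref{con3} implies Conjecture~\ref{con4}.

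There is essentially no obstacle here: this is the textbook observation that decidability entails semidecidability, and all the genuine content of the paper resides in Conjectures~\ref{con1} and~\ref{con2} and in Theorems~\ref{the0}, \ref{the2}, and~\ref{the5}. The only point that deserves a sentence of care is the bookkeeping remark in the first step: $\mathcal{E}$ must be treated as a fixed subset of $\N$ under a fixed effective coding, so that the words ``recursively enumerable'' and ``computable'' in the two conjectures are being applied to the same object.
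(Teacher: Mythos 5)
Your proposal is correct and matches the paper exactly: the paper does not (and cannot) prove Conjecture~\ref{con4} outright, but merely asserts that Conjecture~\ref{con3} implies it, which is precisely the standard fact you spell out --- every computable set is recursively enumerable, so a set that is not recursively enumerable is not computable. Your contrapositive argument, including the bookkeeping about a fixed effective coding, is the fully written-out version of the paper's one-line remark.
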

By Theorem~\ref{the0}, Conjecture~\ref{con1} implies Conjecture~\ref{maj2}.
By Theorem~\ref{the5}, Conjecture~\ref{con2} implies Conjecture~\ref{maj2}.
\begin{conjecture}\label{maj2}
There is an algorithm which takes as input a Diophantine equation \mbox{$D(x_1,\ldots,x_p)=0$},
returns an integer \mbox{$b \geqslant 2$}, where $b$ is greater than the number of rational solutions,
if the solution set is finite.
\end{conjecture}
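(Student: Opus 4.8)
Since Conjecture~\ref{maj2} is itself conjectural, what has to be established is the pair of implications stated just above it: that Conjecture~\ref{con1} implies Conjecture~\ref{maj2}, and that Conjecture~\ref{con2} implies Conjecture~\ref{maj2}. The plan for both is the same --- to turn a computable bound on the \emph{heights} of the rational solutions of a Diophantine equation into a computable bound on their \emph{number}, exploiting that only finitely many rational numbers have bounded height.

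First I would assume Conjecture~\ref{con1}. By Theorem~\ref{the0} there is an algorithm $\mathcal A$ that, on input a Diophantine equation $D(x_1,\ldots,x_p)=0$, returns an integer $N=\mathcal A(D)$ which exceeds $h(x_1,\ldots,x_p)$ for every rational solution $(x_1,\ldots,x_p)$, provided the solution set is finite. The required algorithm for Conjecture~\ref{maj2} then computes $N$ and outputs $b=(2N+1)^{2p}+2$. To check correctness: if the solution set is finite, every coordinate $x_i$ of every solution is a rational number of height at most $N$; writing it in lowest terms as $\frac{u}{v}$ gives $\max(|u|,|v|)\le N$, so there are fewer than $(2N+1)^2$ possibilities for $x_i$, hence fewer than $(2N+1)^{2p}$ solution $p$-tuples, so $b$ is indeed greater than the number of rational solutions, and obviously $b\ge 2$.

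For Conjecture~\ref{con2} the argument is word-for-word the same, except that the height bound $N$ is now produced by the algorithm of Theorem~\ref{the5} (equivalently, by Theorem~\ref{the2} together with Lemma~\ref{lem9}), which likewise returns a valid bound whenever the set of rational solutions is finite. The same $b=(2N+1)^{2p}+2$ works for the same counting reason.

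The one point deserving care --- and essentially the only one --- is that the algorithms from Theorems~\ref{the0} and~\ref{the5} are correct only \emph{conditionally}: on an equation with infinitely many rational solutions they still halt and return some integer, but with no guarantee about it. This costs nothing here, because Conjecture~\ref{maj2} asks for a $b$ with exactly the same conditional guarantee, so at no stage do we need to decide whether the solution set is finite. The remaining ingredient, the count of rationals of height at most $N$, is entirely elementary. Consequently I expect no genuine obstacle --- the derivation is a short reduction followed by a one-line cardinality estimate.
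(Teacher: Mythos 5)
Your proposal is correct and follows the same route the paper intends: the paper simply asserts that Conjecture~\ref{con1} implies Conjecture~\ref{maj2} via Theorem~\ref{the0} and that Conjecture~\ref{con2} implies it via Theorem~\ref{the5}, leaving implicit exactly the step you supply, namely converting a computable height bound $N$ into a bound on the number of solutions by counting the finitely many rationals of height at most $N$. Your cardinality estimate and the observation that the conditional nature of the guarantee costs nothing are both sound.
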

\vskip 0.01truecm
\noindent
{\bf Guess}~\mbox{(\cite[p.~16]{Kim})}. {\em The question whether or not a given Diophantine
equation has only finitely many rational solutions is decidable with an oracle that
decides whether or not a given Diophantine equation has a rational solution.}
\vskip 0.2truecm
\par
Originally, Minhyong Kim formulated the Guess as follows: for rational solutions, the finiteness
problem is decidable relative to the existence problem.
Conjecture~\ref{con4} and the Guess imply that there is no algorithm
which decides whether or not a Diophantine equation has a rational solution.
Martin Davis' conjecture in \mbox{\cite[p.~729]{Davis}} implies the same.
\begin{theorem}\label{the8}
Conjecture~\ref{maj2} implies
that the question whether or not a given Diophantine
equation has only finitely many rational solutions is decidable by a single query to an oracle
that decides whether or not a given Diophantine equation has a rational solution.
\end{theorem}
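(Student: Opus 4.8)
The plan is to build the promised relative-decision procedure by combining Conjecture~\ref{maj2}'s computable bound $b$ on the number of rational solutions with repeated calls to the existence oracle, packaged into a single query. First I would invoke Conjecture~\ref{maj2} to compute, from the input equation $D(x_1,\ldots,x_p)=0$, an integer $b\geqslant 2$ which exceeds the number of rational solutions whenever that number is finite. The key observation is then that $D=0$ has infinitely many rational solutions if and only if it has \emph{at least} $b$ distinct rational solutions: if the solution set is finite it has fewer than $b$ solutions by choice of $b$, and if it is infinite it trivially has at least $b$. So the finiteness question reduces to deciding whether $D=0$ has at least $b$ distinct rational tuples in its solution set.

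Next I would express ``$D=0$ has at least $b$ distinct rational solutions'' as the solvability of a single Diophantine equation over $\Q$, so that it can be handed to the existence oracle. Introduce $b$ fresh blocks of variables $x^{(1)}_1,\ldots,x^{(1)}_p,\ \ldots,\ x^{(b)}_1,\ldots,x^{(b)}_p$, and form the conjunction of the $b$ equations $D(x^{(m)}_1,\ldots,x^{(m)}_p)=0$ together with the distinctness conditions: for each pair $1\leqslant m<m'\leqslant b$, the tuples $(x^{(m)}_1,\ldots,x^{(m)}_p)$ and $(x^{(m')}_1,\ldots,x^{(m')}_p)$ must differ in some coordinate. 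A conjunction of polynomial equations $P_1=0,\ldots,P_r=0$ over $\Q$ is equivalent to the single equation $P_1^2+\cdots+P_r^2=0$; a disjunction $P=0 \vee Q=0$ is equivalent to $PQ=0$. Using these two reductions one folds the ``there is a coordinate where they differ'' disjunctions and the ``all $b$ copies vanish'' conjunction into one Diophantine equation $E=0$, computable from $D$ and $b$. By construction, $E=0$ has a rational solution if and only if $D=0$ has at least $b$ distinct rational solutions, i.e.\ if and only if $D=0$ has infinitely many rational solutions.

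Finally the algorithm is: on input $D=0$, compute $b$ via Conjecture~\ref{maj2}, compute $E=0$ as above, ask the oracle whether $E=0$ has a rational solution, and answer ``infinitely many'' if the oracle says yes and ``only finitely many'' if it says no. This is a single oracle query, as required. The one place needing a little care is handling the distinctness predicate: the negation of $x^{(m)}_i=x^{(m')}_i$ for all $i$ is not itself a polynomial equation, so the coordinatewise-difference disjunction must genuinely be encoded through the $PQ=0$ trick rather than by introducing inequations; I expect this bookkeeping, together with checking that squares-sum and product encodings behave correctly over $\Q$ (where they do, since $\Q$ has no zero divisors and a sum of rational squares vanishes only when each term does), to be the main — though routine — obstacle. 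Everything else is immediate from Conjecture~\ref{maj2} and the definition of the existence oracle.
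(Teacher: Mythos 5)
Your overall strategy is exactly the right one (and, as far as the paper's terse proof via Flowchart~4 reveals, the intended one): use Conjecture~\ref{maj2} to compute $b$, observe that the solution set is infinite if and only if there exist $b$ pairwise distinct rational solutions, encode that statement as the rational solvability of a single Diophantine equation $E=0$, and spend the one oracle query on $E$. The reduction of the conjunction of the $b$ copies of $D=0$ to one equation via a sum of squares is also fine over $\Q$.

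There is, however, a genuine defect in the step you yourself flag as delicate: the pairwise-distinctness conditions. For a pair $m<m'$, ``the tuples differ in some coordinate'' is the disjunction $\bigvee_{i}\bigl(x^{(m)}_i\neq x^{(m')}_i\bigr)$, i.e.\ a disjunction of \emph{inequations}, and neither of your two reductions applies to it. The product trick $PQ=0$ encodes $P=0\vee Q=0$, not $P\neq 0\vee Q\neq 0$; and the set of pairs of distinct tuples is Zariski open, so it is not the zero set of any polynomial in the variables $x^{(m)}_i,x^{(m')}_i$ alone. Your explicit resolve to avoid ``introducing inequations'' is therefore the wrong move: you must introduce them and then eliminate them with the reciprocal trick, which is where new existential variables enter. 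Concretely, over $\Q$ the condition $\bigvee_i\bigl(x^{(m)}_i\neq x^{(m')}_i\bigr)$ is equivalent to $\sum_i\bigl(x^{(m)}_i-x^{(m')}_i\bigr)^2\neq 0$, which in turn is equivalent to $\exists z_{m,m'}\ z_{m,m'}\cdot\sum_i\bigl(x^{(m)}_i-x^{(m')}_i\bigr)^2=1$. Adjoining one such witness variable $z_{m,m'}$ and one such equation for each of the $\binom{b}{2}$ pairs, and then folding everything into one equation by summing squares, yields a correct $E$ with integer coefficients; with that repair the rest of your argument goes through verbatim.
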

\begin{proof}
Assuming that Conjecture~\ref{maj2} holds, the execution of Flowchart~4 decides whether or not a Diophantine equation
\mbox{$D(x_1,\ldots,x_p)=0$} has only finitely many rational solutions.
\begin{center}
\includegraphics[width=\hsize]{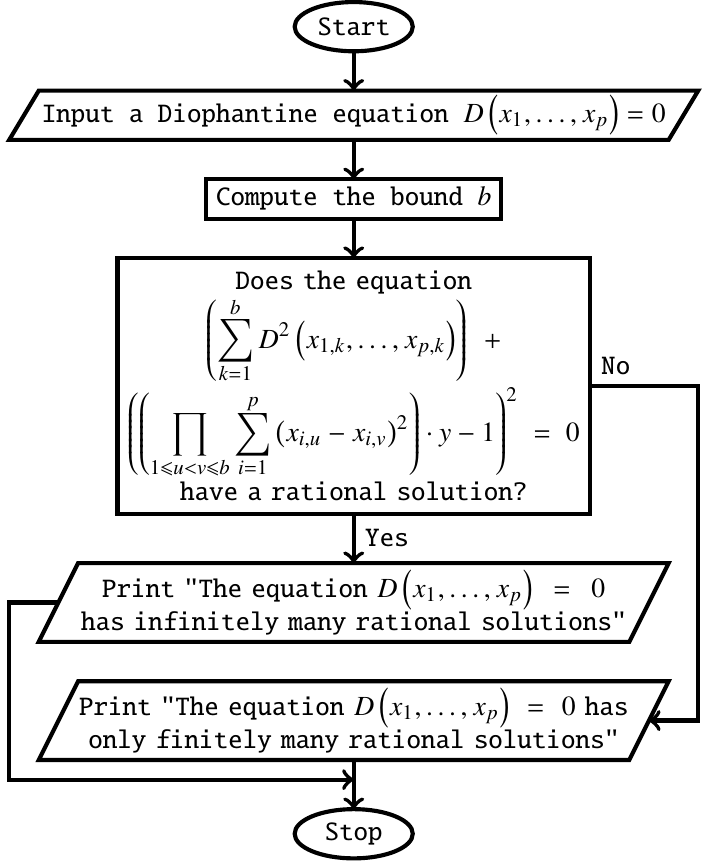}
\end{center}
\vskip 0.01truecm
\centerline{{\it Flowchart 4: Conjecture~\ref{maj2} implies the Guess}}
\end{proof}
\begin{corollary}
Conjecture~\ref{maj2} implies
that the question whether or not a given Diophantine
equation has only finitely many rational solutions is decidable by a single query to an oracle
that decides whether or not a given Diophantine equation has an integer solution.
\end{corollary}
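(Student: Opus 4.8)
The plan is to exhibit the algorithm realised by Flowchart~4 and to check that it makes exactly one oracle call. Given a Diophantine equation $D(x_1,\ldots,x_p)=0$, first run the algorithm provided by Conjecture~\ref{maj2} to obtain an integer $b \geqslant 2$ that exceeds the number of rational solutions of $D=0$ whenever that number is finite. The pivotal observation is that $D=0$ has infinitely many rational solutions if and only if it has at least $b$ pairwise distinct rational solutions: the forward implication is trivial, and the backward one holds because, by the defining property of $b$, a finite solution set has fewer than $b$ elements, so possessing $b$ distinct solutions forces the solution set to be infinite.

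Next I would reduce the condition ``$D=0$ has at least $b$ pairwise distinct rational solutions'' to the rational solvability of a single Diophantine equation. Introduce $b$ fresh $p$-tuples of variables $X^{(1)},\ldots,X^{(b)}$ together with auxiliary variables $w_{s,t}$ for $1 \leqslant s < t \leqslant b$, and form
\[
E \;:=\; \sum_{s=1}^{b} D\bigl(X^{(s)}\bigr)^2 \;+\; \sum_{1 \leqslant s < t \leqslant b} \Bigl( w_{s,t} \cdot \sum_{i=1}^{p} \bigl(X^{(s)}_i - X^{(t)}_i\bigr)^2 \;-\; 1 \Bigr)^2 .
\]
Since $\Q$ is a formally real field, a finite sum of squares of rationals vanishes only when every summand vanishes. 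Hence a rational solution of $E=0$ yields rational tuples with $D\bigl(X^{(s)}\bigr)=0$ for all $s$ and, for each pair $s<t$, a rational $w_{s,t}$ with $w_{s,t}\cdot\sum_i (X^{(s)}_i-X^{(t)}_i)^2 = 1$, which forces $\sum_i (X^{(s)}_i-X^{(t)}_i)^2 \neq 0$, i.e.\ $X^{(s)} \neq X^{(t)}$. Conversely, $b$ distinct rational solutions of $D=0$ produce such a rational point of $E=0$, taking each $w_{s,t}$ to be the reciprocal of the (nonzero) sum of squares. Therefore $E=0$ has a rational solution if and only if $D=0$ has at least $b$ pairwise distinct rational solutions, hence if and only if $D=0$ has infinitely many rational solutions.

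Finally, Flowchart~4 computes $b$ from $D=0$ via Conjecture~\ref{maj2}, writes down the polynomial $E$, makes one call to the oracle that decides rational solvability of a Diophantine equation, and outputs ``infinitely many rational solutions'' precisely when the oracle answers affirmatively on $E=0$ and ``only finitely many rational solutions'' otherwise. Both $b$ and $E$ are obtained effectively, so this is an algorithm that uses a single oracle query, which is exactly the assertion of the theorem; the corollary for the integer-solution oracle then follows from the Davis--Putnam--Robinson--Matiyasevich theorem as in the earlier corollaries. The only delicate point is the encoding step, where one must be certain that distinctness of rational tuples is expressible by a single polynomial equation; the sum-of-squares trick together with the reciprocal witness accomplishes this over $\Q$, and everything else is routine bookkeeping.
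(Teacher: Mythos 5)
Your proposal is correct and follows essentially the same route as the paper: Flowchart~4 computes the bound $b$ from Conjecture~\ref{maj2}, encodes ``$D=0$ has at least $b$ pairwise distinct rational solutions'' as the rational solvability of a single equation via the sum-of-squares and reciprocal-witness trick over the formally real field $\Q$, makes one oracle call, and the passage from the rational-solvability oracle to the integer-solvability oracle is dispatched exactly as the paper does, by the Davis--Putnam--Robinson--Matiyasevich reduction. Your write-up in fact supplies the construction that the paper leaves implicit inside the flowchart, so nothing is missing.
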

\begin{lemma}\label{xyz}
A Diophantine equation \mbox{$D(x_1,\ldots,x_p)=0$} has no solutions in rationals \mbox{$x_1,\ldots,x_p$}
if and only if the equation \mbox{$D(x_1,\ldots,x_p)+0 \cdot x_{p+1}=0$} has only finitely many
solutions in rationals \mbox{$x_1,\ldots,x_{p+1}$}.
\end{lemma}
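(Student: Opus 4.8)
The plan is to notice that $0 \cdot x_{p+1}$ is the zero polynomial, so that $D(x_1,\ldots,x_p)+0 \cdot x_{p+1}$ and $D(x_1,\ldots,x_p)$ are literally the same element of $\Z[x_1,\ldots,x_{p+1}]$; only the ambient number of variables differs. Hence the solution set of $D(x_1,\ldots,x_p)+0 \cdot x_{p+1}=0$ in $\Q^{p+1}$ is exactly $V \times \Q$, where $V \subseteq \Q^p$ denotes the set of rational solutions of $D(x_1,\ldots,x_p)=0$. All of the content of the lemma is packaged in this identification, and the two implications then fall out immediately.

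First I would dispatch the forward direction. If $D(x_1,\ldots,x_p)=0$ has no rational solutions, then $V=\emptyset$, so $V \times \Q=\emptyset$; the empty set is finite, hence $D(x_1,\ldots,x_p)+0 \cdot x_{p+1}=0$ has only finitely many (namely zero) solutions in rationals $x_1,\ldots,x_{p+1}$. For the converse I would argue by contraposition: suppose $D(x_1,\ldots,x_p)=0$ has a rational solution $(a_1,\ldots,a_p)$. Then for every $t \in \Q$ the tuple $(a_1,\ldots,a_p,t)$ lies in $V \times \Q$ and therefore solves $D(x_1,\ldots,x_p)+0 \cdot x_{p+1}=0$. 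Since $\Q$ is infinite, this exhibits infinitely many rational solutions of $D(x_1,\ldots,x_p)+0 \cdot x_{p+1}=0$. Combining the two implications yields the stated equivalence.

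There is no real obstacle here; if anything, the only point that deserves an explicit word is the bookkeeping convention that $D(x_1,\ldots,x_p)+0 \cdot x_{p+1}=0$ is to be read as a genuine Diophantine equation in the $p+1$ unknowns $x_1,\ldots,x_{p+1}$ (so that its solutions are counted in $\Q^{p+1}$ and the dummy variable $x_{p+1}$ ranges freely), rather than being silently collapsed back to an equation in $p$ unknowns. Once this is agreed, the argument is purely a matter of the set-theoretic identity ``solution set $=V \times \Q$'' together with the triviality that $V \times \Q$ is finite precisely when $V$ is empty.
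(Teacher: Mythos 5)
Your proof is correct, and it is exactly the intended argument: the paper states Lemma~\ref{xyz} without proof, treating as evident that the solution set of $D(x_1,\ldots,x_p)+0\cdot x_{p+1}=0$ in $\Q^{p+1}$ is $V\times\Q$ (with $V$ the rational solution set of $D=0$), which is finite precisely when $V=\emptyset$ because $\Q$ is infinite. Nothing further is needed.
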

\begin{theorem}
If the set of all Diophantine equations which
have only finitely many rational solutions is recursively enumerable, then there exists an algorithm
which decides whether or not a given Diophantine equation has a rational solution.
\end{theorem}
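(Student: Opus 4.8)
The plan is to show that the set $S$ of Diophantine equations possessing a rational solution is recursive, by proving that both $S$ and its complement are recursively enumerable and then invoking Post's theorem; the resulting decision procedure for $S$ is precisely the asserted algorithm.

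First I would note that $S$ is recursively enumerable outright, with no hypothesis needed: an algorithm can dovetail over all rational tuples $(x_1,\ldots,x_p)$, say in order of increasing height, and over all input equations, emitting an equation as soon as one of its tuples is verified to be a zero. This is the standard semidecision procedure for solvability over $\Q$.

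Next I would reduce the complement of $S$ to the set $F$ of Diophantine equations having only finitely many rational solutions, using Lemma~\ref{xyz}. Given $D(x_1,\ldots,x_p)=0$, the map that produces $D(x_1,\ldots,x_p)+0 \cdot x_{p+1}=0$ is clearly effective. By Lemma~\ref{xyz}, the original equation has no rational solution if and only if the modified equation has only finitely many rational solutions, i.e.\ if and only if the modified equation belongs to $F$. Hence $D \mapsto D+0 \cdot x_{p+1}$ is a many-one reduction of the complement of $S$ to $F$. Under the hypothesis that $F$ is recursively enumerable, composing this reduction with an enumeration of $F$ exhibits the complement of $S$ as recursively enumerable.

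Finally, a set that is simultaneously recursively enumerable and co-recursively enumerable is recursive. Thus $S$ is recursive, and there is an algorithm which, given a Diophantine equation, decides whether it has a rational solution. I do not anticipate a genuine obstacle: the only points that merit any care are that the transformation underlying Lemma~\ref{xyz} is effective and that the semidecision procedure for $S$ and the enumeration of the complement of $S$ can be run in parallel, both of which are routine.
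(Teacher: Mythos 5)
Your proposal is correct and is essentially the paper's own argument: the paper also forms $W=D+0\cdot x_{p+1}$, invokes Lemma~\ref{xyz}, and dovetails a search for a rational solution of $D$ (via a computable surjection $\theta$ onto $\bigcup_{n}\Q^n$) against a search for $W$ in the assumed enumeration of the finitely-solvable equations. The only difference is presentational — you package the two semidecision procedures via Post's theorem, while the paper writes them out as a single parallel flowchart.
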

\begin{proof}
For a \mbox{non-negative} integer $n$, we define
\[
\theta(n)=\left\{
\begin{array}{lcl}
\eta(n+2) && {\rm (if~} n+2 \in \Gamma_3 {\rm )}\\
0 && {\rm (if~} n+2 \not\in \Gamma_3 {\rm )}
\end{array}
\right.
\]
where $\eta$ and $\Gamma_3$ were defined in the proof of Theorem~\ref{new2}.
The function \mbox{$\theta \colon \N \to \bigcup_{n=1}^\infty\limits {\Q}^n$}
is computable and surjective.
Suppose that \mbox{$\{{\cal A}_n=0\}_{n=0}^\infty$} is a computable sequence of all Diophantine
equations which have only finitely many rational solutions.
By Lemma~\ref{xyz}, the execution of Flowchart 5 decides whether or not a Diophantine
equation \mbox{$D(x_1,\ldots,x_p)=0$} has a rational solution.
\begin{center}
\includegraphics[width=\hsize]{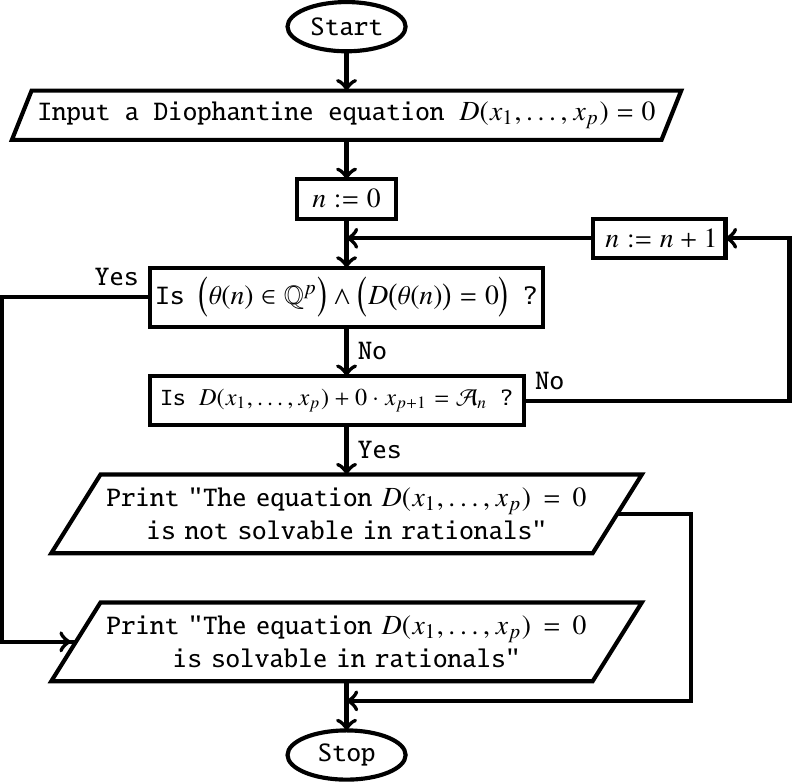}
\end{center}
\vskip 0.01truecm
\centerline{{\it Flowchart 5: An algorithm which decides the solvability of}}
\vskip 0.01truecm
\centerline{{\it a Diophantine equation \mbox{$D(x_1,\ldots,x_p)=0$} in rationals,}}
\vskip 0.01truecm
\centerline{{\it if the set of all Diophantine equations which have at most}}
\vskip 0.01truecm
\centerline{{\it finitely many rational solutions is recursively enumerable}}
\end{proof}
\begin{theorem}
A positive solution to Hilbert's Tenth Problem for $\Q$ implies that Friedman's conjecture
is false.
\end{theorem}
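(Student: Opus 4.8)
The plan is to derive a contradiction with Conjecture~\ref{con3} by showing that the hypothesis forces the set of all Diophantine equations which have only finitely many rational solutions to be recursively enumerable. Fix a Diophantine equation \mbox{$D(x_1,\ldots,x_p)=0$}. For a positive integer $N$, I would first express the statement ``\mbox{$D(x_1,\ldots,x_p)=0$} has at least $N$ distinct rational solutions'' as the solvability in $\Q$ of a single polynomial equation \mbox{$E_N=0$} that is computable from $D$ and $N$. To do this, introduce $N$ copies of the tuple \mbox{$(x_1,\ldots,x_p)$} together with one auxiliary variable for each pair of copies, and form the conjunction of the $N$ equations asserting that every copy solves \mbox{$D=0$} with the equations asserting, for each pair of copies, that the product of the corresponding auxiliary variable with the sum of the squares of the coordinatewise differences equals $1$. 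Over $\Q$ a sum of squares of rationals vanishes only when every summand vanishes, so the latter equations force the copies to be pairwise distinct; and a finite conjunction of polynomial equations \mbox{$P_1=0,\ldots,P_r=0$} over $\Q$ is equivalent to the single equation \mbox{$P_1^2+\cdots+P_r^2=0$}. This yields $E_N$.

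Next, using the assumed decision procedure for Hilbert's Tenth Problem for $\Q$, I would decide, for \mbox{$N=1,2,3,\ldots$} in turn, whether \mbox{$E_N=0$} is solvable in rationals, equivalently whether \mbox{$D=0$} has at least $N$ distinct rational solutions. If \mbox{$D=0$} has only finitely many rational solutions, say exactly $c$ of them, then \mbox{$E_{c+1}=0$} is unsolvable and this search halts; if \mbox{$D=0$} has infinitely many rational solutions, the search never halts. Dovetailing this procedure over all Diophantine equations therefore enumerates precisely those that have finitely many rational solutions, so that set is recursively enumerable, contradicting Conjecture~\ref{con3}. Hence Friedman's conjecture is false.

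The routine points are the standard reductions of conjunction and of non-vanishing to single polynomial equations over $\Q$ (available because $\Q$ is formally real) and the bookkeeping in the enumeration. The only place that needs care is the distinctness encoding: one must be sure that ``at least $N$ distinct solutions'' is genuinely captured, which is why the relation \mbox{$\neq$} between tuples is expressed through an auxiliary variable together with a sum of squares rather than through order relations; I do not expect a real obstacle here, only the need to stay within the language of polynomial equations.
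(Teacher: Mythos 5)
Your proposal is correct and follows essentially the same strategy as the paper: use the assumed decision procedure for Hilbert's Tenth Problem over $\Q$ to build an algorithm that halts exactly on the Diophantine equations with finitely many rational solutions, which makes that set recursively enumerable and so refutes Friedman's conjecture (the paper delegates this to Flowchart~6). Your write-up additionally spells out the standard encodings over $\Q$ (conjunction via sums of squares, distinctness via an auxiliary variable witnessing non-vanishing) that make the query ``does $D=0$ have at least $N$ distinct rational solutions?'' a single Diophantine solvability question, and these encodings are all valid because $\Q$ is formally real.
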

\begin{proof}
Assume a positive solution to Hilbert's Tenth Problem for $\Q$.
The algorithm presented in Flowchart 6 stops if and only if a Diophantine
equation \mbox{$D(x_1,\ldots,x_p)=0$} has at most finitely many rational
solutions.
\begin{center}
\includegraphics[width=\hsize]{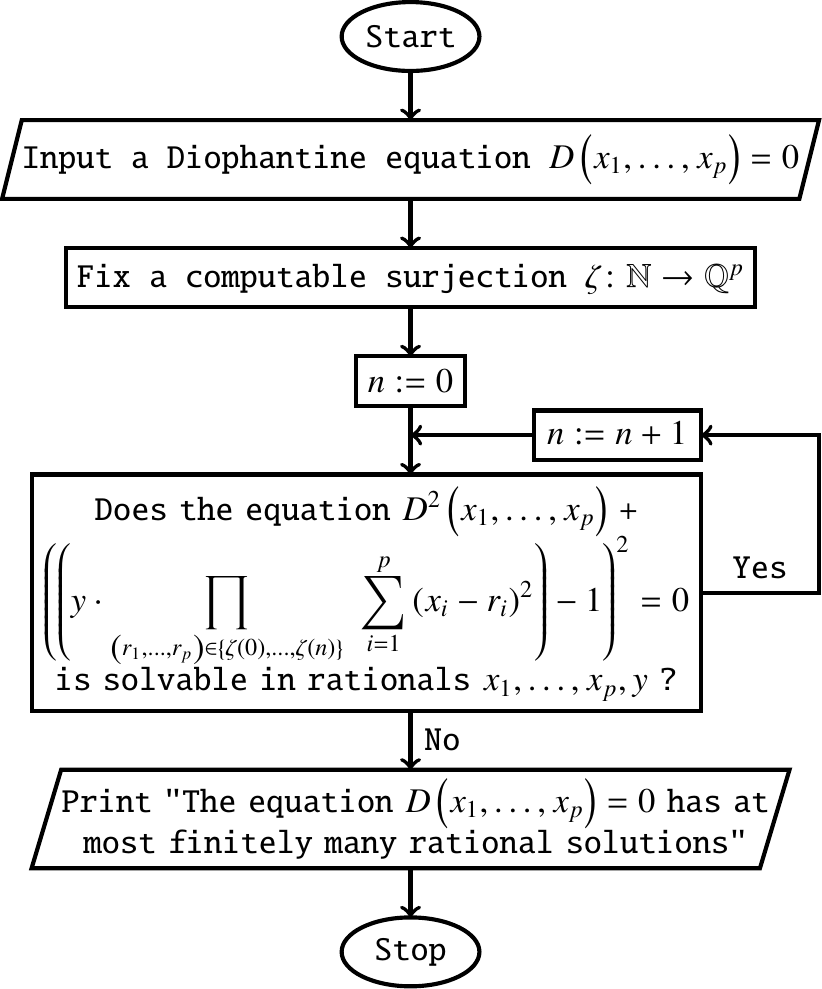}
\end{center}
\vskip 0.01truecm
\centerline{{\it Flowchart 6: A positive solution to Hilbert's Tenth}}
\vskip 0.01truecm
\centerline{{\it Problem for $\Q$ implies that Friedman's conjecture is false}}
\end{proof}
\section{{\bf A new proof that the set of all Diophantine equations
which have at most finitely many solutions in \mbox{non-negative} integers is not recursively enumerable}}
There is no algorithm to decide whether or not a given
Diophantine equation has an integer solution, see \cite{Matiyasevich}.
The set of all Diophantine equations which have at most finitely many solutions in
\mbox{non-negative} integers is not recursively enumerable, see
\mbox{\cite[p.~104,~Corollary~1]{Smorynski1}} and \mbox{\cite[p.~240]{Smorynski2}}.
\begin{lemma}\label{xyz1}
A Diophantine equation \mbox{$D(x_1,\ldots,x_p)=0$} has no solutions in \mbox{non-negative} integers \mbox{$x_1,\ldots,x_p$}
if and only if the equation \mbox{$D(x_1,\ldots,x_p)+0 \cdot x_{p+1}=0$} has at most finitely many
solutions in \mbox{non-negative} integers \mbox{$x_1,\ldots,x_{p+1}$}.
\end{lemma}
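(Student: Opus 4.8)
The plan is to mirror the argument behind Lemma~\ref{xyz}, replacing rationals by \mbox{non-negative} integers. The key observation is that the term \mbox{$0 \cdot x_{p+1}$} vanishes identically, so for all \mbox{$a_1,\ldots,a_{p+1} \in \N$} the tuple \mbox{$(a_1,\ldots,a_p,a_{p+1})$} satisfies \mbox{$D(x_1,\ldots,x_p)+0 \cdot x_{p+1}=0$} if and only if \mbox{$(a_1,\ldots,a_p)$} satisfies \mbox{$D(x_1,\ldots,x_p)=0$}.

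First I would treat the forward implication. If \mbox{$D(x_1,\ldots,x_p)=0$} has no solution in \mbox{non-negative} integers, then by the key observation \mbox{$D(x_1,\ldots,x_p)+0 \cdot x_{p+1}=0$} has no solution in \mbox{non-negative} integers either; in particular it has at most finitely many of them (indeed none).

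Next I would prove the converse by contraposition. Suppose \mbox{$D(x_1,\ldots,x_p)=0$} has a solution \mbox{$(a_1,\ldots,a_p) \in {\N}^p$}. Then, again by the key observation, \mbox{$(a_1,\ldots,a_p,b)$} solves \mbox{$D(x_1,\ldots,x_p)+0 \cdot x_{p+1}=0$} for every \mbox{$b \in \N$}. Since $\N$ is infinite, the augmented equation has infinitely many \mbox{non-negative} integer solutions, hence does not have at most finitely many of them.

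There is no real obstacle here: the statement is a purely formal bookkeeping lemma, identical in structure to Lemma~\ref{xyz}, and the only thing to verify is that padding the equation with the identically-zero summand \mbox{$0 \cdot x_{p+1}$} neither creates nor destroys solutions in \mbox{non-negative} integers while leaving the new variable \mbox{$x_{p+1}$} entirely free.
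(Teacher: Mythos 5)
Your proof is correct; the paper actually states Lemma~\ref{xyz1} without any proof, treating it as evident, and your argument (the solution set of the padded equation is the Cartesian product of the original solution set with $\N$, hence empty or infinite) is exactly the intended justification.
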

\begin{lemma}\label{xyz2}
A Diophantine equation \mbox{$D(x_1,\ldots,x_p)=0$} has no solutions in \mbox{non-negative} integers \mbox{$x_1,\ldots,x_p$}
if and only if the equation \mbox{$\left(2x_{p+1}+1\right) \cdot D(x_1,\ldots,x_p)=0$} has at most finitely many solutions
in \mbox{non-negative} integers \mbox{$x_1,\ldots,x_{p+1}$}. If the polynomial \mbox{$D(x_1,\ldots,x_p)$} depends
on all the variables \mbox{$x_1,\ldots,x_p$}, then the polynomial \mbox{$\left(2x_{p+1}+1\right) \cdot D(x_1,\ldots,x_p)$}
depends on all the varaiables \mbox{$x_1,\ldots,x_{p+1}$}.
\end{lemma}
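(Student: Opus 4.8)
The plan is to prove the biconditional by treating the two implications separately and then to dispose of the dependence claim by elementary degree bookkeeping in the polynomial ring $\Z[x_1,\ldots,x_{p+1}]$. The key observation driving everything is that $2x_{p+1}+1 \geqslant 1$ for every \mbox{non-negative} integer $x_{p+1}$, so the linear factor never vanishes on the domain of interest, and since $\Z$ is an integral domain it is never a zero divisor either.

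For the forward direction I would assume that $D(x_1,\ldots,x_p)=0$ has no solution in \mbox{non-negative} integers. If $(a_1,\ldots,a_p,a_{p+1})$ were a \mbox{non-negative} integer solution of $\left(2x_{p+1}+1\right) \cdot D(x_1,\ldots,x_p)=0$, then from $2a_{p+1}+1 \neq 0$ and integral-domain cancellation we would get $D(a_1,\ldots,a_p)=0$, contradicting the hypothesis. Hence the product equation has no \mbox{non-negative} integer solution at all, and in particular at most finitely many. For the converse I would argue by contraposition: if $D(x_1,\ldots,x_p)=0$ has a \mbox{non-negative} integer solution $(a_1,\ldots,a_p)$, then for every \mbox{non-negative} integer $t$ the tuple $(a_1,\ldots,a_p,t)$ satisfies $\left(2t+1\right) \cdot D(a_1,\ldots,a_p)=\left(2t+1\right) \cdot 0=0$, so the product equation has infinitely many \mbox{non-negative} integer solutions and therefore not at most finitely many. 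Combining the two directions yields the stated equivalence; along the way one has in fact shown the sharper dichotomy that $\left(2x_{p+1}+1\right) \cdot D(x_1,\ldots,x_p)=0$ has either zero or infinitely many \mbox{non-negative} integer solutions.

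For the dependence claim, suppose $D(x_1,\ldots,x_p)$ depends on all of $x_1,\ldots,x_p$; in particular $D$ is a \mbox{non-zero} element of $\Z[x_1,\ldots,x_{p+1}]$. Regarded as a polynomial in $x_{p+1}$, the product $\left(2x_{p+1}+1\right) \cdot D$ has $x_{p+1}$-degree exactly $1$ with leading coefficient $2D \neq 0$, so it depends on $x_{p+1}$. For each $i \in \{1,\ldots,p\}$, the factor $2x_{p+1}+1$ is a \mbox{non-zero} element of the integral domain $\Z[x_1,\ldots,x_{p+1}]$ that is free of $x_i$, hence ${\rm deg}\bigl(\left(2x_{p+1}+1\right) \cdot D, x_i\bigr)={\rm deg}(D,x_i) \geqslant 1$, so the product depends on $x_i$. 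This accounts for all $p+1$ variables.

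There is no real obstacle here; the only points that deserve an explicit sentence are that "at most finitely many" must be read as "exactly zero" (justified because one \mbox{non-negative} integer solution of $D=0$ spawns an infinite family by letting $x_{p+1}$ range), and that the cancellation of the \mbox{non-vanishing}, \mbox{non-zero-divisor} factor $2x_{p+1}+1$ is what makes both the equivalence and the degree computations go through.
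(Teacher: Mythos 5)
Your proof is correct, and it is exactly the evident argument the paper has in mind: the paper states Lemma~\ref{xyz2} without any proof, treating it as immediate. Your two observations --- that $2x_{p+1}+1\geqslant 1$ never vanishes on \mbox{non-negative} integers (so a solution of the product equation forces $D=0$, while a single solution of $D=0$ spawns infinitely many by letting $x_{p+1}$ range over $\N$), and that multiplying by the \mbox{non-zero} polynomial $2x_{p+1}+1$ in the integral domain $\Z[x_1,\ldots,x_{p+1}]$ preserves ${\rm deg}(\cdot,x_i)$ for $i\leqslant p$ and gives degree $1$ in $x_{p+1}$ --- supply precisely the details the paper leaves implicit, so nothing further is needed.
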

\begin{theorem}
If the set of all Diophantine equations which have at most finitely many solutions in \mbox{non-negative} integers is recursively enumerable,
then there exists an algorithm which decides whether or not a given Diophantine equation has a solution in \mbox{non-negative} integers.
\end{theorem}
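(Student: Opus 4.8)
The plan is to run two semi-decision procedures in parallel, exactly one of which is guaranteed to terminate, mirroring the argument behind Flowchart~5 but now in the \mbox{non-negative} integer domain. First I would recall that the set of Diophantine equations possessing a solution in \mbox{non-negative} integers is recursively enumerable: one simply searches through all tuples \mbox{$(x_1,\ldots,x_p) \in {\N}^p$} in, say, co-lexicographic order and halts upon finding a zero of $D$. (This is the easy direction of the Davis--Putnam--Robinson--Matiyasevich theorem.) Call this procedure $P_1$, applied directly to the input equation \mbox{$D(x_1,\ldots,x_p)=0$}.

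Next, by the hypothesis of the theorem there is a computable sequence \mbox{$\{{\cal B}_n=0\}_{n=0}^{\infty}$} listing exactly the Diophantine equations with at most finitely many solutions in \mbox{non-negative} integers. Let $P_2$ be the procedure that, on input \mbox{$D(x_1,\ldots,x_p)=0$}, forms the padded equation \mbox{$D(x_1,\ldots,x_p)+0 \cdot x_{p+1}=0$} and searches the list \mbox{$\{{\cal B}_n=0\}_{n=0}^{\infty}$} for an occurrence of it, halting when one is found. (One could equally use \mbox{$(2x_{p+1}+1) \cdot D(x_1,\ldots,x_p)=0$} as in Lemma~\ref{xyz2}.)

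The key observation is Lemma~\ref{xyz1}: the equation \mbox{$D(x_1,\ldots,x_p)+0 \cdot x_{p+1}=0$} has no solution in \mbox{non-negative} integers when \mbox{$D=0$} has none, and has infinitely many otherwise, since $x_{p+1}$ may be varied freely. Hence if \mbox{$D=0$} has a \mbox{non-negative} integer solution then $P_1$ halts, while if \mbox{$D=0$} has no such solution then the padded equation has at most finitely many (indeed zero) solutions, so $P_2$ halts. These two alternatives are exhaustive and mutually exclusive, so interleaving the steps of $P_1$ and $P_2$ yields an algorithm that always terminates and reports which procedure stopped --- that is, a decision procedure for solvability in \mbox{non-negative} integers. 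Drawing the flowchart that interleaves $P_1$ and $P_2$ constitutes the formal proof.

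I do not foresee a genuine obstacle here; the only point requiring care is the choice of padding construction so that Lemma~\ref{xyz1} (or Lemma~\ref{xyz2}) applies cleanly and the induced map on equations is computable, and --- if one prefers the variant of Lemma~\ref{xyz2} --- that the product \mbox{$(2x_{p+1}+1) \cdot D(x_1,\ldots,x_p)$} still depends on every variable, which is exactly what the last sentence of that lemma records.
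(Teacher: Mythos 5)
Your proposal is correct and is essentially the paper's own argument: Flowchart~7 dovetails exactly your two semi-decision procedures, using a single counter $i$ that simultaneously indexes the enumeration $\{{\cal S}_i=0\}$ (searching for the padded equation $D(x_1,\ldots,x_p)+0\cdot x_{p+1}=0$, justified by Lemma~\ref{xyz1}) and encodes a candidate tuple via the prime factorization of $i$. The only difference is presentational --- the paper realizes the interleaving through an explicit computable surjection from $\N\setminus\{0,1\}$ onto $\N^p$ rather than running two procedures in parallel.
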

\begin{proof}
Suppose that \mbox{$\{{\cal S}_i=0\}_{i=2}^\infty$} is a computable sequence of all Diophantine equations which have
at most finitely many solutions in \mbox{non-negative} integers. The algorithm presented in \mbox{Flowchart 7} uses a computable
surjection from \mbox{$\N \setminus \{0,1\}$} onto \mbox{${\N}^p$}. By this and \mbox{Lemma \ref{xyz1}},
the execution of \mbox{Flowchart 7} decides whether or not a Diophantine equation \mbox{$D(x_1,\ldots,x_p)=0$} has a solution
in \mbox{non-negative} integers.
\vskip 0.001truecm
\noindent
\begin{center}
\begin{tikzpicture}[very thick]
\tt
\node[inner sep=0pt] at (3.9,14.7) {Start};
\node at (5,13.5) {Input a Diophantine equation $D(x_{1}, \ldots, x_{p}) = 0$};
\node at (4.75,12.35) {$W(x_{1}, \ldots, x_{p+1}) := D(x_{1}, \ldots, x_{p}) + 0 \cdot x_{p+1}$};
\node at (3.9,11.2) {$i := 2$};
\node at (7.55,10.6) {$i := i + 1$};
\node at (3.45,9.85) {Is $W(x_{1}, \ldots, x_{p+1}) = {\cal S}_{i}$ ?};
\node[text width=7.2cm, text centered] at (4.75,8.05) {Compute prime numbers $B_{\textstyle 1}, \ldots, B_{\textstyle n}$ and positive integers $b_{\textstyle 1}, \ldots, b_{\textstyle n}$ such that $i = B_{\textstyle 1}^{\textstyle b_1} \ldots B_{\textstyle n}^{\textstyle b_n}$ and $B_{\textstyle 1} < \ldots < B_{\textstyle n}$};
\node at (3.9,6.4) {Is $p \leqslant n$ ?};
\node at (3.9,5.15) {Is $D(b_{1}-1, \ldots, b_{p}-1) = 0$ ?};
\node[text width=8cm, text centered] at (4.95,3.6) {Print "The equation $D(x_{1}, \ldots, x_{p}) = 0$ is solvable in non-negative integers"};
\node[text width=8.4cm, text centered] at (5,1.9) {Print "The equation $D(x_{1}, \ldots, x_{p}) = 0$ is not solvable in non-negative integers"};
\node[inner sep=0pt] at (3.9,.3) {Stop};
\draw (3.9,14.7) ellipse(8mm and 3mm);
\draw (0,13.2) -- (9.7,13.2) -- (10,13.9) -- (.3,13.9) -- cycle;
\draw (1,12) rectangle (8.5,12.7);
\draw (3.3,10.9) rectangle (4.5,11.5);
\draw (6.6,10.3) rectangle (8.5,10.9);
\draw (1,9.5) rectangle (5.9,10.2);
\draw (1,7.2) rectangle (8.5,8.9);
\draw (2.7,6.1) rectangle (5.1,6.7);
\draw (1,4.8) rectangle (6.8,5.5);
\draw (.5,3) -- (8.7,3) -- (9.2,4.2) -- (1,4.2) -- cycle;
\draw (.5,1.3) -- (9,1.3) -- (9.5,2.5) -- (1,2.5) -- cycle;
\draw (3.9,.3) ellipse(8mm and 3mm);
\draw[->] (3.9,14.4) -- (3.9,13.9);
\draw[->] (3.9,13.2) -- (3.9,12.7);
\draw[->] (3.9,12) -- (3.9,11.5);
\draw[->] (3.9,10.9) -- (3.9,10.2);
\draw[->] (3.9,9.5) -- (3.9,8.9) node[midway,right] {No};
\draw[->] (3.9,7.2) -- (3.9,6.7);
\draw[->] (3.9,6.1) -- (3.9,5.5) node[midway,right] {Yes};
\draw[->] (3.9,4.8) -- (3.9,4.2) node[midway,right] {Yes};
\draw[->] (3.9,1.3) -- (3.9,.6);
\draw[->] (1,9.85) -- (0,9.85) -- (0,1.9) -- (.75,1.9);
\node[above left] at (1,9.85) {Yes};
\draw[->] (5.1,6.4) -- (10,6.4);
\node[above right] at (5.1,6.4) {No};
\draw[->] (6.8,5.15) -- (10,5.15);
\node[above right] at (6.8,5.15) {No};
\draw[->] (10,5.13) -- (10,10.6) -- (8.5,10.6);
\draw[->] (6.6,10.6) -- (3.9,10.6);
\draw[->] (8.95,3.6) -- (10,3.6) -- (10,1) -- (3.9,1);
\end{tikzpicture}
\vskip 0.01truecm
\centerline{{\it Flowchart 7: An algorithm which decides the solvability of}}
\vskip 0.01truecm
\centerline{{\it a Diophantine equation \mbox{$D(x_1,\ldots,x_p)=0$} in \mbox{non-negative}}}
\vskip 0.01truecm
\centerline{{\it integers, if the set of all Diophantine equations which have}}
\vskip 0.01truecm
\centerline{{\it at most finitely many solutions in \mbox{non-negative} integers}}
\vskip 0.01truecm
\centerline{{\it is recursively enumerable}}
\end{center}
\end{proof}
\begin{corollary}
By Matiyasevich's theorem, the set of all Diophantine equations which have at most finitely many solutions in \mbox{non-negative}
integers is not recursively enumerable.
\end{corollary}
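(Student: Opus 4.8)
The plan is to read the Corollary as the contrapositive of the theorem stated immediately above it, with Matiyasevich's theorem supplying the base undecidability fact; no new combinatorics is needed, since everything has already been set up in the preceding theorem and its Flowchart~7.

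First I would record the implication just proved: if the set of all Diophantine equations having at most finitely many solutions in \mbox{non-negative} integers were recursively enumerable, then, fixing a computable enumeration \mbox{$\{{\cal S}_i=0\}_{i=2}^\infty$} of that set, the execution of Flowchart~7 would be an algorithm that, on input \mbox{$D(x_1,\ldots,x_p)=0$}, decides whether that equation has a solution in \mbox{non-negative} integers. Thus ``recursively enumerable'' for this finiteness set entails decidability of the existence problem over $\N$. Next I would invoke Matiyasevich's theorem in the form quoted at the opening of this section (\cite{Matiyasevich}): there is no algorithm deciding whether a given Diophantine equation has an integer solution. To line this up with the existence problem over $\N$, I would use the routine effective reduction: a polynomial $D$ has an integer zero if and only if the polynomial obtained from $D$ by replacing each $x_i$ with a difference of two new \mbox{non-negative} variables (equivalently, by a sum of four squares) has a \mbox{non-negative} integer zero; this transformation is primitive recursive and stays within the class of integer-coefficient polynomial equations. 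Hence the existence problem over $\N$ is undecidable as well.

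Combining the two facts, the assumption that the finiteness set is recursively enumerable yields a decision procedure for a problem known to be undecidable, a contradiction; therefore the set of all Diophantine equations which have at most finitely many solutions in \mbox{non-negative} integers is not recursively enumerable, which is exactly the assertion of the Corollary. The only point that calls for any care — and it is a minor bookkeeping point, not a genuine obstacle — is to phrase the integer versus \mbox{non-negative}-integer reduction for the same class of objects used throughout the paper, so that the undecidability transfers without loss of generality.
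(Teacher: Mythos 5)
Your proposal is correct and is exactly the argument the paper intends: the corollary is the contrapositive of the immediately preceding theorem (Flowchart~7) combined with the undecidability of Hilbert's Tenth Problem, which the paper invokes via \cite{Matiyasevich} at the start of that section. Your extra remark on reducing integer solvability to \mbox{non-negative}-integer solvability is a harmless refinement; the undecidability over $\N$ is standard and is what the paper implicitly uses.
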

\par
Let ${\cal M}$ denote the set of all Diophantine equations \mbox{$D(x_1,\ldots,x_p)=0$} such that
\mbox{$p \in \N \setminus \{0\}$} and the polynomial \mbox{$D(x_1,\ldots,x_p)$} depends on all the variables \mbox{$x_1,\ldots,x_p$}.
A similar reasoning with Lemma~\ref{xyz2} shows that
the set of all equations from ${\cal M}$ which have at most finitely
many solutions in \mbox{non-negative} integers is not recursively enumerable.
\section{Summary of the main theorems and conjectures}
Flowchart 8 provides an overview of the main theorems and conjectures.
\newpage
~
\newpage
\begin{center}
\includegraphics[scale=0.98]{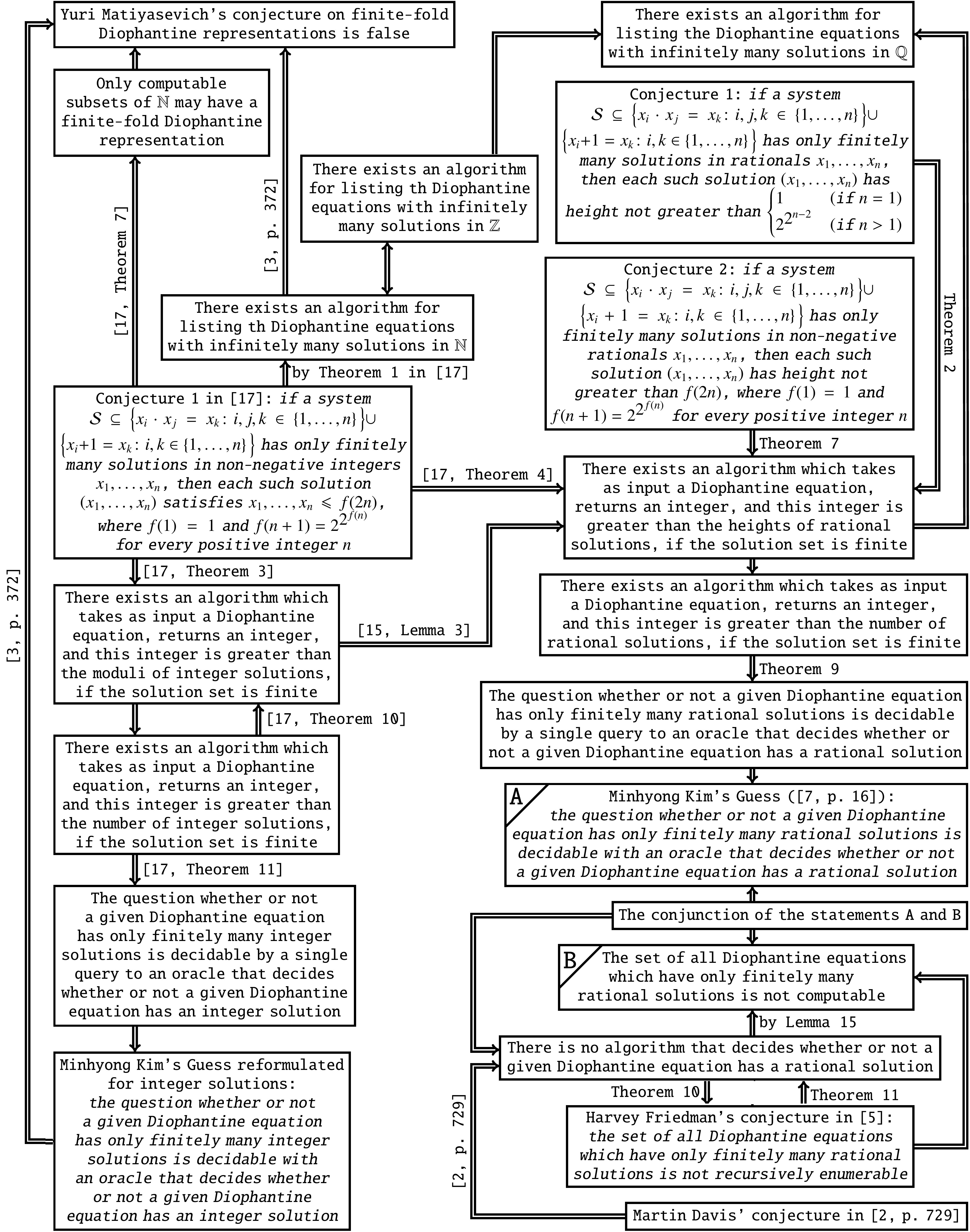}
\end{center}
\vskip 0.01truecm
\centerline{{\it Flowchart 8: Implications between conjectures}}
\newpage
~
\newpage

\balance
\end{document}